\documentclass{amsart}
\usepackage[utf8]{inputenc}
\usepackage{amsmath}
\usepackage{amssymb}
\usepackage{amsthm}
\usepackage{xcolor}
\usepackage{StandardNewCommands}
\usepackage{tikz}
\usepackage{mathtools}
\usetikzlibrary{cd}

\usepackage[backref=true,giveninits=true,doi=false,url=false,isbn=false,backend=biber]{biblatex}
    \newbibmacro{string+doiurlisbn}[1]{%
      \iffieldundef{doi}{%
        \iffieldundef{url}{%
          \iffieldundef{isbn}{%
            \iffieldundef{issn}{%
              #1%
            }{%
              \href{http://books.google.com/books?vid=ISSN\thefield{issn}}{#1}%
            }%
          }{%
            \href{http://books.google.com/books?vid=ISBN\thefield{isbn}}{#1}%
          }%
        }{%
          \href{\thefield{url}}{#1}%
        }%
      }{%
        \href{http://dx.doi.org/\thefield{doi}}{#1}%
      }%
    }
    \DeclareFieldFormat{title}{\usebibmacro{string+doiurlisbn}{\mkbibemph{#1}}}
    \DeclareFieldFormat[article,incollection,thesis,misc,inproceedings,online,inbook]{title}{\usebibmacro{string+doiurlisbn}{\mkbibquote{#1}}}
    \DeclareFieldFormat{year}{\usebibmacro{year-parenthesis}{\mkbibemph{#1}}}
\renewbibmacro*{date}{%
\iffieldundef{year}
{}
{\ifentrytype{misc}{\printtext[parens]{\printdate}}{\printdate}}}%
    \addbibresource{Bibliography.bib}
    
\usepackage{hyperref}
\hypersetup{colorlinks=true,allcolors=blue}
\usepackage[capitalize,nameinlink,noabbrev]{cleveref}

\newtheorem{theorem}{Theorem}[section]
\newtheorem{corollary}[theorem]{Corollary}
\newtheorem{lemma}[theorem]{Lemma}
\newtheorem{proposition}[theorem]{Proposition}

\theoremstyle{definition}
\newtheorem{definition}[theorem]{Definition}
\newtheorem*{acknowledgements}{Acknowledgements}
\newtheorem{remark}[theorem]{Remark}

\newtheorem{example}[theorem]{Example}

\DeclareMathOperator{\Ext}{{\rm Ext}}
\DeclareMathOperator{\Hom}{{\rm Hom}}
\DeclareMathOperator{\Spec}{{\rm Spec}}

\newcommand{\ZnZconst}[1]{\underline{\ZZ/{#1}\ZZ}}

\title[Semistable abelian varieties over $\QQ$ with bad reduction at 19 only]{Semistable abelian varieties over $\QQ$ with bad reduction at 19 only: an overview of the Fontaine--Schoof strategy}

\date{}

\author{Francesco Campagna}

\address{Université Clermont Auvergne - LMBP, UMR 6620 - CNRS, Campus des Cézeaux 3, place Vasarely 63178 Aubière cedex, France}
\email{francesco.campagna@uca.fr}
\urladdr{https://sites.google.com/view/francesco-campagna/}

\author{Pip Goodman}

\address{Departament de matem\`atiques i inform\`atica and Centre de recerca matem\`atica,
Universitat de Barcelona,
Gran via de les Corts Catalanes 585, 08007 Barcelona, Catalonia}
\email{pip.goodman@ub.edu}
\urladdr{https://pipgoodman.github.io/}

\begin{document}

\begin{abstract}
    In this paper we provide an overview of a strategy pioneered by Fontaine and heavily refined by Schoof to classify abelian varieties with prescribed bad reduction.
    Throughout the overview, we prove various non-trivial background results turning it into an introduction for readers unacquainted with this topic.

    With the overview completed, we provide explicit examples of the strategy in action.
    At first we give introductory examples, classifying semistable abelian varieties over $\QQ$ with bad reduction at exactly one of 3 or 5 up to isogeny over $\QQ$.
    We then move onto a harder example, proving the analogous result for 19, which is new.
\end{abstract}

\maketitle

\tableofcontents
\section{Introduction}
In 1985, Fontaine \cite{Fontaine_pas_de_variete_abelienne} proved that there are no abelian varieties with good reduction everywhere over $\QQ$.
Building upon his work, Brumer and Kramer \cite{Brumer_Kramer_non-existence_of_certain_semistable_ab_vars} proved the non-existence of semistable abelian varieties with good reduction outside of $N \in \{2,3,5,7\}$.
 Schoof \cite{Schoof_one_bad_prime} reproved Brumer and Kramer's result and extended it to the case $N=13$.
Moreover, via a substantial extension to Fontaine's method, he showed \cite{Schoof_one_bad_prime,Schoof_23, Schoof_15}  that for $N \in \{11,15,23\}$, any such abelian variety is isogenous to a power of $J_0(N)$, the Jacobian of the modular curve $X_0(N)$. 

These classification results were initially used by Khare and Wintenberger to prove Serre's modularity conjecture for low levels and weights \cite[Thm. 5.2, Thm. 5.6]{Khare_Wintenberger_initial_breakthrough}.
These results later served as crucial base cases for their full resolution of Serre's modularity conjecture \cite{Khare_Wintenberger_I,Khare_Wintenberger_II}.

Inspired by the techniques developed by Schoof, Brumer and Kramer were able to rule out the existence of certain semistable abelian surfaces with odd conductor.
This, coupled with the computation of paramodular cusp forms \cite{Poor_Yuen_paramodular_cusp_forms} by Poor and Yuen, lead them to their paramodularity conjecture \cite{Paramodularity_Conjecture}.

In this paper we give an introduction to Fontaine and Schoof's classification strategy.
Our goal here is to make this overview accessible to readers who have had a basic introduction to finite flat group schemes, as given, for example, in Tate's article \cite{Tate_FLT}.

As a means of illustrating their strategy, we reprove that there are no semistable abelian varieties with good reduction outside exactly one of 3 or 5, implying, in particular, Fontaine's theorem that there are no abelian varieties with  good reduction everywhere over $\QQ$.
In order to demonstrate more advanced theory, we prove:

\begin{theorem}[= Theorem \ref{thm:19_classification}] \label{thm:main_theorem}
    Any semistable abelian variety over $\mathbb{Q}$ that has good reduction outside of 19 is isogenous to a power of $J_0(19)$.
\end{theorem}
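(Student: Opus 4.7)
The plan is to follow the Fontaine--Schoof strategy summarised earlier, with $N=19$ the single bad prime. Let $A/\mathbb{Q}$ be a non-trivial semistable abelian variety with good reduction outside $19$; I want to pin down the $19$-adic Tate module of $A$ tightly enough that Faltings' isogeny theorem forces $A\sim J_0(19)^{\dim A}$ over $\mathbb{Q}$. The first step is to place every $A[19^n]$ in the category $\ue{C}$ of finite flat commutative group schemes over $\mathbb{Z}$ introduced earlier (semistability of $A$ at $19$ plus good reduction elsewhere give precisely the right local conditions), and to invoke Raynaud to conclude that the only simple objects of $\ue{C}$ killed by $19$ are $\mun{19}$ and $\ZnZconst{19}$, so that every such object admits a Jordan--Hölder filtration with these two simples as graded pieces.

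The heart of the proof is the computation of the four groups $\ExtC(X,Y)$ for $X,Y\in\{\mun{19},\ZnZconst{19}\}$. Guided by Schoof's analogous calculations for $N\in\{11,15,23\}$, the expected outcome is that $\ExtC(\mun{19},\mun{19})$, $\ExtC(\ZnZconst{19},\ZnZconst{19})$ and $\ExtC(\mun{19},\ZnZconst{19})$ all vanish, while $\ExtC(\ZnZconst{19},\mun{19})$ is one-dimensional over $\mathbb{F}_{19}$, the unique non-trivial class being represented by the extension $0\to\mun{19}\to J_0(19)[19]\to\ZnZconst{19}\to 0$ arising from the Tate-curve structure of the elliptic curve $J_0(19)$ (of conductor $19$) at $19$. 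These computations reduce, via the machinery of the preceding sections, to controlled questions in Galois cohomology of $\mathbb{Q}$ ramified at $\{19,\infty\}$ together with unit and class-group calculations in $\mathbb{Q}(\zeta_{19})$; the fact that the class number of $\mathbb{Q}(\zeta_{19})$ is $1$ is a crucial input for the vanishing statements.

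Given these Ext computations, an inductive argument on the Jordan--Hölder length of $A[19]$, balancing the multiplicities of $\mun{19}$ and $\ZnZconst{19}$ against each other via self-duality of $A[19]$ under the Weil pairing, shows that $A[19]\cong J_0(19)[19]^{\dim A}$ in $\ue{C}$. Applying the same Ext control inductively to lift this to all $A[19^n]$ yields an isomorphism of Galois modules $T_{19}A\cong T_{19}(J_0(19))^{\dim A}$, whereupon Faltings' theorem produces the desired $\mathbb{Q}$-isogeny. (Note that the case in which $A$ has good reduction even at $19$ is absorbed, via Fontaine's original theorem, into the statement $\dim A = 0$.)

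The principal obstacle is the upper bound $\dim_{\mathbb{F}_{19}}\ExtC(\ZnZconst{19},\mun{19})\le 1$: the lower bound is automatic from $J_0(19)$, but the matching upper bound requires showing that the semistability constraint at $19$ cuts out exactly a one-dimensional subspace of a global Kummer-type cohomology group, a computation whose intricacy grows with $N$ throughout the Schoof programme. Assembling the various Ext contributions into the precise structure of $A[19]$, with the Weil pairing determining which cross-extensions are admissible, is a secondary but still delicate step.
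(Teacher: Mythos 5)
Your high-level skeleton (classify simples, compute Ext groups, reassemble the divisible group, finish with Faltings) matches the paper's, but the choice to run it at the prime $19$ itself is a fatal error, not a stylistic difference. Since $A$ has \emph{bad} reduction at $19$, the group schemes $A[19^n]$ do not extend to finite flat group schemes over $\mathbb{Z}$ or over $\mathbb{Z}_{19}$ (Proposition \ref{prop:torsion_is_finite_flat} explicitly requires $n$ coprime to the bad conductor $N$); over $\mathbb{Z}[\frac{1}{19}]$ they are finite flat but automatically étale, so finite flatness gives no control of ramification at $19$ and Fontaine's Theorem \ref{thm:Fontaine} is simply unavailable there. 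Moreover, even in a setting where $\ell$-torsion at a good prime $\ell$ is studied, the root-discriminant bound $p^{\frac{\ell-1}{\ell}}\ell^{\frac{\ell}{\ell-1}}$ of Corollary \ref{cor:bound_root_discriminant} must fall below the Odlyzko--Poitou threshold ($\approx 21.78$), which for $p=19$ forces $\ell=2$ (giving $4\sqrt{19}\approx 17.43$); with $\ell=19$ the bound exceeds $22$ already from the factor $19^{19/18}$, so the classification of simple objects cannot get off the ground. This is exactly why the paper works in $\mathcal{C}_{19,2}$.

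The substantive mathematical content is correspondingly different from what you predict. The simple objects of $\mathcal{C}_{19,2}$ are not just $\underline{\ZZ/2\ZZ}$ and $\mu_2$: there is a third simple object $E=X_0(19)[2]$, irreducible with $\Gal(\QQ(E)/\QQ)\cong S_3$, and the entire proof turns on showing (via $\Ext^1_{\mathcal{C}_{19}}(E,\underline{\ZZ/2\ZZ})=0$ and Proposition \ref{prop:filtration_has_no_Z/lZ_nor_mu_l}) that $A[2^n]$ is filtered by copies of $E$ alone, then that $\Ext^1_{\mathcal{C}_{19}}(E,E)\cong\FF_2$ is generated by $X_0(19)[4]$, which feeds Schoof's classification theorem for the resulting $2$-divisible groups. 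Your proposed global extension $0\to\mun{19}\to J_0(19)[19]\to\ZnZconst{19}\to 0$ does not exist: the Tate-curve structure gives only a \emph{local} filtration at $19$, and the mod-$19$ representation of the conductor-$19$ elliptic curve $X_0(19)$ is irreducible (the Eisenstein prime here is $3$, not $19$). So while the final appeal to Faltings is correct, the route you describe cannot be repaired by more careful Ext computations; it must be rerouted through $2$-power torsion.
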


We begin by recalling a few basic definitions and results about finite flat group schemes in Section \ref{section:preliminaries}.
With this completed we progress to our overview of Fontaine and Schoof's strategy in Section \ref{Section:FS_strategy}.
Here we give detailed arguments for results, presumably well-known to experts, which are unfortunately not easily accessible in the literature.
This does however force us a little outside of Tate's article in Section \ref{subsection:extensions}, where we need to assume a little background about sheaves on Grothendieck topologies, which one can find, for example, in Tamme's book \cite{Tamme_etale_cohomology}.
Readers learning this material, or about finite flat group schemes, for the first time may also wish to consult \cite{Waterhouse_group_schemes_book,Shatz_grp_schms,Tate_p-divisible_groups,Raynaud_quotient,Demazure_Gabriel,SGA3_tome_I}.

With the overview finished, in Section \ref{sec:FS_examples} we prove that there are no semistable abelian varieties with good reduction outside exactly one of $3$ or $5$.
This section is signposted with steps occurring in the Fontaine--Schoof strategy, allowing it to be read alongside Section \ref{Section:FS_strategy}.

The reader who only wishes to read the material necessary for these introductory examples may take a slightly faster route through Section \ref{Section:FS_strategy}.
Namely,
Subsection \ref{subsec:reduction_to_Tate_modules},
Subsection \ref{sec:strategy_generic_fibre},
Proposition \ref{prop:grp_schms_order_2} from Subsection \ref{sec:strategy_prolongations},
Subsection \ref{subsection:extensions} up to Corollary \ref{Cor:if_n_kills_the_base_change_then_it_kills_you_too} and up to Proposition \ref{prop:filtration_has_no_Z/lZ_nor_mu_l} in Subsection \ref{subsec:classification_Tate_modules}.

Section \ref{sec:19} is dedicated to proving Theorem \ref{thm:main_theorem} and requires the full force of the techniques developed in Section \ref{Section:FS_strategy}.
In Section \ref{sec:simple_objects_of_order_4}, we expand upon a remark of Schoof \cite[pg. 861]{Schoof_one_bad_prime} analysing certain simple objects of order 4 in $\mathcal{C}_{p}$.
This section may be viewed as a natural follow-up to Subsubsection \ref{sec:group_schemes_prime_order}.
The proof of Proposition \ref{prop:PGR_2-torsion_can_give_objects_in_C_p} and its corollaries have few prerequisites, namely no more than what is covered in Subsection \ref{subsec:reduction_to_Tate_modules}.
Likewise, Proposition \ref{prop:family_of_simple_grp_schemes_order_4_existence} and its proof may be read after the reader has encountered Proposition \ref{prop:equivalence_categories_prolongations}. 

 The interested reader looking to learn more about abelian varieties with prescribed bad reduction over $\QQ$ may wish to consult the papers of Mestre \cite{Mestre_formules_explicites} and Calegari \cite{Calegari_semistable_ab_vars}.
Wherein conditional non-existence results for certain abelian varieties defined over $\QQ$ with good reduction at primes greater than 10 are obtained.

Finally, we mention that in a forthcoming article, the authors of this paper prove the analogue of Theorem \ref{thm:main_theorem} for $N=29$.

\begin{acknowledgements}
The authors express their gratitude to René Schoof for answering many questions related to this topic and for giving inspiring talks on the subject.
The authors extend their thanks to Remy van Dobben de Bruin and Jakob Stix for helpful conversations and the anonymous referees for their valuable feedback and suggestions.

This work started whilst the authors were postdoctoral researchers at the Max-Planck-Institut für Mathematik in Bonn.
The authors thank all the institutions which hosted them whilst they carried out work on this project: Max-Planck-Institut für Mathematik, Universität Hannover, Universität Bayreuth, Université Clermont Auvergne, Universitat de Barcelona.

The authors collectively received funding from CNRS under the PEPS JCJC program 2024 and 2025, the ANR-20-CE40-0003 Jinvariant, the Deutsche Forschungsgemeinschaft (DFG) project grant STO 299/18-2 (AOBJ: 686837) and by the Spanish Ministry of Science and Innovation via the grant “Abelian varieties, L-functions, and rational points” (code PID2022-137605NB-I00).
\end{acknowledgements}
\section{Preliminary results on finite flat group schemes}
\label{section:preliminaries}

We begin by recalling a few basic definitions, as can be found in \cite{Tate_FLT}.
Let $S$ be a scheme and $(\mathrm{Sch}/S)$ be the category of schemes over $S$. We will call any object of this category an \textit{$S$-scheme}, and in the case $S=\mathrm{Spec}(R)$ is affine, we will also talk of $R$-schemes (or schemes over $R$). 

\begin{definition}
    A group scheme $G$ over $S$ is a commutative group object in $(\mathrm{Sch}/S)$.
    That is, $G$ is an object in $(\mathrm{Sch}/S)$ with a morphism $G \times G \rightarrow G$ such that for all objects $T$ in $(\mathrm{Sch}/S)$ the induced morphism $G(T) \times G(T) \rightarrow G(T)$ gives $G(T)$ the structure of a commutative group.
\end{definition}

In particular, all group schemes are assumed to be commutative.

\begin{definition}
    A homomorphism of group schemes over $S$ is a homomorphism of group objects in $(\mathrm{Sch}/S)$, i.e., a morphism $\varphi \colon G \rightarrow G'$ of $S$-schemes such that for any $T \in (\mathrm{Sch}/S)$, the induced map $G(T) \rightarrow G'(T)$ given by $g \mapsto \varphi \circ g$ is a homomorphism of groups.
\end{definition}

\begin{definition}
    A finite flat group scheme $G$ over $S$ is a group scheme over $S$ such that $G \rightarrow S$ is finite and flat.
\end{definition}

From now on we assume $S$ is a locally noetherian scheme and $G$ is a finite flat group scheme over $S$.
This implies $G \rightarrow S$ is finite locally free \cite[Prop. 12.19]{Goertz_Wedhorn_book}.
In particular, we can speak of the order of $G$, that is, the degree of $G \rightarrow S$ as defined in \cite[pg. 333]{Goertz_Wedhorn_book}.
The order of $G$ is a locally constant function $S \rightarrow \ZZ_{\geq 0}$.
Hence in the case $S$ is connected, the order of $G$ is a constant.
If $S$ is connected, then given a prime number $\ell$, we say that $G$ is an \textit{$\ell$-group scheme} if its order is a power of $\ell$.

\begin{definition}\label{def:exact_sequence_group_schemes}
    Let $F,G$ and $H$ be finite flat group schemes over $S$.
    The sequence of group scheme homomorphisms 
    \[
    0\to F \xrightarrow[]{\iota} G \xrightarrow[]{p} H \to 0
    \]
    is said to be \textit{exact} if the following holds true:
    \begin{enumerate}
        \item $F = \ker(p)$, where the kernel is meant in the sense of category theory;
        \item the morphism $p$ is faithfully flat.
    \end{enumerate}
\end{definition}

\begin{remark}
\label{remark:sheaf_injectivity_same_as_closed_embedding}
    The statement (1) in the definition above is equivalent to requiring that 
\[
0\to F(A) \xrightarrow[]{\iota_A} G(A) \xrightarrow[]{p_A} H(A)
\]
is left exact for every $S$-scheme $A$.

We also note that $F \rightarrow G$ is a closed immersion.
Indeed, since $H \rightarrow S$ is finite, it is separated \cite[pg. 586]{Goertz_Wedhorn_book}, thus the identity section $S \rightarrow H$ is a closed immersion \cite[\href{https://stacks.math.columbia.edu/tag/045W}{Tag 045W}]{stacks-project}.
As base change preserves the property of being a closed immersion \cite[pg. 583]{Goertz_Wedhorn_book}, we find $\ker(p) \rightarrow G$ is a closed immersion.
\end{remark}

Recall that finite morphisms are affine by definition \cite[Definition 12.9]{Goertz_Wedhorn_book}.
In particular, any finite flat group scheme $G$ over an affine base is affine.

\begin{lemma} \label{lem:correspondence_submodules_subschmes}
Let $R$ be a Dedekind domain with field of fractions $K$ of characteristic 0, and let $G$ be a finite flat group scheme over $R$.
Then the map that sends each closed finite flat subgroup scheme $H \subseteq G$ over $R$ to the $\mathrm{Gal}(\overline{K}/K)$-module $H(\overline{K})$, induces a one-to-one correspondence
\[\{\text{Closed finite flat subgroup schemes of $G$}\} \leftrightarrow \{ \mathrm{Gal}(\overline{K}/K) -\text{submodules of } G(\overline{K}) \}.\]
Moreover the order of $H$ equals the cardinality of $H(\overline{K})$.
\end{lemma}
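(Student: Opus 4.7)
The plan is to factor the correspondence through the generic fiber. To each closed finite flat subgroup scheme $H \subseteq G$ I assign its generic fiber $H_K \subseteq G_K$; since $\mathrm{char}(K)=0$, Cartier's theorem guarantees that every finite group scheme over $K$ is étale, so the usual equivalence between finite étale group schemes over $K$ and finite continuous $\mathrm{Gal}(\overline{K}/K)$-modules identifies closed subgroup schemes of $G_K$ with Galois-submodules of $G_K(\overline{K})=G(\overline{K})$. The lemma therefore reduces to showing that taking the generic fiber induces a bijection from closed finite flat subgroup schemes of $G$ over $R$ onto closed subgroup schemes of $G_K$ over $K$.

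The main tool is scheme-theoretic closure. Since $G\to\Spec R$ is finite, $G=\Spec B$ is affine; any closed subscheme of $G_K$ has the form $H_K=\Spec(B_K/J)$ with $B_K:=B\otimes_R K$. Setting $I:=J\cap B$ (through the inclusion $B\hookrightarrow B_K$ provided by flatness) and $H:=\Spec(B/I)$, the quotient $B/I$ embeds into $B_K/J$, hence is $R$-torsion-free, hence $R$-flat because $R$ is Dedekind; a direct check shows $(B/I)\otimes_R K=B_K/J$, so the generic fiber of $H$ is $H_K$. Conversely, any $R$-flat closed subscheme $H'=\Spec(B/I')\subseteq G$ with generic fiber $H_K$ satisfies $I'=J\cap B=I$ by the same torsion-free argument, so $H'=H$. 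This gives the bijection between $R$-flat closed subschemes of $G$ and closed subschemes of $G_K$.

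It remains to prove that subgroup schemes correspond to subgroup schemes. The only nontrivial direction is: if $H_K$ is a subgroup scheme of $G_K$, then $H$ is a subgroup scheme of $G$. For this I would use the principle that whenever $X=\Spec A$ is flat over $R$ (so that $A\hookrightarrow A_K$) and $f\colon B\to A$ satisfies $f_K(J)=0$, one also has $f(I)=0$; indeed, for $x\in I\subseteq J$ the image $f(x)\in A$ maps to $f_K(x)=0$ in $A_K$, and injectivity forces $f(x)=0$. Applying this to the multiplication $H\times_R H\to G$, the inverse $H\to G$, and the identity section $\Spec R\to G$—all sources being flat over $R$ since tensor products of flat modules are flat—one obtains that $H$ is a closed subgroup scheme of $G$. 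It is then finite flat over $R$, being a closed subscheme of the finite $R$-scheme $G$ and $R$-flat by construction.

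The main obstacle is this last compatibility check with the group-scheme structure; the rest is formal bookkeeping with ideals over a Dedekind base, in which the decisive input is that flat closed subschemes of $G$ are rigidly determined by their generic fiber.
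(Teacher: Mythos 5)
Your proposal is correct and follows essentially the same route as the paper: reduce to the generic fibre via Cartier's theorem and the \'etale/Galois-module equivalence, set up the bijection between flat closed subschemes of $G$ and closed subschemes of $G_K$ through the ideal contraction $J\mapsto J\cap B$ using that torsion-free equals flat over a Dedekind domain, and transfer the subgroup-scheme condition by exploiting flatness (hence injectivity into the base change to $K$) of the relevant tensor products. The paper phrases this last step in terms of Hopf ideals and the comultiplication rather than the morphisms $H\times_R H\to G$, but the underlying argument is identical.
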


\begin{proof}
Write $G= \Spec(A)$, where the Hopf algebra $A$ is regarded as an $R$-module via the map induced by the structural morphism $G\to \mathrm{Spec}(R)$.
Then the base-change of $G$ to $K$ is $G_K \coloneqq \Spec(A_K)$ where $A_K=A \otimes_R K$ inherits a natural Hopf algebra structure from that of $A$. Since $A$ is flat over $R$, we have a natural injection $A\hookrightarrow A_K$ and we identify $A$ with its image under this map.

We begin by showing that the set $\mathcal{J}$ of ideals $J$ of $A$ such that the quotient $A/J$ is flat over $R$ is in bijection with the set $\mathcal{I}$ of ideals of $A_K$. Consider the maps $\varphi\colon \mathcal{J} \to \mathcal{I}$ and $\psi\colon \mathcal{I} \to \mathcal{J}$ defined by
\[
\varphi\colon J\mapsto J \otimes_R K, \hspace{0.5cm} \psi: I\mapsto I \cap  A.
\]

The map $\psi$ is well-defined, \textit{i.e.}, for every ideal $I\subseteq A_K$ the quotient ring $A/(I \cap A)$ is flat over $R$.
Indeed, by \cite[Cor. 1.2.14, pg. 11]{Liu_book} and the fact that $R$ is a Dedekind domain, this is equivalent to $0$ being the only torsion element of $A/(I \cap A)$.
Let $a \in A$ be a lift of a torsion element.
Then there exists $r \in R\setminus\{0\}$ such that $ra \in I \cap A \subseteq I$.
Moreover, as $I$ is an $A \otimes_R K$ ideal, we have $a \in I$ and thus $a \in I \cap A$.
Hence $A/(I \cap A)$ is torsion-free and $\psi$ is well-defined.

We now claim that the maps $\varphi$ and $\psi$ are mutually inverses of one another.
Let $J$ be an ideal of $A$ such that $A/J$ is flat.
It is clear that $(J \otimes_R K) \cap A  \supseteq J$. Conversely, let $x=\sum_{i=1}^n a_i \otimes k_i \in (J \otimes_R K) \cap A$, where $a_i\in J$ and $k_i\in K$ for all $i=1,...,n$. Since $R$ is a Dedekind domain, there exists $r\in R\setminus \{0\}$ such that $rk_i\in R$ for all $i=1,...,n$. Then we have
\[
r\cdot x= \sum_{i=1}^n a_i \otimes r k_i = \sum_{i=1}^n (r k_i) a_i \otimes 1 \in J
\]
since $J$ is an ideal of $A$. As above, flatness of the module $A/J$ implies it is torsion-free, and we deduce that $x \in J$. This shows that $(J \otimes_R K) \cap A=J$.

Now let $I$ be an ideal of $A_K$. It is clear that $(I \cap A) \otimes_R K \subseteq I$. Conversely, let $x=\sum_{i=1}^n a_i \otimes k_i \in I \subseteq A_K$, where $a_i\in A$ and $k_i\in K$ for all $i=1,...,n$. As before, choose $r\in R\setminus \{0\}$ such that $rk_i\in R$ for all $i=1,...,n$. Then we have
\[
r\cdot x=\sum_{i=1}^n a_i \otimes r k_i= \sum_{i=1}^n (r k_i) a_i \otimes 1 \in I \cap A
\]
since $I$ is an ideal. Hence we obtain
\[
x=\sum_{i=1}^n (r k_i) a_i \otimes r^{-1} \in (I \cap A) \otimes_R K
\]
and this shows that $(I \cap A) \otimes_R K = I$. In particular, the maps $\varphi$ and $\psi$ are mutual inverses, and this proves our claim. 

It is now not very difficult to see that $\varphi$ and $\psi$ induce a bijection between Hopf ideals of $A$ with flat quotient and Hopf ideals of $A_K$ (see \cite[pg. 14]{Waterhouse_group_schemes_book} for the definition of a Hopf ideal).
To give an idea of how this can be proved, let us show that if $I$ is a Hopf ideal of $A_K$ then $J\coloneqq \psi(I) = I \cap A$ is in the kernel of the map
\[
(\Delta \ \mathrm{mod} \ J) \colon  A\xrightarrow[]{\Delta} A\otimes_R A \xrightarrow[]{} A/J \otimes_R A/J
\]
where $\Delta$ is the co-multiplication of $A$ and the second arrow is induced by the natural projection modulo $J$.
We shall leave the rest of the verifications to the reader.

Since $\varphi$ is inverse to $\psi$, we have $I=J\otimes_R K$ and by hypothesis $I$ is in the kernel of 
\[
(\Delta \ \mathrm{mod} \ J) \otimes \mathrm{id}_K\colon  A_K \to (A_K/I) \otimes_K (A_K/I) \cong (A/J \otimes_R A/J) \otimes_R K.
\]
In particular, for all $a\in J$ we have $(\Delta(a) \text{ mod } J)\otimes 1=0$.
Since $A/J \otimes _R A/J$ is a flat $R$-module, we deduce that $(\Delta(a) \text{ mod } J)=0$ for all $a\in J$ and this is what we wanted to prove.

The bijective correspondence given by $\varphi$ and $\psi$ described above shows, after taking ring spectra, that there is a bijection between finite flat closed subgroup schemes of $G$ and finite (flat) closed subgroup schemes of $G_K$.
Since $K$ has characteristic 0, the group scheme $G_K$ is étale over $K$ \cite[pg. 138 (II)]{Tate_FLT} so in turn its closed subgroup schemes correspond bijectively to the $\mathrm{Gal}(\overline{K}/K)$-submodules of $G_K(\overline{K})=G(\overline{K})$ by \cite[pg. 137]{Tate_FLT}. This completes the proof of the first statement.

The second statement about orders follows from the following more general discussion: if $H=\mathrm{Spec}(B)$ is a finite flat group scheme over $R$ (which being an integral domain is thus connected), then $B$ is a locally free $R$-algebra of constant rank $n$, hence for every prime ideal $\mathfrak{p}$ of $ R$ we have an $R_\mathfrak{p}$-module isomorphism $B_\mathfrak{p} \cong R_\mathfrak{p}^n$.
Taking $\mathfrak{p}=(0)$ we obtain a $K$-vector space isomorphism $B_{(0)} \cong B\otimes_R K \cong K^n$, showing $H$ and $H_K$ have the same order.
Now, as before $H_K$ is étale over $K$ since the latter has characteristic 0.
Its order then equals $H(\overline{K})$ by \cite[beginning of pg. 137]{Tate_FLT}.
This concludes the proof of the lemma.
\end{proof}

The above result does not hold in positive characteristic.
For example $\mu_p/\FF_p$ is a non-trivial group scheme of order $p$, but $\mu_p(L)$ has a unique point for any field extension $L$ of $\FF_p$.

\begin{example}
\label{example:closedbutnotflatsubgroupscheme}
Let us give an example of a closed subgroup scheme which is not flat.
Let $R = \ZZ[\frac{1}{N}]$ for $N$ an odd integer.
The group schemes $ \mu_2$ and $ \underline{\ZZ/2\ZZ} $ considered over $R$ are determined by the Hopf algebras $B \coloneqq R[x]/\langle x^2-1 \rangle $ and $A \coloneqq Re_0 \oplus Re_1$  respectively, where $e_0$ and $e_1$ are orthogonal idempotents.
Let $\varphi$ denote the non-trivial homomorphism $\underline{\ZZ/2\ZZ} \rightarrow \mu_2$ induced by the Hopf algebra homomorphism $B \rightarrow A$ given by extending $x \mapsto e_0-e_1$ to a homomorphism of $R$-algebras.
The kernel of $\varphi$ is represented by $A / 2e_1A$ which is not flat since it has non-trivial $R$-torsion.
On the other hand, the kernel of $\varphi$ is a closed subgroup scheme of $\underline{\ZZ/2\ZZ}$.
\end{example}

\begin{remark}
\label{remark:not_abelian}
    The above example shows that the full subcategory of the category of group schemes consisting of finite flat group schemes of 2 power order is not abelian.
\end{remark}

\section{The Fontaine--Schoof classification strategy}
\label{Section:FS_strategy}
The aim of this section is to describe the strategy of the proof of Theorem \ref{thm:main_theorem}. The proof is divided into several steps, each addressed in a separate subsection. At each step, besides indicating the pattern of the proof, we state and prove some basic results that are relevant for the computations involved in the step itself. The results that we decided to prove in the various subsections are certainly well-known to experts, but we either could not find an adequate reference for them, or we decided that they were of significant importance to be included for the convenience of the reader. 

Some conventions and notation that will be used throughout the paper: we fix once and for all an algebraic closure $\overline{\mathbb{Q}}$ of the rational field $\mathbb{Q}$ and we set $\Gamma_\mathbb{Q}\coloneqq\mathrm{Gal}(\overline{\mathbb{Q}}/\mathbb{Q})$.
For a finite flat group scheme $G$, defined over a subring of $\QQ$, we write $\QQ(G)$ to denote the field cut out by the action of $\Gamma_\QQ$ on $G(\overline{\QQ})$.
If $K,L$ are subfields of $\overline{\mathbb{Q}}$ we denote by $KL$ their compositum.
For $S$-schemes $X\to S$ and $T\to S$, we denote by $X_T\coloneqq X\times_S T$ the base change of $X$ along $T\to S$.
When $S$ and $T$ are affine, we use the corresponding rings (whose spectra are $S$ and $T$) in the notation.

\subsection{Reduction to the classification of Tate modules}
\label{subsec:reduction_to_Tate_modules}

The following result is fundamental to the Fontaine--Schoof strategy.
\begin{proposition}
\label{prop:torsion_is_finite_flat}
    Let $A/\QQ$ be an abelian variety with good reduction outside of an integer $N$ and let $\mathcal{A}$ be its Néron model over $\mathbb{Z}$.
    
    Then for any integer $n$, coprime to $N$, the $n$-torsion group scheme $\mathcal{A}[n]$ restricts to a finite flat group scheme over $\ZZ[\frac{1}{N}]$ which becomes étale over $\ZZ[\frac{1}{nN}]$.
\end{proposition}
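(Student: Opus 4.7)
The plan is to reduce everything to two standard inputs: first, that good reduction at a prime $p$ is equivalent to the Néron model being an abelian scheme over $\ZZ_{(p)}$; and second, that on an abelian scheme, multiplication by $n$ is a finite locally free morphism, and is étale whenever $n$ is invertible on the base.

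First I would show that $\mathcal{A}|_{\ZZ[\frac{1}{N}]}$ is an abelian scheme. By hypothesis $A$ has good reduction at every prime $p\nmid N$, which, via the Néron mapping property, is the same as saying that $\mathcal{A}_{\ZZ_{(p)}}$ is proper and smooth with geometrically connected fibres of dimension $g = \dim A$, i.e.\ an abelian scheme. Since each of these properties is local on the base, they glue to show that $\mathcal{A}|_{\ZZ[\frac{1}{N}]}$ is an abelian scheme of relative dimension $g$.

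Next I would invoke the standard fact that on any abelian scheme $X\to S$ of relative dimension $g$, the morphism $[n]\colon X \to X$ is finite locally free of degree $n^{2g}$. Applied to $X = \mathcal{A}|_{\ZZ[\frac{1}{N}]}$, this immediately yields that $\mathcal{A}[n] = \ker([n])$ is finite locally free, hence finite flat, over $\ZZ[\frac{1}{N}]$.

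Finally, for étaleness over $\ZZ[\frac{1}{nN}]$, I would apply the fibrewise criterion for a finite flat morphism: it suffices to check that $\mathcal{A}[n]$ is étale on each fibre over $\Spec \ZZ[\frac{1}{nN}]$. Over the generic point $\QQ$, étaleness is automatic in characteristic zero. Over a prime $p\nmid nN$, the fibre is the $n$-torsion of the abelian variety $\mathcal{A}_{\mathbb{F}_p}$ in a characteristic coprime to $n$, where $[n]$ is a separable isogeny and its kernel is therefore étale. This gives étaleness fibrewise, hence globally over $\ZZ[\frac{1}{nN}]$. There is no real obstacle in the argument; the only point demanding a little care is the identification in the first step between good reduction and the Néron model being an abelian scheme, which is a consequence of the universal property of Néron models together with uniqueness of the smooth proper model when it exists.
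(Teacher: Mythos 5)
Your proposal is correct and follows essentially the same route as the paper: restrict the Néron model to an abelian scheme over $\ZZ[\frac{1}{N}]$, deduce that $\mathcal{A}[n]$ is finite flat from standard properties of the multiplication-by-$n$ map, and obtain étaleness where $n$ is invertible. The only cosmetic difference is in the middle step: the paper cites that $[n]$ is flat and quasi-finite and then upgrades quasi-finite to finite using properness of the abelian scheme, whereas you invoke directly that $[n]$ is finite locally free of degree $n^{2g}$; likewise the paper cites \cite[Section 7.3, Lemma 2 (b)]{BLR_book_1990} for étaleness where you use the fibrewise criterion together with separability of $[n]$ in residue characteristics prime to $n$ — both are standard and equally valid.
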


\begin{proof}
The Néron model $\mathcal{A} \to \mathrm{Spec} (\mathbb{Z})$ restricts to an abelian scheme over $\mathbb{Z} [\frac{1}{N}]$.
By \cite[Section 7.3, Lemma 2 (a)]{BLR_book_1990}, the multiplication-by-$n$ map $\mathcal{A} \xrightarrow[]{\cdot n} \mathcal{A}$ is a flat and quasi-finite morphism over $\mathbb{Z} [\frac{1}{N}]$.
Base changing the multiplication-by-$n$ map by the identity section $\Spec(\ZZ[\frac{1}{N}]) \rightarrow \mathcal{A}$, we find the group scheme $\mathcal{A}[n]$ is flat and quasi-finite over $\mathbb{Z} [\frac{1}{N}]$, since these properties are preserved under base change \cite[pg. 583, 584]{Goertz_Wedhorn_book}.
On the other hand, $\mathcal{A}_{\mathbb{Z} [\frac{1}{N} ]}$ being an abelian scheme is proper, and separated because it is the Néron model over $\mathbb{Z} [\frac{1}{N} ]$ of its generic fibre.
So by \cite[Prop. 12.58 (3)]{Goertz_Wedhorn_book} the multiplication-by-$n$ map is a proper endomorphism of $\mathcal{A}_{\mathbb{Z} [\frac{1}{N} ]}$.
This, together with the quasi-finiteness property, implies that $\mathcal{A}[n]$ is in fact finite over $ \mathbb{Z} [\frac{1}{N} ]$ by \cite[Cor. 12.89]{Goertz_Wedhorn_book}.

Finally, $\mathcal{A}[n]_{\mathbb{Z} [\frac{1}{nN} ]}$ is étale by \cite[Section 7.3, Lemma 2 (b)]{BLR_book_1990}.
\end{proof}

The situation where we find ourselves in this paper is the following: we have an explicit semistable abelian variety $A/\QQ$ with bad reduction at a unique prime $p$, and we would like to prove that any other semistable abelian variety $A'/\mathbb{Q}$ with bad semistable reduction just at $p$ is isogenous to a power of $A$.
The way we proceed, following the strategy pioneered by Fontaine and further developed by Schoof, is to study the $\ell$-adic Tate module $T_\ell(A')$ for a fixed prime $\ell \neq p$.
If we manage to show that there exists $g \in \mathbb{Z}_{\geq 1}$ and an isomorphism of $\Gamma_\mathbb{Q}$-modules
\begin{equation}\label{eq:Tate_modules_isomorphism}
    T_\ell(A')\otimes_{\mathbb{Z}} \mathbb{Q}_\ell \cong T_\ell(A^g)\otimes_{\mathbb{Z}} \mathbb{Q}_\ell
\end{equation}
then by Faltings' Isogeny Theorem \cite[Kor. 2 (ii)]{Faltings_1983} we have that $A'$ is isogenous to $A^g$.

To prove isomorphism \eqref{eq:Tate_modules_isomorphism}, one can of course show the stronger $\Gamma_\mathbb{Q}$-module isomorphism $T_\ell(A') \cong T_\ell(A^g)$ and this in turn is equivalent to having compatible $\Gamma_\mathbb{Q}$-module isomorphisms $A'[\ell^n](\overline{\mathbb{Q}}) \cong A^g[\ell^n](\overline{\mathbb{Q}})$ for all $n \in \mathbb{Z}_{>0}$.
The idea now is to observe that, as shown in Proposition \ref{prop:torsion_is_finite_flat}, we are studying the Galois action on the points of the generic fibre of some group schemes over $\mathbb{Z}$ that become finite and flat over $\mathbb{Z}[\frac{1}{p}]$. Moreover, thanks to work of Grothendieck, the action of inertia at $p$ is constrained, see Lemma \ref{lem:filtration_l_torsion}.
This imposes numerous restrictions on the possible $\Gamma_\mathbb{Q}$-module structures at play. One may even wonder whether, after a good choice for the prime $\ell$, for any semistable abelian variety $A'/\QQ$ with good reduction outside of $p$, the $\ell$-divisible group associated to its Néron model is isomorphic to the $\ell$-divisible group associated to some power of the Néron model of $A$.    

To capture the aforementioned constraints, it will be convenient to introduce the following category.

\begin{definition}
\label{def:categories_of_semistable_objects}
    Let $\ell,N \in \mathbb{\ZZ}_{>0}$ with $\ell$ prime, not dividing $N$. 
Let $\mathcal{C}_{N, \ell}$ be the full subcategory of the category of group schemes over the ring $\ZZ[\frac{1}{N}]$, with objects consisting of finite flat  group schemes $G$ of $\ell$-power order for which the inertia groups $I_p\subseteq \mathrm{Gal}(\mathbb{Q}(G)/\mathbb{Q})$ relative to some prime lying above those dividing $N$ satisfy $(\sigma-1)^2=0$ on $G(\overline{\mathbb{Q}})$ for all $\sigma \in I_p$.

For $p$ a prime different from $\ell$, we write $\mathcal{C}_p \coloneqq \mathcal{C}_{p,2}$.
\end{definition}

We shall use the following lemma repeatedly in the rest of the paper.
Recall that a finite flat group scheme $H$ over a base $S$ is  \textit{simple} if it does not contain proper non-trivial closed finite  flat subgroup schemes over $S$.

\begin{lemma}
\label{lemma:properties_annihiliated_by_l_without_Fontaine}
    Let $H \in \mathcal{C}_{N, \ell}$ be an object annihilated by $\ell$.
    Let $\Gamma = \Gal(\QQ(H)/\QQ)$.
    The extension $\QQ(H)/\QQ$ enjoys the following properties:
        \begin{enumerate}
        \item[(a)] it is unramified outside $ N \ell$;
        \item[(b)] all inertia groups relative to primes lying above primes dividing $N$ have order dividing $\ell$.
    \end{enumerate}
    In particular, the field of points of any simple object in $\mathcal{C}_{N, \ell}$ satisfies the above properties.
\end{lemma}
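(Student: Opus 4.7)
The plan is to prove (a) and (b) essentially independently, using that $H(\overline{\QQ})$ is an $\mathbb{F}_\ell$-vector space with a faithful action of $\Gamma := \Gal(\QQ(H)/\QQ)$, and then handle the ``in particular'' clause via Lemma \ref{lem:correspondence_submodules_subschmes}.

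For (a), I would argue that a finite flat group scheme of $\ell$-power order over $\ZZ[\tfrac{1}{N}]$ becomes finite étale after inverting $\ell$; this is the standard fact that finite flat group schemes of $\ell$-power order over a base on which $\ell$ is invertible are étale (e.g.\ \cite[Section 3.7 (II)]{Tate_FLT}). Thus $H_{\ZZ[1/(N\ell)]}$ is a finite étale cover of $\Spec(\ZZ[\tfrac{1}{N\ell}])$, and its geometric points generate a field unramified outside $N\ell$.

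For (b), fix a prime $p \mid N$ (so $p \neq \ell$) and $\sigma \in I_p$. By hypothesis $\sigma = 1 + n$ with $n^2 = 0$ as an $\mathbb{F}_\ell$-linear endomorphism of $H(\overline{\QQ})$. Expanding,
\[
\sigma^\ell = (1+n)^\ell = 1 + \ell n + \binom{\ell}{2}n^2 + \cdots + n^\ell = 1,
\]
since $\binom{\ell}{k} \equiv 0 \pmod{\ell}$ for $1 \leq k \leq \ell-1$ (so those terms vanish on an $\mathbb{F}_\ell$-module), and $n^k = 0$ for $k \geq 2$. Faithfulness of the $\Gamma$-action on $H(\overline{\QQ})$ then gives $\sigma^\ell = 1$ in $\Gamma$, so $I_p$ has exponent dividing $\ell$. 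Now the standard structure of inertia at $p \neq \ell$ in an extension of a $p$-adic local field says that the wild inertia subgroup is pro-$p$ while the tame quotient is pro-cyclic. Since $\ell \neq p$, a pro-$p$ group of exponent $\ell$ is trivial, so $I_p$ is tame, hence pro-cyclic. A pro-cyclic group of exponent $\ell$ has order dividing $\ell$, yielding (b).

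For the ``in particular'' statement, it remains to see that any simple $H \in \mathcal{C}_{N,\ell}$ is annihilated by $\ell$. I would apply Lemma \ref{lem:correspondence_submodules_subschmes} to the Dedekind domain $R = \ZZ[\tfrac{1}{N}]$: the Galois submodule $H[\ell](\overline{\QQ}) \subseteq H(\overline{\QQ})$ corresponds to some closed \emph{flat} subgroup scheme $K \subseteq H$, and simplicity forces $K \in \{0, H\}$. Since $H$ is nonzero of $\ell$-power order, $H(\overline{\QQ})$ is a nontrivial finite $\ell$-group and so contains $\ell$-torsion, ruling out $K = 0$. Hence $K = H$, i.e.\ $H(\overline{\QQ})$ is killed by $\ell$; since (a) and (b) refer only to $\QQ(H)$, which depends only on $H(\overline{\QQ})$ as a $\Gamma_\QQ$-module, the first part applies. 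The most delicate point in the whole argument is keeping track of which ``kernel'' is flat in this last step: one must pass through Lemma \ref{lem:correspondence_submodules_subschmes} rather than naively taking the scheme-theoretic kernel $H[\ell]$, which as Example \ref{example:closedbutnotflatsubgroupscheme} warns need not be flat.
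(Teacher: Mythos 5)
Your proof is correct, and for parts (a) and (b) it follows the paper's argument essentially verbatim: étaleness over $\ZZ[\tfrac{1}{N\ell}]$ for (a), and the binomial expansion of $(1+n)^\ell$ with $n^2=0$ on an $\ell$-torsion module, followed by tameness of $I_p$ for $p\neq\ell$, for (b). The only genuine divergence is in the ``in particular'' clause. The paper shows a simple $\ell$-power-order object is killed by $\ell$ by observing that $\ell\cdot H$ is a \emph{proper} closed flat subgroup scheme (via Deligne's theorem that $H$ is annihilated by its order), hence zero by simplicity. You instead work with the kernel of multiplication by $\ell$ at the level of Galois modules and invoke Lemma \ref{lem:correspondence_submodules_subschmes} to produce the corresponding closed \emph{flat} subgroup scheme, which simplicity then forces to be all of $H$; your explicit warning that the scheme-theoretic kernel $H[\ell]$ need not itself be flat (cf.\ Example \ref{example:closedbutnotflatsubgroupscheme}) is exactly the right subtlety to flag, and is arguably spelled out more carefully than in the paper. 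Both routes are valid; yours trades the appeal to Deligne's theorem for the submodule--subscheme correspondence already established in the paper. Your closing remark that (a) and (b) only require $H(\overline{\QQ})$ to be killed by $\ell$ is also fine (alternatively, Corollary \ref{Cor:if_n_kills_the_base_change_then_it_kills_you_too} upgrades this to annihilation of $H$ itself).
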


\begin{proof}
    The Galois extension $\QQ(H)/\QQ$ is finite and unramified outside $\ell N$ since $H$ is étale over $\ZZ[\frac{1}{\ell N}]$ by \cite[pg. 138, (II)]{Tate_FLT}.     

As $H \in \mathcal{C}_{N, \ell}$, for any $\sigma \in I_p$ we have $(\sigma-1)^2=0$ on $H(\overline{\QQ})$, thus in particular, $(\sigma-1)^\ell =0$.
Hence, by the binomial expansion $(\sigma-1)^\ell = \sigma^\ell -1 =0$ as $H$ is annihilated by $\ell$.
It follows that the extension $\QQ(H)/\QQ$ is tamely ramified at $p$, so $I_p$ is cyclic and hence of order dividing $\ell$.

It rests to show that a simple finite flat group scheme $H$ of $\ell$-power order is necessarily annihilated by $\ell$.
This follows from noting that $\ell \cdot H$ is a proper subgroup scheme of $H$ by Lemma \ref{lem:correspondence_submodules_subschmes}, since $\ell \cdot H(\overline{\QQ})$ is a proper submodule of $H(\overline{\QQ})$.
\end{proof}

The next lemma outlines some key properties of the group schemes $\mathcal{A}[\ell^n]$ for $n \in \mathbb{Z}_{>0}$, showing in particular, that they belong to $\mathcal{C}_{N, \ell}$.

\begin{lemma} \label{lem:filtration_l_torsion}
Let $A$ be a semistable abelian variety over $\mathbb{Q}$ with good reduction outside of $N$.
Let $\mathcal{A}\to \Spec(\ZZ[\frac{1}{N}])$ be the Néron model of $A$.
Then for every non-zero $\ell$ coprime to $N$ and $n \in \mathbb{Z}_{>0}$ the $\ell^n$-torsion group scheme $\mathcal{A}[\ell^n]$ admits a filtration 
\[
\mathcal{A}[\ell^n] = G_0 \supseteq G_1 \supseteq \ldots \supseteq G_m = 0
\]
where each $G_i$ is a closed finite flat subgroup scheme of $\mathcal{A}[\ell^n]$ over $\ZZ[\frac{1}{N}]$ and the quotients $H_i\coloneqq G_i/G_{i+1}$ are simple. Moreover, the following holds true:
\begin{enumerate}
    \item For all $i\in \{0,1,...,m\}$ and primes $p $ dividing $ N$, the inertia groups $I_p\subseteq \mathrm{Gal}(\mathbb{Q}(G_i)/\mathbb{Q})$ relative to primes lying above $p$ satisfy for all $\sigma \in I_p$ that $(\sigma-1)^2=0$ on $G_i(\overline{\mathbb{Q}})$.
    In particular, they are cyclic of $\ell$-power order.
    \item For all $i\in \{0,1,...,m\}$ and primes $p$ dividing $ N$, the inertia groups $I_p \subseteq \mathrm{Gal}(\mathbb{Q}(H_i)/\mathbb{Q})$ relative to primes lying above $p$ are cyclic of order dividing $\ell$ and for all $\sigma \in I_p$ we have $(\sigma-1)^2=0$ on $H_i(\overline{\mathbb{Q}})$.
\end{enumerate}
In particular, each $G_i$ and $H_i$ is an object of $\mathcal{C}_{N,\ell}$
\end{lemma}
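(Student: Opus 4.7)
The plan is to construct the filtration on the Galois-module side via the bijection of \Cref{lem:correspondence_submodules_subschmes}, and then to transfer the inertia conditions from Grothendieck's monodromy theorem for semistable abelian varieties. By \Cref{prop:torsion_is_finite_flat}, the group scheme $\mathcal{A}[\ell^n]$ is finite flat over the Dedekind domain $\ZZ[\frac{1}{N}]$, so \Cref{lem:correspondence_submodules_subschmes} identifies its closed finite flat subgroup schemes with the $\Gamma_\QQ$-submodules of $M \coloneqq \mathcal{A}[\ell^n](\overline{\QQ})$. Since $M$ is finite it admits a composition series $M = M_0 \supsetneq M_1 \supsetneq \cdots \supsetneq M_m = 0$ of $\Gamma_\QQ$-submodules with simple quotients; let $G_i \subseteq \mathcal{A}[\ell^n]$ be the closed finite flat subgroup scheme corresponding to $M_i$. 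The quotients $H_i \coloneqq G_i/G_{i+1}$ exist as finite flat group schemes over $\ZZ[\frac{1}{N}]$ by the standard theory of quotients of finite flat affine group schemes by closed finite flat subgroup schemes. Applying \Cref{lem:correspondence_submodules_subschmes} to $H_i$ itself, its closed finite flat subgroup schemes correspond to $\Gamma_\QQ$-submodules of $H_i(\overline{\QQ}) = M_i/M_{i+1}$, so simplicity of the latter translates to simplicity of $H_i$.

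For property (1), the essential input is Grothendieck's monodromy theorem in its semistable form: since $A$ is semistable at every $p \mid N$, for any $\ell$ coprime to $N$ the inertia subgroup $I_p \subseteq \Gamma_\QQ$ at a prime above $p$ acts on $T_\ell(A)$ through its tame quotient and by unipotent automorphisms satisfying $(\sigma - 1)^2 = 0$. Reducing modulo $\ell^n$ yields $(\sigma-1)^2 = 0$ on $\mathcal{A}[\ell^n](\overline{\QQ})$, and the identity restricts to the $\Gamma_\QQ$-stable submodule $G_i(\overline{\QQ})$. The ``cyclic of $\ell$-power order'' claim follows because the tame inertia acts on the finite $\ell$-torsion module $G_i(\overline{\QQ})$ through its pro-$\ell$ quotient, which is itself a quotient of $\ZZ_\ell$ and hence cyclic of $\ell$-power order.

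Property (2) is then quick: the identity $(\sigma-1)^2 = 0$ is inherited by the quotient $H_i(\overline{\QQ}) = G_i(\overline{\QQ})/G_{i+1}(\overline{\QQ})$, which places $H_i$ in $\mathcal{C}_{N,\ell}$. As $H_i$ is simple and of $\ell$-power order, Deligne's theorem (invoked exactly as at the end of the proof of \Cref{lemma:properties_annihiliated_by_l_without_Fontaine}) forces $\ell \cdot H_i = 0$. \Cref{lemma:properties_annihiliated_by_l_without_Fontaine} applied to $H_i$ then upgrades (1) to the sharper assertion that the inertia groups at primes above $p \mid N$ are cyclic of order dividing $\ell$. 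The only substantive external ingredient in the whole argument is Grothendieck's semistable monodromy theorem; everything else is formal bookkeeping with the correspondence of \Cref{lem:correspondence_submodules_subschmes} and with the stability of the condition $(\sigma-1)^2 = 0$ under passage to subobjects and quotients.
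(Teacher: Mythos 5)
Your proposal is correct and follows essentially the same route as the paper: the filtration is obtained from a composition series of $\mathcal{A}[\ell^n](\overline{\QQ})$ via \Cref{prop:torsion_is_finite_flat} and \Cref{lem:correspondence_submodules_subschmes}, the condition $(\sigma-1)^2=0$ comes from Grothendieck's semistable monodromy theorem (the paper cites SGA 7, Exp.\ IX, Prop.\ 3.5 directly for $\mathcal{A}[\ell^n]$ rather than reducing from $T_\ell(A)$, a cosmetic difference) and is inherited by subobjects and quotients, and the sharper order-dividing-$\ell$ statement for the simple quotients is deduced from \Cref{lemma:properties_annihiliated_by_l_without_Fontaine} exactly as you do. No gaps.
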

 
\begin{proof}
The group scheme $\mathcal{A}[\ell^n]$ is finite and flat over $\ZZ[\frac{1}{N}]$ by Proposition \ref{prop:torsion_is_finite_flat}.
As $\mathcal{A}[\ell^n](\overline{\QQ})$ is a finite $\Gamma_\QQ$-module, it follows from Lemma \ref{lem:correspondence_submodules_subschmes} that $\mathcal{A}[\ell^n]$ admits a filtration
\[
\mathcal{A}[\ell^n] = G_0 \supseteq G_1 \supseteq \ldots \supseteq G_m = 0
\]
by closed finite flat subgroup schemes $G_i$ over $\ZZ[\frac{1}{N}]$ with successive simple quotients $H_i$ being finite flat  $\ell$-group schemes.

Let $p$ be a prime dividing $N$.
By \cite[Exp. IX, Prop. 3.5]{Grothendieck_book_1972}, we have that for every prime $\mathfrak{p} $ of $\QQ(\mathcal{A}[\ell^n])$ lying above $p$ the corresponding inertia group $I(\mathfrak{p}/p) \subseteq \mathrm{Gal}(\mathbb{Q}(\mathcal{A}[\ell^n])/\mathbb{Q})$ acts on the $\overline{\mathbb{Q}}$-points of $\mathcal{A}[\ell^n]$ by elements $\sigma$ satisfying $(\sigma-1)^2=0$.
In particular it follows that $I(\mathfrak{p}/p)$ is cyclic of $\ell$-power order \cite[Exp. IX, Cor. 3.5.2]{Grothendieck_book_1972}.
By definition of group action on submodules and quotients, the same properties hold for the inertia groups $I_p$ at $p$ in each $\Gal(\QQ(G_i)/\QQ)$ and $\Gal(\QQ(H_i)/\QQ)$.
Thus $G_i$ and $H_i$ are objects of $\mathcal{C}_{N,\ell}$.
The remaining claim now follows from Lemma \ref{lemma:properties_annihiliated_by_l_without_Fontaine}.
\end{proof}

Lemma \ref{lem:filtration_l_torsion} shows us that any $\ell$-divisible group which arises from a semistable abelian variety with good reduction outside of $N$, is made up of objects in $\mathcal{C}_{N, \ell}$.
Thus to classify such $\ell$-divisible groups we may start by studying those  coming from objects in $\mathcal{C}_{N, \ell}$.
This in turn starts with classifying the simple objects of $\mathcal{C}_{N, \ell}$.

The classification of simple objects $H$ in $\mathcal{C}_{N, \ell}$  follows a two-step strategy.
First of all, we classify the possible generic fibres $H_\mathbb{Q}$.
Since finite flat group schemes over fields of characteristic 0 are étale \cite[pg. 138, (II)]{Tate_FLT}, this is equivalent to classifying the corresponding $\Gamma_\mathbb{Q}$-modules $H(\overline{\mathbb{Q}})$ \cite[pg. 137]{Tate_FLT}. 
These modules, being the representations attached to a simple object in $\mathcal{C}_{N, \ell}$, must respect various ramification constraints that will be further discussed in Section \ref{sec:strategy_generic_fibre}.

Once we have the classification of the generic fibres $H_\mathbb{Q}$, one has to understand in how many ways $H_\mathbb{Q}$ can be prolonged to $\mathbb{Z}[\frac{1}{N}]$ in order to get an object of $\mathcal{C}_{N, \ell}$.
The main tools used for this are Proposition \ref{prop:grp_schms_order_2} and results from Raynaud's paper \cite{Raynaud_type_p}, as discussed in Section \ref{sec:strategy_prolongations}.

Having determined the simple objects of $\mathcal{C}_{N, \ell}$,
we then examine what $\ell$-divisible groups they can give rise to.
In doing this one first makes a few preliminary restrictions on the $\ell$-group schemes that can occur in an $\ell$-divisible group coming from an abelian variety.
In this article, sufficient restrictions on such $\ell$-group schemes will follow from classifying extensions of simple objects in $\mathcal{C}_{N, \ell}$ by one another.
With these preliminary restrictions in hand, one then proceeds to show that all possible $\ell$-divisible groups must come from a known abelian variety.

To sum up, the strategy that we will follow in order to classify semistable abelian varieties over $\mathbb{Q}$ with good reduction outside of $N$ can be summarised in the following steps: 

\begin{enumerate}
    \item Classification of the generic fibre of simple objects in $\mathcal{C}_{N, \ell}$ for some $\ell \nmid N$;
    \item Prolongation of the generic fibres obtained in Step 1 to determine all simple objects in $\mathcal{C}_{N, \ell}$;
    \item Classify extensions in $\mathcal{C}_{N, \ell}$ of its simple objects by one another.
    \item Classification of the $\ell$-divisible groups over $\mathbb{Z}[\frac{1}{N}]$ attached to semistable abelian varieties over $\mathbb{Q}$ with good reduction outside of $N$.
\end{enumerate}

 Note that in order to perform the steps above one does not a priori need any hypothesis on the prime $\ell \nmid N$.
 In practice, however, to carry out the first step, one is often forced to choose $\ell=2$.
 The reason for this, as we will see, comes in when trying to bound the degree of the extensions generated by the generic fibres of simple objects in $\mathcal{C}_{N, \ell}$.

It is notable, however, that Fontaine does not use $\ell =2$.
His reason for this is that for an odd prime $\ell$, the category $\mathcal{C}_{N, \ell}$ is abelian \cite[Thm. 2, pg. 527]{Fontaine_pas_de_variete_abelienne}.
Whereas, as noted in Remark \ref{remark:not_abelian}, this is not true of $\mathcal{C}_{N,2}$.
Since we are forced to work in $\mathcal{C}_{N, 2}$, this a priori throws out the tools of homological algebra, which would be near essential in Step 3 of the above.
Thankfully, it is possible to overcome these technical difficulties as explained in Section \ref{subsection:extensions}.

\subsection{Step 1: The generic fibre of simple objects in $\mathcal{C}_{p, \ell}$} \label{sec:strategy_generic_fibre}

For the sake of simplicity, we shall restrict to the case that $N=p$ is a prime (different from $\ell$) from now on.
Let $H$ be a simple object of $\mathcal{C}_{p, \ell}$. As explained above, classifying the possibilities for the generic fibre $H_\mathbb{Q}$ up to isomorphism is equivalent to classifying the possible $\Gamma_\mathbb{Q}$-modules $H(\overline{\mathbb{Q}})$ up to isomorphism.
In fact, since $H$ is a simple finite flat group scheme, this reduces to classifying the irreducible such modules as $H(\overline{\mathbb{Q}})$ is simple by Lemma \ref{lem:correspondence_submodules_subschmes}.

In Lemma \ref{lemma:properties_annihiliated_by_l_without_Fontaine}, we saw $H(\overline{\QQ})$ is unramified outside $p\ell$  and that the inertia groups corresponding to primes above $p$ were heavily restricted.
A theorem of Fontaine (which we state as Theorem \ref{thm:Fontaine} below) provides restrictions on the higher ramification groups at $\ell$.
This result is more naturally expressed in the local setting, to which we can always reduce after completing $\QQ(H)$ at some prime above $\ell$.

We state Fontaine's theorem, starting with the necessary setup and a review of fundamental facts about finite extensions of $p$-adic fields.
Let $K$ be a finite extension of $\mathbb{Q}_\ell$ for some prime $\ell$ and let $L$ be a finite Galois extension of $K$ with Galois group $\Gamma$ over $K$.
If $\mathcal{O}_L$ denotes the ring of integers in $L$ and $v_L$ the normalised valuation associated to $L$, then for every $i \in \mathbb{Z}$ with $i\geq -1$ we define 
\[
\Gamma_i\coloneqq  \{\sigma \in G: v_L(\sigma(x)-x) \geq i + 1  \text{ for all } x\in \mathcal{O}_L\} \subseteq \Gamma.
\]
The group $\Gamma_i$ is the \textit{$i$-th (higher) ramification group in lower numbering}.
One has that $\Gamma_i$ is a normal subgroup of $\Gamma = \Gamma_{-1}$.
The groups $\Gamma_0=I(L/K)$ and $\Gamma_1$ are respectively the inertia   and the wild inertia group of the extension $L/K$.
Moreover, by \cite[Prop. IV.6, Prop. IV.7]{Serre_book_1979}, the quotient $\Gamma_0/\Gamma_1$ is cyclic of order coprime to $\ell$ while $\Gamma_1$ is an $\ell$-group. 

One can rescale the numbering of the ramification groups as follows: first of all, we extend the lower numbering from $\mathbb{Z}_{\geq -1}$ to the real interval $[-1, + \infty)$ by setting $\Gamma_u \coloneqq \Gamma_{\lceil u \rceil}$, where $\lceil u \rceil$ denotes the smallest integer $\geq u$.
Then, we use the bijective function $\varphi \colon [-1, + \infty) \to [-1, + \infty)$ defined on \cite[pg. 73]{Serre_book_1979} to modify the numbering of the ramification groups. More precisely, we define
\[
\Gamma^u\coloneqq \Gamma_{\varphi^{-1}(u)} \hspace{0.5cm} (\text{or equivalently } \Gamma^{\varphi(u)}=\Gamma_u )
\]
and we call $\Gamma^u$ the \textit{$u$-th ramification group in upper numbering}.
From the definitions it follows that $\Gamma^{-1}=\Gamma=\Gamma_{-1}$ and $\Gamma^0=I(L/K)=\Gamma_0$.
We are now ready to state Fontaine's theorem \cite[\S 2.1, Thm. 1]{Fontaine_pas_de_variete_abelienne}.

\begin{theorem} \label{thm:Fontaine}
Let $\ell$ be a prime, $K/\mathbb{Q}_\ell$ a finite extension of ramification index $e \in \mathbb{Z}_{>0}$ and $\mathcal{O}_K$ the ring of integers of $K$.
Suppose that $H$ is a finite flat group scheme over $\mathcal{O}_K$ that is annihilated by $\ell^n$  for some $n \in \mathbb{Z}_{>0}$.
Then, setting $\Gamma\coloneqq\mathrm{Gal}(K(H)/K)$, we have that $\Gamma^{u}=\{1\}$ for every 
\[
u> e\cdot \left( n + \frac{1}{\ell-1} \right)-1.
\]
\end{theorem}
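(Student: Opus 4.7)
The plan is to follow Fontaine's original strategy, which is a direct analysis at the level of the Hopf algebra of $H$ combined with standard ramification-theoretic bookkeeping. It is natural to split the argument into three steps.

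\textbf{Step 1 (Reductions).} First, replace $K$ by the maximal unramified subextension $K^{\mathrm{un}}$ of $K(H)/K$. The ramification index $e$ of $K/\mathbb{Q}_\ell$ does not change (base change along an unramified extension preserves $e$), and afterwards $\Gamma = \Gal(K(H)/K)$ coincides with its own inertia subgroup, which we may identify with $\Gamma_0$. Second, one tries to reduce to the case $n = 1$ by dévissage via the short exact sequences $0 \to H[\ell] \to H \to H/H[\ell] \to 0$; in practice, however, it is cleanest to run the argument below for arbitrary $n$ directly, as the induction contributes the $n$ in the final bound.

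\textbf{Step 2 (The Hopf-algebra estimate).} Write $H = \Spec(A)$ with augmentation ideal $I_A = \ker(A \to \mathcal{O}_K)$. A point $x \in H(\overline{K})$ is an $\mathcal{O}_K$-algebra map $x: A \to \mathcal{O}_{\overline{K}}$, and for $\sigma \in \Gamma$ the map $\delta^x_\sigma := \sigma \circ x - x: A \to \mathcal{O}_{\overline{K}}$ vanishes on $\mathcal{O}_K$, so it factors through $I_A$. Using the Hopf-algebra structure (in particular, iterating the coproduct), one shows that the $\overline{\mathcal{O}_K}$-module generated by $\delta^x_\sigma(I_A)$ is controlled by the image of the tangent map, i.e., by the induced map $\overline{\delta}^x_\sigma: I_A/I_A^2 \to \mathcal{O}_{\overline{K}}$. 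The key input is now that $[\ell^n]_H = 0$, which translated to $A$ says that the composition
\[
A \xrightarrow{[\ell^n]^*} A \to \mathcal{O}_K
\]
is the augmentation. Combined with the fact that $[\ell]_H^*$ acts on $I_A/I_A^2$ as multiplication by $\ell$, a straightforward iteration produces, for any would-be nonzero $\delta^x_\sigma$, the lower bound
\[
v_L\bigl(\delta^x_\sigma(t)\bigr) \;\le\; e \left( n + \tfrac{1}{\ell - 1} \right) - 1 \quad \text{for some } t \in I_A.
\]
The factor $1/(\ell-1)$ appears as the sum of the geometric series $\sum_{i\ge 1} \ell^{-i}$ arising from repeatedly applying $[\ell]^*$, while the $en$ term records the $n$ successive divisions by $\ell$ permitted by the $\ell^n$-torsion hypothesis.

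\textbf{Step 3 (Passing to upper numbering).} The preceding bound on $v_L(\delta^x_\sigma(t))$ translates directly to a lower-numbering statement: if $\sigma \in \Gamma_u$ for $u$ exceeding the right-hand side, then $\delta^x_\sigma \equiv 0$ for every $x$, so $\sigma$ fixes $H(\overline{K})$ pointwise; since $L = K(H)$, this forces $\sigma = 1$. One then applies the Herbrand function $\varphi_{L/K}$ and the identity $\Gamma^{\varphi(u)} = \Gamma_u$ to convert the vanishing statement into the upper-numbering bound $\Gamma^u = 1$ for $u > e(n + 1/(\ell-1)) - 1$. In the totally ramified setting of Step 1, $\varphi$ is piecewise linear with slopes $\le 1$, so the conversion only improves the bound, giving exactly the form claimed.

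\textbf{Main obstacle.} The genuinely delicate step is the Hopf-algebra estimate in Step 2 — tracking how the augmentation ideal behaves under iterated pullback by $[\ell]^*$ and obtaining the sharp constant $e(n + 1/(\ell-1)) - 1$. Everything else (the reduction, and the Herbrand-function translation) is essentially formal, but the calculation at the level of $I_A$ modulo powers is where the finite-flatness and commutative-group-scheme hypotheses do the real work.
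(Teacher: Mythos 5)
The paper does not prove this statement: Theorem~\ref{thm:Fontaine} is quoted verbatim from Fontaine's article and used as a black box (the citation \cite[\S 2.1, Thm.~1]{Fontaine_pas_de_variete_abelienne} immediately precedes the statement, and no proof follows it). So there is no in-paper argument to compare yours against; what you have written is an outline of Fontaine's original proof. As such an outline it identifies the right ingredients, but as a proof it has a genuine gap exactly where you flag one: Step 2, the estimate $v_L(\delta^x_\sigma(t)) \le e(n + \tfrac{1}{\ell-1}) - 1$, is asserted rather than derived. Everything that makes the theorem true --- the interaction of $[\ell]^*$ with the filtration by powers of the augmentation ideal, the appearance of $\tfrac{1}{\ell-1}$ from the valuations of $\ell$-torsion of the attached formal group, and the fact that finite flatness lets you control \emph{all} of $I_A$ and not just $I_A/I_A^2$ --- is compressed into the phrase ``a straightforward iteration produces,'' and it is not straightforward.

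There is also a quantitative slip in Step 3. The congruence statement you extract from Step 2 naturally lives on the $v_L$-scale (that is how $\Gamma_u$ is defined), so the lower-numbering break you would actually obtain is of size roughly $e_{L/K}\cdot e(n+\tfrac{1}{\ell-1})$, not $e(n+\tfrac{1}{\ell-1})-1$; it is precisely the division by $[\Gamma_0:\Gamma_t]$ built into the Herbrand function that brings this back down to the stated constant. Saying the conversion ``only improves the bound'' and is ``essentially formal'' therefore misstates where the sharpness comes from: Fontaine handles this by characterising the upper-numbering filtration directly in terms of congruences of integral points (his property $(P_m)$ in \S 1 of \cite{Fontaine_pas_de_variete_abelienne}), which is itself a nontrivial lemma. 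Finally, the proposed d\'evissage to $n=1$ is not innocuous, since $K(H)$ need not equal the compositum $K(H[\ell])\,K(H/H[\ell])$; one needs the stability of the ramification bound under the relevant extensions, which is again part of Fontaine's \S 1 machinery. Given that the paper treats this as an external input, the cleanest fix is simply to cite Fontaine rather than to reprove the estimate.
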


\begin{corollary}
\label{Cor:properties_of_annihilated_by_ell_with_Fontaine}
    Let $H \in \mathcal{C}_{p, \ell}$ be an object annihilated by $\ell$.
    Let $\Gamma = \Gal(\QQ(H)/\QQ)$.
    The extension $\QQ(H)/\QQ$ enjoys the following properties:
        \begin{enumerate}
        \item[(a)] it is unramified outside $p\ell$;
        \item[(b)] all inertia groups relative to primes lying above $p$ have order dividing $\ell$;
        \item[(c)] the higher ramification groups $\Gamma_\ell^u$ in upper numbering at all primes above $\ell$ are trivial for $u>1/(\ell-1)$.
    \end{enumerate}
        In particular, the field of points of any simple object in $\mathcal{C}_{p, \ell}$ satisfies the above properties.
\end{corollary}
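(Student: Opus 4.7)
The plan is to observe that (a) and (b) are already part of Lemma \ref{lemma:properties_annihiliated_by_l_without_Fontaine} (applied with $N = p$), and the ``In particular'' clause follows since simple objects of $\mathcal{C}_{p,\ell}$ are annihilated by $\ell$, as shown in the last paragraph of the proof of that lemma. So the only genuinely new content is (c), which is a direct application of Fontaine's Theorem \ref{thm:Fontaine}.

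To deduce (c), I would fix a prime $\mathfrak{l}$ of $\QQ(H)$ above $\ell$ and reduce to the local situation. Write $L$ for the completion of $\QQ(H)$ at $\mathfrak{l}$. The decomposition group $D_\mathfrak{l} \subseteq \Gamma$ is naturally identified with $\Gal(L/\QQ_\ell)$, and by definition the higher ramification groups $\Gamma_\ell^u$ at $\mathfrak{l}$ coincide with those of $\Gal(L/\QQ_\ell)$. Since $H$ is finite flat over $\ZZ[\frac{1}{p}]$ and $\ell\neq p$, the base change $H_{\ZZ_\ell} \coloneqq H \times_{\ZZ[1/p]} \ZZ_\ell$ is a finite flat group scheme over $\ZZ_\ell$, still annihilated by $\ell$. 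Fixing an embedding $\overline{\QQ} \hookrightarrow \overline{\QQ}_\ell$ extending $\mathfrak{l}$ we get $H(\overline{\QQ}) = H_{\ZZ_\ell}(\overline{\QQ}_\ell)$ as modules for $D_\mathfrak{l} = \Gal(\overline{\QQ}_\ell/\QQ_\ell)$-image, so that $L = \QQ_\ell(H_{\ZZ_\ell})$.

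Now apply Theorem \ref{thm:Fontaine} to $H_{\ZZ_\ell}$ with $K = \QQ_\ell$, ramification index $e = 1$, and $n = 1$. This yields
\[
\Gamma_\ell^u = \Gal(L/\QQ_\ell)^u = \{1\} \quad \text{for all } u > 1\cdot\Bigl(1 + \tfrac{1}{\ell-1}\Bigr) - 1 = \tfrac{1}{\ell-1},
\]
which is exactly (c). Since the choice of prime $\mathfrak{l}$ above $\ell$ was arbitrary, this holds at every such prime.

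There is no real obstacle here: the only point that requires care is verifying that the local and global upper-numbering ramification groups match, which is a standard consequence of the compatibility of the upper numbering with passage to decomposition subgroups (noting that $L/\QQ_\ell$ is the completion of $\QQ(H)/\QQ$ at $\mathfrak{l}$). Once this identification is made, Fontaine's theorem applies off the shelf.
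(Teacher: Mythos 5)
Your proposal is correct and matches the paper's argument: the paper simply cites Lemma \ref{lemma:properties_annihiliated_by_l_without_Fontaine} for (a), (b) and the ``in particular'' clause, and Theorem \ref{thm:Fontaine} (applied locally at a prime above $\ell$ with $K=\QQ_\ell$, $e=1$, $n=1$) for (c). You have merely spelled out the localisation step that the paper leaves implicit.
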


\begin{proof}
This is a direct consequence of Lemma \ref{lemma:properties_annihiliated_by_l_without_Fontaine} and Theorem \ref{thm:Fontaine}.
\end{proof}

Hence, the ramification properties of the extension $ \QQ(H)/\QQ$ are fairly well-understood.
We now aim to examine the minimal (possibly infinite) extension of $\mathbb{Q}$ that contains all finite extensions exhibiting these ramification properties.
The following result establishes the existence of this extension and provides useful information that, at least in the case considered here, aids in its explicit computation.

\begin{lemma} \label{lem:stability_properties_field_extension}
    Let $p, \ell \in \mathbb{Z}$ be distinct primes and $F/\QQ$ a finite Galois extension with Galois group $\Gamma$ satisfying the following properties:
    \begin{enumerate}
        \item[(a)] the extension is unramified outside $p \ell$;
        \item[(b)] all inertia groups relative to primes lying above $p$ have order dividing $\ell$;
        \item[(c)] the higher ramification groups $\Gamma_\ell^u$ in upper numbering at all primes above $\ell$ are trivial for $u>1/(\ell-1)$.
    \end{enumerate}
    Then the following hold true:
    \begin{enumerate}
        \item If $ F'/\QQ$ is another finite Galois extension satisfying (a)--(c), then the extension $ FF'/\QQ$ satisfies (a)--(c).
        \item If $H$ is the narrow Hilbert class field of $F$, then the extension $ H/\QQ$ satisfies (a)--(c). 
        \item If $\Delta_{F/\mathbb{Q}}$ denotes the discriminant of $F/\QQ$, we have
        \[
        v_p(\Delta_{F/\mathbb{Q}}) \leq \frac{\ell-1}{\ell}[F\colon \mathbb{Q}], \hspace{0.5cm} v_\ell(\Delta_{F/\mathbb{Q}}) < \frac{\ell}{\ell-1}[F \colon \mathbb{Q}]
        \]
        where $v_p$ and $v_\ell$ are the normalised valuations on $\mathbb{Q}$ corresponding to the primes $p$ and $\ell$ respectively.
    \end{enumerate}
    
\end{lemma}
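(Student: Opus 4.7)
The plan is to address each of the four assertions in turn, with Herbrand's theorem (the compatibility of upper-numbering ramification filtrations with quotients) as the main technical tool throughout.

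For \textbf{(1)}, property (a) is immediate since ramification of $FF'$ at a prime $q$ forces ramification of $F$ or $F'$ at $q$. For (b), the restriction maps embed $\Gal(FF'/\QQ)$ into $\Gal(F/\QQ)\times\Gal(F'/\QQ)$, so the inertia at a prime of $FF'$ above $p$ embeds into a product of two cyclic groups of order dividing $\ell$; since $p\neq\ell$ this inertia is tame, hence cyclic, so its order still divides $\ell$. For (c), Herbrand gives that $\Gal(FF'/\QQ)^u$ surjects onto each of $\Gal(F/\QQ)^u$ and $\Gal(F'/\QQ)^u$ (working locally at a prime above $\ell$); for $u>1/(\ell-1)$ both targets are trivial, hence $\Gal(FF'/\QQ)^u\subseteq\Gal(FF'/F)\cap\Gal(FF'/F')=\{1\}$. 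For \textbf{(2)}, $H/\QQ$ is Galois by the canonical construction of the Hilbert class field. Since $H/F$ is everywhere unramified, the inertia at any prime $\mathfrak{P}$ of $H$ in $H/\QQ$ maps isomorphically onto the inertia at $\mathfrak{P}|_F$ in $F/\QQ$, and this isomorphism respects the upper-numbering filtration by Herbrand; properties (a)--(c) for $F$ thus transfer directly to $H$.

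For \textbf{(3)}, I would use the formula $v_\mathfrak{p}(\mathfrak{d})=\sum_{i\geq 0}(|G_i|-1)$ for the valuation of the different at a prime $\mathfrak{p}$ of $F$. At $\mathfrak{p}$ above $p$, condition (b) and tameness give $|G_0|\leq\ell$ and $G_1=\{1\}$, so $v_\mathfrak{p}(\mathfrak{d})=e_\mathfrak{p}-1$; summing with the $f_\mathfrak{p}$-weights and using $\sum f_\mathfrak{p} e_\mathfrak{p}=[F\colon\QQ]$ together with $e_\mathfrak{p}\leq\ell$ yields $v_p(\Delta_F)\leq\frac{\ell-1}{\ell}[F\colon\QQ]$. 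At $\mathfrak{l}$ above $\ell$, the change of variable $v=\varphi(u)$ converts the formula to $v_\mathfrak{l}(\mathfrak{d})=(|G_0|-1)+|G_0|\int_0^\infty(1-1/|G^v|)\,dv$. Condition (c) kills the integrand for $v>1/(\ell-1)$, forcing the integral to be at most $1/(\ell-1)$, so $v_\mathfrak{l}(\mathfrak{d})\leq|G_0|\cdot\ell/(\ell-1)$; summing over primes above $\ell$ produces the second bound.

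For \textbf{(4)}, I would invoke Kronecker--Weber: condition (a) forces $F\subseteq\QQ(\mu_{2^ap^b})$ for some $a,b$. Setting $F_2=F\cap\QQ(\mu_{2^\infty})$ and $F_p=F\cap\QQ(\mu_{p^\infty})$ gives $F=F_2F_p$. Since $\QQ(\mu_{p^\infty})/\QQ$ is totally ramified at $p$, condition (b) forces $[F_p\colon\QQ]\leq 2$, so $F_p$ is contained in the unique quadratic subfield $\QQ(\sqrt{p^*})$ of $\QQ(\mu_{p^\infty})$ (where $p^*=(-1)^{(p-1)/2}p$). The key step is to determine the maximal $F_2\subseteq\QQ(\mu_{2^\infty})$ satisfying (c): a direct computation with the uniformizer $\zeta_{2^n}-1$ of $\QQ_2(\zeta_{2^n})/\QQ_2$ shows that the automorphism $\zeta_{2^n}\mapsto\zeta_{2^n}^a$ moves the uniformizer by valuation $2^{v_2(a-1)}$, so the lower-numbering jumps occur at $i=1,3,7,\ldots,2^{n-1}-1$; applying $\varphi$ translates these to upper-numbering jumps at $v=1,2,\ldots,n-1$. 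Condition (c) with $\ell=2$ then forces $n\leq 2$, so $F_2\subseteq\QQ(\zeta_4)=\QQ(i)$; combining yields $F\subseteq\QQ(i,\sqrt{p^*})=\QQ(i,\sqrt{p})$. For the reverse containment, one verifies that $\QQ(i)$ and $\QQ(\sqrt{p})$ each satisfy (a)--(c) by direct local computation (the case $p\equiv 3\pmod 4$ for $\QQ(\sqrt p)$ at $2$ needs a brief ramification check), and then applies part (1). The main obstacle is this explicit ramification computation for $\QQ_2(\zeta_{2^n})/\QQ_2$; the remaining parts follow relatively routinely from Herbrand's theorem and the different formula.
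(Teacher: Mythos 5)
Your treatments of parts (1)--(3) are correct. Parts (1) and (2) run along the same lines as the paper's (compatibility of upper numbering with quotients, plus tameness/cyclicity of inertia at $p$ in place of the paper's appeal to Abhyankar's lemma). For part (3) your bound at $p$ matches the paper's, and at $\ell$ you derive the inequality $v_{\mathfrak{l}}(\mathfrak{d}) \leq e(\mathfrak{l}/\ell)\cdot \ell/(\ell-1)$ directly from the integral form of the different formula rather than citing it; your identity $v_{\mathfrak{l}}(\mathfrak{d})=(|G_0|-1)+|G_0|\int_0^\infty(1-1/|G^v|)\,dv$ is correct and this is exactly the content of the proposition of Fontaine that the paper invokes, so this is a legitimate (and self-contained) alternative.

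There is, however, a genuine gap in part (4): the claim that $F=F_2F_p$ with $F_2=F\cap\QQ(\mu_{2^\infty})$ and $F_p=F\cap\QQ(\mu_{p^\infty})$ is false. A subfield of a compositum need not be the compositum of its intersections with the factors; "diagonal" subfields escape. Concretely, take $p=19$ and $F=\QQ(\sqrt{19})$. This field satisfies (a)--(c): its discriminant is $4\cdot 19$, inertia at $19$ has order $2$, and the local conductor exponent at $2$ is $2$, so $\Gamma_2^u=1$ for $u>1$. Yet $F\cap\QQ(\mu_{2^\infty})=\QQ$ (the quadratic subfields of $\QQ(\mu_{2^\infty})$ are $\QQ(i)$, $\QQ(\sqrt{2})$, $\QQ(\sqrt{-2})$) and $F\cap\QQ(\mu_{19^\infty})=\QQ$ (the only quadratic subfield there is $\QQ(\sqrt{-19})$), so $F_2F_p=\QQ\neq F$. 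Consequently your bounds $F_p\subseteq\QQ(\sqrt{p^*})$ and $F_2\subseteq\QQ(i)$, though each correct, do not combine to give $F\subseteq\QQ(i,\sqrt{p})$ as written. The repair is to argue at the level of subgroups of $G=\Gal(\QQ(\mu_{2^ap^b})/\QQ)$: writing $F$ as the fixed field of $H\leq G$, condition (b) forces $H$ to contain the squares of the inertia subgroup at $p$ (whose fixed field is $\QQ(\mu_{2^a},\sqrt{p^*})$), and condition (c) together with your correct computation of the upper-numbering jumps of $\QQ_2(\zeta_{2^n})/\QQ_2$ forces $H$ to contain the image of $U^{(2)}$ at $2$ (whose fixed field is $\QQ(i,\mu_{p^b})$); hence $F$ lies in the intersection of these two fixed fields, which is $\QQ(i,\sqrt{p^*})=\QQ(i,\sqrt{p})$. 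The paper reaches the same conclusion by bounding the conductor of $F$ by $4p\infty$ via class field theory and then applying (b) inside the ray class field $\QQ(i,\zeta_p)$ — which is precisely this subgroup-level argument.
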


\begin{proof}
    Let $ F'/\QQ$ be an extension satisfying properties $(a)$--$(c)$.
    Then the compositum $FF'$ clearly satisfies $(a)$ and it satisfies $(b)$ by Abhyankar's lemma \cite[Thm. 1]{Cornell_1979}. 
    It also satisfies $(c)$ since for all $u\geq-1$ the $u$-th higher ramification groups in upper numbering of $F/\QQ$ and of $ F'/\QQ$ are equal to the image under the natural restriction map of the $u$-th higher ramification group in upper numbering of $ FF'/\QQ$, see \cite[Prop. IV.14]{Serre_book_1979}. This proves the first part of the lemma.

    Let $H$ be the narrow Hilbert class field of $F$.
    Since $F$ is a Galois extension of $\mathbb{Q}$, the extension $H/\QQ$ is also Galois and, by definition of the narrow Hilbert class field, $H/F$ is unramified at all finite primes.
    From this, it follows at once that the extension $H/\QQ$ satisfies $(a)$ and $(b)$.
    Again by \cite[Prop. IV.14]{Serre_book_1979} we have that for all $u>1/(\ell-1)$ the $u$-th higher ramification groups $\mathrm{Gal}(H/\mathbb{Q})_\ell^u$ relative to the primes above $\ell$ are contained in $\mathrm{Gal}(H/F)$.
    Since all inertia subgroups in $\mathrm{Gal}(H/F)$ are trivial, we deduce that $\mathrm{Gal}(H/\mathbb{Q})_\ell^u=\{1\}$ for all $u>1/(\ell-1)$. This shows that $H/\QQ$ also satisfies property $(c)$ and the second part of the lemma is proved.

    Let us now analyse the prime factorisation of the discriminant $\Delta_{F/\mathbb{Q}}$.
    Letting $\mathcal{D}_F$ be the different of $ F/\QQ$, we have that $|\Delta_{F/\mathbb{Q}}|= N_{F/\mathbb{Q}}(\mathcal{D}_F)$ where $N_{F/\mathbb{Q}}$ denotes the ideal norm from $F$ to $\mathbb{Q}$.
    
    By condition $(b)$, the prime $p$ is tamely ramified in the extension $F/\QQ$ and the ramification index $e(\mathfrak{p}/p)$ of each prime $\mathfrak{p} $ of $ F$ lying above $p$ divides $\ell$.
    This ramification index does not depend on the choice of the prime $\mathfrak{p}$ since $F$ is a Galois extension of $\mathbb{Q}$. If we have $e(\mathfrak{p}/p)=1$ for all $\mathfrak{p}$, then $v_p(\Delta_{F/\mathbb{Q}})=0$.
    If this is not the case, then $e(\mathfrak{p}/p)=\ell$ for all $\mathfrak{p}$ dividing $p$ and by \cite[Prop. IV.4]{Serre_book_1979} we have that
    \[
    v_\mathfrak{p}(\mathcal{D}_F) = \ell -1  \hspace{0.5cm} \text{for all } \mathfrak{p}\mid p
    \]
    where $v_\mathfrak{p}$ denotes the normalised $\mathfrak{p}$-adic valuation on $F$.
    Taking norms we obtain
    \[
    v_p(\Delta_{F/\mathbb{Q}})=(\ell-1) \sum_{\mathfrak{p}\mid p} f(\mathfrak{p}/p)=(\ell-1) \frac{[F:\mathbb{Q}]}{\ell}
    \]
    where $f(\mathfrak{p}/p)$ is the residue degree of the prime $\mathfrak{p}$. This proves the desired bound on the $p$-adic valuation of $\Delta_{F/\mathbb{Q}}$.

    We now turn to the study of the $\ell$-adic valuation of this discriminant.
    For this, we note that condition $(c)$ on the ramification filtration in upper numbering relative to primes above $\ell$ can be translated into a bound on the valuation at primes above $\ell$ of $\mathcal{D}_F$ by means of \cite[\S 1.3, Prop.]{Fontaine_pas_de_variete_abelienne}.
    This proposition implies that, in our setting, for all primes $\mathfrak{L}$ of $F$ lying above $\ell$ we have
    \[
    v_\mathfrak{L}(\mathcal{D}_F) < e(\mathfrak{L}/\ell) \cdot \left(\frac{1}{\ell-1} +1\right) = e(\mathfrak{L}/\ell) \cdot  \frac{\ell}{\ell-1}
    \]
    where $e(\mathfrak{L}/\ell)$ denotes the ramification index of $\mathfrak{L}$ above $\ell$.
    Taking norms, we obtain
    \[
    v_\ell(\Delta_{F/\mathbb{Q}}) =\sum_{\mathfrak{L}\mid \ell} f(\mathfrak{L}/\ell) v_\mathfrak{L}(\mathcal{D}_F) < \frac{\ell}{\ell-1} \sum_{\mathfrak{L}\mid \ell} f(\mathfrak{L}/\ell) e(\mathfrak{L}/\ell)= \frac{\ell}{\ell-1} [F:\mathbb{Q}]
    \]
    where $f(\mathfrak{L}/\ell)$ is the residue degree of the prime $\mathfrak{L}$.
    This proves the desired bound on the $\ell$-adic valuation of $\Delta_{F/\mathbb{Q}}$ and concludes the proof.
\end{proof}

\begin{corollary} \label{cor:bound_root_discriminant}
    Let $ F/\QQ$ be a finite Galois extension as in Lemma \ref{lem:stability_properties_field_extension} and denote by $\delta_{F/\mathbb{Q}}$ its root discriminant. Then we have
    \[
    \delta_{F/\mathbb{Q}} < p^{\frac{\ell-1}{\ell}} \cdot \ell^{\frac{\ell}{\ell-1}}.
    \]
\end{corollary}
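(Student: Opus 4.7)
The plan is to simply combine the two bounds on the local valuations of the discriminant established in part (3) of Lemma \ref{lem:stability_properties_field_extension} with the ramification condition (a). First I would recall that by definition the root discriminant is $\delta_{F/\mathbb{Q}} = |\Delta_{F/\mathbb{Q}}|^{1/[F:\mathbb{Q}]}$, so the task reduces to bounding $|\Delta_{F/\mathbb{Q}}|$.

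Next, I would use condition (a) from Lemma \ref{lem:stability_properties_field_extension}: since $F/\mathbb{Q}$ is unramified outside $\{p,\ell\}$, the discriminant is supported only at $p$ and $\ell$, giving the factorisation
\[
|\Delta_{F/\mathbb{Q}}| = p^{v_p(\Delta_{F/\mathbb{Q}})} \cdot \ell^{v_\ell(\Delta_{F/\mathbb{Q}})}.
\]
Substituting the two bounds from part (3) of the lemma yields
\[
|\Delta_{F/\mathbb{Q}}| \leq p^{\frac{\ell-1}{\ell}[F:\mathbb{Q}]} \cdot \ell^{\frac{\ell}{\ell-1}[F:\mathbb{Q}]},
\]
and taking the $[F:\mathbb{Q}]$-th root gives the desired inequality.

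There is no real obstacle here: the corollary is essentially a reformulation of the valuation bounds in part (3) of Lemma \ref{lem:stability_properties_field_extension}, with the only extra input being that the support of the discriminant is contained in $\{p,\ell\}$, which is immediate from condition (a).
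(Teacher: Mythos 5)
Your proof is correct and is exactly the intended argument: the paper states this corollary without proof as an immediate consequence of part (3) of Lemma \ref{lem:stability_properties_field_extension}, and your derivation (discriminant supported in $\{p,\ell\}$ by condition (a), then the valuation bounds, then taking the $[F:\mathbb{Q}]$-th root) is the same route.
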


The smallest extension $L/\QQ$ containing all the finite Galois extensions enjoying properties $(a)$--$(c)$ (and in particular the field $\QQ(H)$ for any simple object $H$ of $\mathcal{C}_{p,\ell}$) is by definition their compositum, which is then Galois over $\mathbb{Q}$.
A priori, the field $L$ may have infinite degree over $\mathbb{Q}$.
However, in certain cases one can use Corollary \ref{cor:bound_root_discriminant} to show that $[L\colon\mathbb{Q}]$ is finite, and even provide an explicit upper bound on its degree.
Indeed, if the upper bound on the root discriminant provided by Corollary \ref{cor:bound_root_discriminant} is sufficiently small (meaning smaller than 21.78), then comparing it with the discriminant lower bounds first obtained by Odlyzko \cite{Odlyzko_I}, \cite{Odlyzko_II} and then improved upon by Poitou \cite{Poitou_1976} provides an upper bound on the degree of each field $F$ satisfying properties $(a)$--$(c)$, and consequently, an upper bound on $[L\colon\mathbb{Q}]$.

In Subsection \ref{subsec:19_step_1} we choose $p = 19$.
Then the only possible choice for the prime $\ell$ that allows us to apply Poitou's bounds, explicitly appearing in Diaz y Diaz's tables \cite{DiazyDiaz_minorations}, is $\ell=2$.
For $p=19$ the upper bound in Corollary \ref{cor:bound_root_discriminant} becomes $\delta_{F/\mathbb{Q}} < 4\sqrt{19} < 17.44$ which implies, using \cite{DiazyDiaz_minorations}, that  $[L\colon\mathbb{Q}] \leq 137$.

A note on the use of Diaz y Diaz's tables \cite{DiazyDiaz_minorations}: these tables are based on the lower bound \cite[Inequality (26)]{Poitou_1976} which is worst possible when, in the setting of that article, the considered number field is totally imaginary.
Thus our upper bounds on $[L\colon\mathbb{Q}]$ are obtained from looking at \cite[Table 1, cas totalement imaginaire, pp. 1-10]{DiazyDiaz_minorations}.  

From the bound on the degree of the extension $L$, we will use techniques from class field theory and representation theory to pin down $L$ sufficiently well to obtain an explicit number field containing $\QQ(H)$ for any simple object $H$ of $\mathcal{C}_{p,\ell}$.
This leads to a finite list of possibilities for the generic fibre $H_\mathbb{Q}$.

\subsection{Step 2: Prolongations} \label{sec:strategy_prolongations}
As before, in this section $\ell$ denotes a prime, $N$ an integer coprime to $\ell$ and $p$ a prime different to $\ell$.

In the first step of the classification strategy, described in Section \ref{sec:strategy_generic_fibre}, we ideally managed to determine all the possibilities for the generic fibre of a simple object in $\mathcal{C}_{N, \ell}$.
We now need to check whether each of these potential generic fibres are actually the generic fibre of such an object, and if so, of how many up to isomorphism.
To do this, we study the \textit{prolongations} of these group schemes which are defined over $\mathbb{Q}$ to finite flat group schemes over $\mathbb{Z} [\frac{1}{N} ]$.

Indeed, a priori, a prolongation of a given group scheme may not exist and, even if it does, it may not be unique.
For instance, the group scheme defined over $\QQ$ whose associated Galois module is trivial of order 2 admits $\mu_2$ and $\underline{\ZZ/2\ZZ}$ as prolongations to $\mathbb{Z} [\frac{1}{N} ]$.
These prolongations are different since $\mu_2$ is connected over $\mathbb{Z} [\frac{1}{N} ]$ while $\underline{\ZZ/2\ZZ}$ is not.  

On the other hand, a group scheme $G/\QQ$ whose associated module $G(\overline{\QQ})$ is unramified outside of an integer $N$ is a module for the étale fundamental group of $\ZZ[\frac{1}{N}]$.
Hence,  $G$ admits a unique prolongation to an étale group scheme $\textbf{G}/\mathbb{Z}[\frac{1}{N}]$ by the equivalence of categories explained in \cite[pg. 137]{Tate_FLT}.
In the above example $\underline{\ZZ/2\ZZ}$ is the unique étale prolongation of $G$ to $\mathbb{Z} [\frac{1}{N} ]$.

This shows that the generic fibre of any object of $\mathcal{C}_{N,\ell}$ has a unique prolongation to $\ZZ[\frac{1}{\ell N}]$, since any such prolongation is étale \cite[pg. 138 (II)]{Tate_FLT}.
To determine the possible prolongations to $\ZZ[\frac{1}{N}]$ we use the following result due to M. Artin.

\begin{proposition} \label{prop:equivalence_categories_prolongations}
    Let $\mathcal{D}_{N,\ell}$ be the category defined by the following data:
    \begin{itemize}
        \item \textbf{Objects:} triples $(G_1,G_2,\theta)$ where $G_1$ is a finite flat group scheme over $\ZZ_\ell$, $G_2$ is a finite flat group scheme over $\mathbb{Z}[\frac{1}{\ell N}]$ and $\theta: (G_1)_{\mathbb{Q}_\ell} \to (G_2)_{\mathbb{Q}_\ell}$ is an isomorphism of group schemes.
        \item \textbf{Morphisms:} A morphism between two triples $(G_1,G_2,\theta)$ and $(G_1',G_2',\theta')$ is the datum of a pair of morphisms of group schemes $\varphi_i\colon G_i \to G_i'$ for $i\in \{1,2\}$ that make the following diagram
        \begin{center}
        \begin{tikzcd}
(G_1)_{\mathbb{Q}_\ell} \arrow[r, "\theta"] \arrow[d, "(\varphi_1)_{\mathbb{Q}_\ell}"'] & (G_2)_{\mathbb{Q}_\ell} \arrow[d, "(\varphi_2)_{\mathbb{Q}_\ell}"] \\
(G_1')_{\mathbb{Q}_\ell} \arrow[r, "\theta'"]                                         & (G_2')_{\mathbb{Q}_\ell}                                        
\end{tikzcd}
\end{center}
commute.
    \end{itemize} 
Then the functor $\mathcal{C}_{N,\ell} \to \mathcal{D}_{N,\ell}$ sending $G$ to the triple $(G_{\mathbb{Z}_\ell}, G_{\mathbb{Z}[\frac{1}{\ell N}]}, \mathrm{id}_{\mathbb{Q}_\ell})$ is an equivalence of categories.
\end{proposition}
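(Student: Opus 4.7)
The plan is to prove this by Artin-style glueing along the faithfully flat cover $\Spec(\mathbb{Z}_2) \sqcup \Spec(\mathbb{Z}[\frac{1}{2p}]) \to \Spec(\mathbb{Z}[\frac{1}{p}])$. The crucial input is that the square
\begin{center}
\begin{tikzcd}
\mathbb{Z}[\frac{1}{p}] \arrow[r] \arrow[d] & \mathbb{Z}_2 \arrow[d] \\
\mathbb{Z}[\frac{1}{2p}] \arrow[r] & \mathbb{Q}_2
\end{tikzcd}
\end{center}
is cartesian — concretely, $\mathbb{Z}[\frac{1}{p}] = \mathbb{Z}_2 \cap \mathbb{Z}[\frac{1}{2p}]$ inside $\mathbb{Q}_2$ — and that the cover itself is faithfully flat, since the maximal ideal $(2)$ of $\mathbb{Z}[\frac{1}{p}]$ is covered by $\Spec(\mathbb{Z}_2)$ and every other maximal ideal by $\Spec(\mathbb{Z}[\frac{1}{2p}])$.

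First I would construct a quasi-inverse functor $\mathcal{D}_p \to \mathcal{C}_p$ by glueing Hopf algebras. Given a triple $(G_1, G_2, \theta)$ with $G_i = \Spec(A_i)$, use $\theta^{\sharp}$ to identify $A_1 \otimes_{\mathbb{Z}_2} \mathbb{Q}_2$ with $A_2 \otimes_{\mathbb{Z}[\frac{1}{2p}]} \mathbb{Q}_2$ as Hopf algebras over $\mathbb{Q}_2$, and set
\[
A := A_1 \times_{A_1 \otimes \mathbb{Q}_2} A_2.
\]
All Hopf operations on $A_1$ and $A_2$ are compatible under $\theta$, so the universal property of the pullback equips $A$ with a Hopf algebra structure over $\mathbb{Z}[\frac{1}{p}]$.

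The main technical step — and what I expect to be the principal obstacle — is to verify that $A$ is finite flat over $\mathbb{Z}[\frac{1}{p}]$ and that the canonical maps $A \otimes_{\mathbb{Z}[\frac{1}{p}]} \mathbb{Z}_2 \to A_1$ and $A \otimes_{\mathbb{Z}[\frac{1}{p}]} \mathbb{Z}[\frac{1}{2p}] \to A_2$ are isomorphisms. This is an instance of Beauville--Laszlo / Ferrand-style descent for finite locally free modules: given the cartesian square above and the faithful flatness of the cover, a pair of finite locally free modules over the two factors together with an isomorphism of their base changes to $\mathbb{Q}_2$ glues uniquely to a finite locally free module over $\mathbb{Z}[\frac{1}{p}]$ of the expected rank, and every such module arises in this way. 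One either appeals to this descent statement directly or verifies it by hand using that $\mathbb{Z}[\frac{1}{p}]$ is the fibre product and that all maps in sight are flat.

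Once the descent statement is in place, essential surjectivity of the original functor is immediate from the construction. Full faithfulness then follows from the Mayer--Vietoris property of the cartesian square: a morphism of Hopf algebras over $\mathbb{Z}[\frac{1}{p}]$ is the same as a pair of compatible Hopf algebra morphisms over $\mathbb{Z}_2$ and $\mathbb{Z}[\frac{1}{2p}]$ whose base changes to $\mathbb{Q}_2$ agree, which is precisely a morphism in $\mathcal{D}_p$. Finally, to check that $\Spec(A) \in \mathcal{C}_p$, note that the $2$-power order is inherited from $A_1$, while the inertia condition at $p$ involves only the generic fibre $A \otimes \mathbb{Q} = A_2 \otimes \mathbb{Q}$, so both conditions can be read directly off the triple.
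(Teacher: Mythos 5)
Your outline is correct and is exactly the argument underlying the result: the paper gives no proof of its own, simply citing Schoof [Prop.~2.3 of \emph{Abelian varieties over cyclotomic fields}] (a result it attributes to M.~Artin), and that result is established by precisely the Beauville--Laszlo-type glueing of finite locally free (Hopf) algebras along the cartesian square $\mathbb{Z}[\tfrac{1}{p}] = \mathbb{Z}_2 \times_{\mathbb{Q}_2} \mathbb{Z}[\tfrac{1}{2p}]$ that you describe, with the module-theoretic descent statement as the one genuine technical input, which you have correctly identified. The only caveat concerns your final step: as literally stated, $\mathcal{D}_p$ imposes no $2$-power-order or inertia condition on its objects, so the functor cannot be essentially surjective onto all of $\mathcal{D}_p$ and one should tacitly restrict $\mathcal{D}_p$ to triples satisfying those conditions (which, as you note, can be read off the triple) --- but that is an imprecision in the proposition's statement rather than a gap in your argument.
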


\begin{proof}
    This is a special case of \cite[Prop. 2.3]{Schoof_cyclotomic}.
\end{proof}

Given an object $G$ of $\mathcal{C}_p$ such that $G_\QQ$  admits a unique prolongation $G_1/\mathbb{Z}_2$ of $G_{\mathbb{Q}_2}$, and $G_2/\ZZ[\frac{1}{2p}]$ of $G_\QQ$, this proposition implies the  prolongations of $G_\QQ$ to $\mathbb{Z}[\frac{1}{p}]$ are obtained by gluing along isomorphisms $G_1 \times_{\ZZ_2} \Spec(\mathbb{Q}_2) \cong G_2 \times_{\ZZ[\frac{1}{p}]} \Spec(\mathbb{Q}_2)$.

Prolongations of prime order group schemes can be completely classified thanks to the work of Oort and Tate \cite{Oort_Tate}.
Certifying the existence of prolongations for group schemes of non-prime order usually consists in using `explicit' examples.
For instance, the 2-torsion group scheme of a semistable abelian variety with good reduction outside of $p$ may admit subgroup schemes or quotients that prolong one of the group schemes over $\mathbb{Q}$ found in Section \ref{sec:strategy_generic_fibre}.
The game then becomes to show, if possible, that these are the only possible prolongations.

We shall address the question of unicity of prolongations in a more general setting.
Let $R$ be a Dedekind domain with field of fractions $K$ of characteristic 0.
If $G$ is a finite flat group scheme over $K$ and $G_1, G_2$ are two prolongations of $G$ to $R$, we write $G_1 \geq G_2$ if there exists a morphism of group schemes $G_1 \to G_2$ prolonging the identity of $G$.
In the same way as in \cite[Section 2.2]{Raynaud_type_p}, the relation $\geq$ defines a partial order on the set of prolongations of $G$.

\begin{proposition} \label{prop:automorphisms_maximal_prolongation}
    Let $R$ be a Dedekind domain with field of fractions $K$ of characteristic zero.
    Let $G$ be a finite flat group scheme over $K$ and suppose $G$ has a prolongation to $R$.
    Then:
    \begin{enumerate}
        \item The group scheme $G$ admits prolongations $G^+$ and $G^-$ to $R$ that are respectively a maximum and a minimum for the partial order $\geq$;
        \item Any automorphism of $G$ extends uniquely to $G^+$ and $G^-$.
    \end{enumerate}
\end{proposition}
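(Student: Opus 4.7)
The plan is to follow Raynaud's Hopf-algebraic viewpoint from \cite[Section 2.2]{Raynaud_type_p}. Write $G = \Spec A$; since $\mathrm{char}(K) = 0$, $A$ is finite étale over $K$, so the integral closure $\widetilde R$ of $R$ in $A$ is a finite $R$-module. I would identify every prolongation $G_i = \Spec A_i$ of $G$ with a Hopf $R$-subalgebra $A_i \subseteq \widetilde R$ that is finite over $R$ and satisfies $A_i \otimes_R K = A$, so that $G_i \geq G_j$ is equivalent to $A_j \subseteq A_i$. The crucial consequence is that all prolongations live inside the single Noetherian $R$-module $\widetilde R$.

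Given two prolongations $G_1, G_2$, I would build their join $G_1 \vee G_2$ as the scheme-theoretic closure of the diagonal $\Delta_G \subset G \times_K G$ inside $G_1 \times_R G_2$. On Hopf algebras this is the image of the multiplication $A_1 \otimes_R A_2 \to A$, namely the compositum $A_1 \cdot A_2 \subseteq \widetilde R$: it is $R$-flat as a torsion-free submodule of $A$ over a Dedekind domain, finite over $R$ (generated by products of module generators), and closed under comultiplication, counit, and antipode since each $A_i$ is. The inclusions $A_1, A_2 \subseteq A_1 \cdot A_2$ show that $G_1 \vee G_2$ dominates both factors. For the minimum I would appeal to Cartier duality: $G \mapsto G^*$ is an involution on finite flat group schemes that sends prolongations to prolongations and reverses $\geq$, so it is enough to produce the maximum and then set $G^- \coloneqq ((G^*)^+)^*$.

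To produce $G^+$, the key input is Noetherianity of $\widetilde R$. Any strictly ascending chain of iterated joins $A_{i_1} \subseteq A_{i_1} \cdot A_{i_2} \subseteq \cdots$ lies in $\widetilde R$ and so stabilises; a terminal value $A^+ = A_{i_1}\cdots A_{i_n}$ is then an upper bound for every $A_j$ (otherwise $A^+ \cdot A_j$ would strictly enlarge $A^+$), yielding the maximum $G^+$. For part (2), given an automorphism $\varphi$ of $G$, I would form the twisted prolongation $(G^+)^\varphi$ obtained by re-identifying the generic fibre of $G^+$ with $G$ via $\varphi$. Maximality supplies a morphism $G^+ \to (G^+)^\varphi$ over $R$ extending the identity on the generic fibre, which unravels to a morphism $\widetilde \varphi \colon G^+ \to G^+$ of $R$-schemes whose generic fibre is $\varphi$ (or $\varphi^{-1}$, depending on convention). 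Feeding $\varphi^{-1}$ into the same machine yields an inverse, and uniqueness of $\widetilde\varphi$ is automatic because $G^+$ is $R$-flat, so the Hopf algebra of $G^+$ injects into $A$ and a morphism is determined by its generic fibre. The dual argument (or Cartier duality) handles $G^-$.

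The main obstacle I anticipate is realising that the collection of prolongations is bounded inside a single \emph{finite} $R$-module. Once one observes that every prolongation is integral over $R$ and therefore embeds in $\widetilde R$, the remainder is essentially formal: the join construction, the ascending-chain argument delivering $G^+$, the Cartier duality step delivering $G^-$, and the universal-property argument extending automorphisms to both extremes.
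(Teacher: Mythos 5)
Your proposal is correct and follows essentially the same route as the paper: both realise every prolongation as a Hopf $R$-order sandwiched inside the integral closure of $R$ in $A$, observe that the product of two Hopf orders is again a Hopf order dominating both, extract a maximal one, obtain the minimum by Cartier duality, and extend automorphisms using maximality of $A^+$ together with flatness for uniqueness. The only cosmetic difference is how the maximum is extracted — the paper minimises the index ideal $[A_R:\mathcal{O}]$ among Hopf orders, whereas you invoke the ascending chain condition on submodules of the finite $R$-module $\widetilde R$ — and these are interchangeable.
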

\begin{remark}
    Raynaud \cite{Raynaud_type_p} proves this result only under the assumption that $R$ is a discrete valuation ring.
    The generalisation stated above is straightforward, but we provide a proof for lack of a suitable reference.
\end{remark}
\begin{proof}
Write $G=\mathrm{Spec}(A)$ where $A$ is an Hopf algebra over $K$.
Since $A$ is a separable commutative finite dimensional algebra over $K$, the integral closure $A_R$ of $R$ in $A$ is an $R$-order by \cite[Thm. 10.3]{Reiner_book_1975}.
By assumption on the existence of a prolongation of $G$ to $R$, there exists at least one Hopf $R$-order in $A$, and every such order $\mathcal{O}$ is contained in $A_R$.
We denote by $[A_R:\mathcal{O}]$ the index module of $\mathcal{O}$ in $A_R$.
It is an integral ideal of $R$ by \cite[Prop. 1 (ii), pg. 10]{Frohlich}.

We now prove (i).
Let $A_0$ be a Hopf $R$-order of $A$ whose index $[A_R:A_0]$ is minimal in the divisibility relations among integral ideals of $R$.
Then every other Hopf $R$-order $\mathcal{O}\subseteq A$ is contained in $A_0$, since otherwise the $R$-algebra generated by $A_0$ and $\mathcal{O}$ would be a Hopf order (see the argument in \cite[Prop. 4.1.1]{Tate_FLT}) with smaller index in $A_R$ by \cite[Prop. 1, pg. 10]{Frohlich}.
The group scheme $\mathrm{Spec}(A_0)$ is then the sought maximal extension.
The statement for the minimal extension follows from Cartier duality.

To prove (ii) notice that every automorphism $\alpha$ of $G$ is induced by a Hopf-algebra automorphism $\varphi \in \mathrm{Aut}(A)$.
The automorphism $\varphi$ restricts to an automorphism of the maximal Hopf $R$-order $A_0$ of $A$, since $\varphi(A_0)$ and $\varphi^{-1}(A_0)$ must both be Hopf $R$-orders, hence contained in $A_0$.
We deduce that $\alpha$ extends to an automorphism of $G^+$ and the analogous statement for $G^-$ follows from Cartier duality.
The uniqueness of the extension follows from flatness, see Proposition \ref{prop:unique_extension_of_homs}.
\end{proof}

\begin{remark}
    Automorphisms need not extend to arbitrary prolongations.
    For example, $(\underline{\ZZ/2\ZZ} \times \mu_2) /\ZZ$ is a prolongation of $(\underline{\ZZ/2\ZZ})^2  /\QQ$ to $\ZZ$.
    However $(\underline{\ZZ/2\ZZ} \times \mu_2) /\ZZ$ has subgroup schemes of order two which are not isomorphic and thus its automorphism group is smaller than $\GL_2(\FF_2)$, whereas the automorphism group of $(\underline{\ZZ/2\ZZ})^2 /\QQ$ is isomorphic to $\GL_2(\FF_2)$.

    Note that $(\underline{\ZZ/2\ZZ})^2 /\ZZ$ is the maximal prolongation of $(\underline{\ZZ/2\ZZ})^2  /\QQ$ to $\ZZ$, and by Cartier duality $\mu_2^2 /\ZZ$ is the minimal prolongation. 
\end{remark}

We now focus on the case where $R=\mathbb{Z}_\ell$ for a prime $\ell$ and $G$ is a finite flat group scheme over $\mathbb{Q}_\ell$ annihilated by $\ell$ admitting at least one finite flat prolongation $\mathbf{G}$ over $\mathbb{Z}_\ell$.
If $\ell>2$, the group scheme $\mathbf{G}$ is, up to isomorphism, the unique prolongation of $G$ by \cite[Thm. 3.3.3] {Raynaud_type_p}.
On the other hand, if $\ell=2$ this may not be the case, as one can see be considering $\mu_2$ and $\underline{\ZZ/2\ZZ}$ over $\ZZ_2$ where they are not isomorphic and over $\QQ_2$ where they become isomorphic.
Nonetheless, we now present a condition due to Raynaud which also guarantees the uniqueness of a prolongation from $\QQ_2$ to $\ZZ_2$.

Since $\mathbb{Z}_\ell$ is a complete noetherian local ring, we have a connected-étale exact sequence \cite[pg. 138 (I)]{Tate_FLT}
\[
0 \to \mathbf{G}^0 \to \mathbf{G}\to \mathbf{G}_{\mathrm{\acute{e}t}} \to 0
\]
where $\mathbf{G}^0$ is the connected component of $\mathbf{G}$ and $\mathbf{G}_{\mathrm{\acute{e}t}}$ is its maximal étale quotient.
The \textit{biconnected} component of $\mathbf{G}$ is defined as $\mathbf{G}_{\mathrm{bi}}\coloneqq \mathbf{G}^0/\mathbf{G}_{\mathrm{mul}}$, where $\mathbf{G}_{\mathrm{mul}}$ denotes the biggest subgroup of multiplicative type (\textit{i.e.} admitting an étale Cartier dual) of $\mathbf{G}^0$.

It follows from \cite[Prop. 3.3.2 (3)]{Raynaud_type_p} (see also Section 3.3.5 in the same article) that if $\mathbf{G}$ and $\mathbf{G}'$ are two prolongations of $G$ to $\mathbb{Z}_\ell$ then there is a natural isomorphism $\mathbf{G}_{\mathrm{bi}} \cong \mathbf{G}_{\mathrm{bi}}'$.
This readily implies the following useful result.

\begin{proposition} \label{prop:unique_biconnected}
    Let $\ell$ be a prime number and $G$ a finite flat group scheme over $\mathbb{Q}_\ell$ annihilated by $\ell$, admitting at least one prolongation over $\mathbb{Z}_\ell$ which is connected with connected dual.
    Then this is the only prolongation of $G$ to $\mathbb{Z}_\ell$ up to isomorphism.
\end{proposition}

\begin{proof}
    A group scheme over $\mathbb{Z}_\ell$ that is connected and has connected dual is equal to its own biconnected component. By the discussion above, any two prolongations have isomorphic biconnected components. The result then follows from order considerations.
\end{proof}

\subsubsection{Group schemes of prime order in $\mathcal{C}_{N, \ell}$}
\label{sec:group_schemes_prime_order}
Let us put some of the theory we have developed so far into practice and classify the group schemes of order $\ell$ in $\mathcal{C}_{N, \ell}$.

\begin{proposition}
\label{prop:grp_schms_prime_order}
    Let $G$ be an object in $\mathcal{C}_{N, \ell}$ of order $\ell$.
    Then $G \cong \ZnZconst{\ell}$ or $\mu_\ell$.
\end{proposition}

\begin{proof}
    Let $p|N$ be a prime.
    We begin by showing $G(\overline{\QQ})$ is unramified at $p$.
    Let $\sigma \in I_p$ and $H \coloneqq (\sigma -1)(G(\overline{\QQ}))$.
    As $(\sigma-1)^2=0$, we see $\sigma$ fixes $H$.
    As $G(\overline{\QQ})$ is irreducible, either $H=0$ or $H=G(\overline{\QQ})$.
    In either case, we find that $\sigma$ fixes $G(\overline{\QQ})$.
    Thus $G(\overline{\QQ})$ is unramified outside of $\ell$.
    In particular, there exists a unique étale prolongation $G'$ of $G_\QQ$ to $\ZZ[\frac{1}{\ell}]$ \cite[pg. 137]{Tate_FLT}.
    Moreover, any prolongation of $G_\QQ$ to $\ZZ[\frac{1}{\ell}]$ is étale \cite[pg. 138, (II)]{Tate_FLT}.
    Thus $G'$ is the unique prolongation of $G_\QQ$ to $\ZZ[\frac{1}{\ell}]$.

    The action of $\Gamma_\QQ$ on $G(\overline{\QQ})$ furnishes us with a character $\Gamma_\QQ \rightarrow \FF_\ell^*$.
    Thus the Kronecker-Weber Theorem implies $\QQ(G) \subseteq \QQ(\zeta_\ell)$.
    In particular, either $\QQ(G)=\QQ$ or $\QQ(G)$ is totally ramified at $\ell$.
    Thus by \cite[Cor. 3.4.4]{Raynaud_type_p} we have either $G_{\ZZ_\ell} \cong \ZnZconst{\ell}$ or $\mu_\ell$.
    In the case that $G_{\ZZ_\ell} \cong \ZnZconst{\ell}$, we have $\QQ(G) = \QQ$ and thus $G' \cong \ZnZconst{\ell}$ by étaleness.
    Suppose instead that $G_{\ZZ_\ell} \cong \mu_{\ell}$.
    The Galois module $G'(\overline{\QQ})$ and hence $G'$, by étaleness, is determined by the action of inertia at $\ell$, since any extension of $\QQ$ is generated by its inertia groups.
    Thus to determine $G'$ is suffices to determine $G'_{\QQ_\ell}$.
    However, as $G'$ is a prolongation of $G_\QQ$, we have $G'_{\QQ_\ell} \cong G_{\QQ_\ell} = (G_{\ZZ_\ell})_{\QQ_\ell} \cong \mu_\ell$.
    Thus $G' \cong \mu_\ell$.

    The above argument shows, in either case, the existence of an isomorphism $\theta$ making the triple $(G_{\ZZ_\ell}, G', \theta)$ into an object of the category $\mathcal{D}_{1,\ell}$ defined in Proposition \ref{prop:equivalence_categories_prolongations}.
    Using that $G_{\QQ_\ell}$ is étale, we see that $\Aut(G_{\QQ_\ell}) \cong \FF_\ell^*$ with the action of $a \in \FF_\ell^*$ on $G_{\QQ_\ell}$ being multiplication by $a$.
    Abusing notation and letting $a \in \FF_\ell^*$ also denote multiplication by $a$ on $G_{\QQ_\ell}$, we find that $(G_{\ZZ_\ell}, G', a\theta)$ is also an object in the category $\mathcal{D}_{1,\ell}$.

    We shall now show that for any $a \in \FF_\ell^*$, the objects $(G_{\ZZ_\ell}, G', \theta)$ and $(G_{\ZZ_\ell}, G', a\theta)$ are isomorphic.
    As $G'$ is étale, we see that $\Aut(G') \cong \FF_\ell^*$ with the action of $a \in \FF_\ell^*$ on $G'$ being multiplication by $a$.
    Thus we see the pair $(\mathrm{id}_{G_{\ZZ_\ell}},a \cdot \mathrm{id}_{G'})$ defines a morphism from $(G_{\ZZ_\ell}, G', \theta)$ to $(G_{\ZZ_\ell}, G', a\theta)$ with inverse $(\mathrm{id}_{G_{\ZZ_\ell}},a^{-1} \cdot \mathrm{id}_{G'})$.
    
    As the above exhaust all possible objects $G$ can give rise to in $\mathcal{D}_\ell$, it follows from Proposition \ref{prop:equivalence_categories_prolongations} that either $G \cong \ZnZconst{\ell}$ or $\mu_\ell$.
\end{proof}

Let us underline the case $\ell =2$, in which case the category $\mathcal{C}_N$ may be replaced with the category of all finite flat group schemes defined over $\ZZ[\frac{1}{N}]$.

\begin{proposition}
\label{prop:grp_schms_order_2}
    Let $N$ be an odd integer.
    Then any finite flat group scheme over $\ZZ[\frac{1}{N}] $ of order $2$ is isomorphic to either $\ZnZconst{2}$ or $\mu_2$. 
\end{proposition}

\begin{proof}
    This can be proved via an elementary calculation using Hopf algebras.
    It is set as an exercise in \cite[pg. 19, Ex. 11]{Waterhouse_group_schemes_book} and also carried out in \cite[pg. 133, (3.2)]{Tate_FLT}.
    
    Alternatively, we may deduce the result from Proposition \ref{prop:grp_schms_prime_order}.
    Indeed, suppose $G/\ZZ[\frac{1}{N}]$ is a finite flat group scheme of order 2.
    Then $G(\overline{\QQ}) \cong \FF_2$ as abstract groups, thus the action of $\Gamma_\QQ$ on $G(\overline{\QQ})$ factors through $\FF_2^*$, so it acts trivially.
    Hence $G$ is an object of $\mathcal{C}_{N}$.
\end{proof}

Let us note that Proposition \ref{prop:grp_schms_prime_order} does not generalise to this larger category for arbitrary primes $\ell$ as the following example shows.

\begin{example}
    Let $E$ denote the elliptic curve defined by $y^2+y=x^3-3x-5$, which has conductor $3^2\cdot 11$ and thus is not semistable at 3.
    The point $P = (1,3\zeta_3+1) \in E(\overline{\QQ})$ has order 5.
    As $\overline{\zeta}_3 = -(1+ \zeta_3)$, we see $P$ is Galois conjugate to $-P = (1, -3\zeta_3-2)$.
    In particular, the subgroup $\langle P\rangle $ is preserved by the étale fundamental group of $\ZZ[\frac{1}{3}]$ and hence corresponds to a finite flat group scheme $G/\ZZ[\frac{1}{3}]$ of order 5.
    Since $\QQ(G) = \QQ(\zeta_3)$, it is clear that $G$ is not isomorphic to either of $\ZnZconst{5}$ or $\mu_5$.
    
    This is in accordance with Proposition \ref{prop:grp_schms_prime_order}, since for $\sigma \in \Gamma_\QQ \setminus \Gamma_{\QQ(\zeta_3)}$, we have  $(\sigma-1)^2(P)=4P\neq 0$ and thus $G$ does not belong to $\mathcal{C}_{3,5}$.
\end{example}

In fact, we have the more general counterexample:

\begin{example}
    Let $\ell$ be an odd prime and $d$ a squarefree integer.
    The discriminant of $\QQ(\sqrt{d})$ equals $d$ if $d \equiv 1 \mod{4}$ and $4d$ otherwise.
    Thus the Galois group $\Gamma  \coloneqq \Gal(\QQ(\sqrt{d})/\QQ)$ is a quotient of the étale fundamental group of $\ZZ[\frac{1}{2d}]$.
    
    The abelian group $\ZZ/\ell\ZZ$ has automorphism group $(\ZZ/\ell \ZZ)^*$.
    In particular via the isomorphism of abstract groups $\Gamma \cong \{\pm 1\} \leq (\ZZ/\ell\ZZ)^*$, we obtain a non-trivial action of the étale fundamental group of $\ZZ[\frac{1}{2d}]$ on $\ZZ/\ell\ZZ$.
    Thus by the equivalence of categories, there exists an étale group scheme of order $\ell$ defined over $\ZZ[\frac{1}{2d}]$ with field of points $\QQ(\sqrt{d})$.
    
    As before, this is in accordance with Proposition \ref{prop:grp_schms_prime_order}, since for $\sigma \in \Gamma_\QQ \setminus \Gamma_{\QQ(\sqrt{d})}$ and $P$ a non-zero point of the group scheme $G_\ell$ constructed above, we have  $(\sigma-1)^2(P)=4P\neq 0$ and thus $G_\ell$ does not belong to $\mathcal{C}_{2d,\ell}$.
\end{example}

For further discussions on group schemes of prime order, we refer the reader to \cite{Oort_Tate}.

\subsection{Step 3: Extensions}
\label{subsection:extensions}
In this section we build up some general background on classifying extension of group schemes by one another.
Extensions are best understood through the tools of homological algebra.
However, as seen in Example \ref{example:closedbutnotflatsubgroupscheme}, the category $\mathcal{C}_p$ is not abelian.
Thus we cannot a priori use results from homological algebra in our setting.

To remedy this, we shall first realise $\mathcal{C}_p$ as a full subcategory of a certain abelian category $\mathcal{D}$.
We then show any extension in $\mathcal{D}$ of objects coming from $\mathcal{C}_p$ in fact belongs to the essential image of $\mathcal{C}_p$.
This strategy works more generally for the category of all finite flat group schemes in place of $\mathcal{C}_p$.
Hence, we shall work directly with this category instead, and then easily deduce the results relevant to our setting.

Let $S$ be a locally noetherian scheme.
Let $(\mathrm{Sch}/S)$ be the category of schemes over $S$, $(\mathrm{FFGS}/S)$ the category of finite flat group schemes over $S$ and $(\mathrm{Ab})$ the category of abelian groups.

We equip $(\mathrm{Sch}/S)$ with the fppf topology \cite[Example 2.32]{Vistoli_FGA}, that is, the Grothen\-dieck topology  whose coverings $\{U_i \rightarrow U\}$ consist of a jointly surjective family of flat morphisms locally of finite presentation.
We denote by $\mathrm{AbSh(fppf}/S)$ the category of abelian sheaves on $(\mathrm{Sch}/S)$ with the fppf topology.

Let $G$ be a finite flat group scheme over $S$.
Define the contravariant functor\footnote{We shall ignore set-theoretic issues.
These can be dealt with using universes.
The concerned reader may consult \cite{SGA4_tome_1}.}
\[h^G \colon (\mathrm{Sch}/S) \rightarrow (\mathrm{Ab}), \hspace{0.7cm} U \mapsto \Hom_S(U,G) = G(U).\]
The functor $h^G$ is a sheaf for the fppf topology \cite[Thm. 2.55]{Vistoli_FGA}.
The above association in turn gives rise to a functor \[h \colon (\mathrm{FFGS}/S) \rightarrow \mathrm{AbSh(fppf}/S) \text{ given  by  }  G \mapsto h^G.\]
Yoneda's Lemma implies $h$ is fully faithful, thus realising $(\mathrm{FFGS}/S) $ as a full subcategory of $\mathrm{AbSh(fppf}/S)$ (see \cite[pg. 95]{BLR_book_1990} for more details).
In particular $h^G \cong h^H$ if and only if $G \cong H$.

The category $\mathrm{AbSh}(\mathrm{fppf}/S)$ is an abelian category \cite[Thm. I.3.2.1]{Tamme_etale_cohomology}.
In particular, $\mathrm{AbSh}(\mathrm{fppf}/S)$ has kernels and cokernels, and one can define the notion of a short exact sequence as in any other abelian category.
In the specific case of abelian sheaves, this definition reduces to the following one.

\begin{definition}\label{def:exact_sequence_sheaves}
    Let $\mathcal{F},\mathcal{G}$ and $\mathcal{H}$ be objects of $\mathrm{AbSh}(\mathrm{fppf}/S)$. The sequence
    \[
    0 \to \mathcal{F} \xrightarrow[]{\iota} \mathcal{G} \xrightarrow[]{p} \mathcal{H} \to 0
    \]
    is said to be \textit{exact} if the following holds:
    \begin{enumerate}
        \item The morphism $\iota$ induces a sheaf isomorphism $\mathcal{F} \cong \ker(p)$;
        \item The morphism $p$ induces a surjective map of sheaves, i.e., for every object $U$ in $(\mathrm{Sch}/S)$ and every $h\in \mathcal{H}(U)$ there exists an fppf-covering $\{V_i \to U\}$ and elements $g_i \in \mathcal{G}(V_i)$ for all $i$ such that $p_{V_i}(g_i)=h|_{V_i}\in \mathcal{H}(V_i)$.
    \end{enumerate}
\end{definition}

An exact sequence of finite flat group schemes as in Definition \ref{def:exact_sequence_group_schemes} gives rise to an exact sequence of abelian sheaves, via $h$, as the following lemma shows.

\begin{lemma}
    Let 
    \[
    0\to F \xrightarrow[]{\iota} G \xrightarrow[]{p} H \to 0
    \]
    be an exact sequence of finite and flat group schemes over $S$. Then the associated sequence of abelian sheaves
    \[
    0 \to h^F \to h^G \to h^H \to 0
    \]
    is exact in $\mathrm{AbSh}(\mathrm{fppf}/S)$.
\end{lemma}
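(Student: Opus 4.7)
The plan is to verify the two conditions of \Cref{def:exact_sequence_sheaves} in turn, using the characterisation of exactness of the original sequence of group schemes given in \Cref{remark:sheaf_injectivity_same_as_closed_embedding} combined with the fact that faithfully flat morphisms of finite flat group schemes give rise to fppf coverings.

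First I would address condition (1), namely that $h^F$ is identified with $\ker(h^p)$ as abelian sheaves. Since the kernel in $\mathrm{AbSh}(\mathrm{fppf}/S)$ is computed objectwise, I need only check that for every $S$-scheme $A$ the natural map $F(A)\to \ker\bigl(G(A)\to H(A)\bigr)$ is a bijection. This is precisely the content of \Cref{remark:sheaf_injectivity_same_as_closed_embedding}: exactness of the group scheme sequence entails left exactness of the sequence of $A$-points for every $A\in(\mathrm{Sch}/S)$. So (1) follows at once, and in particular $h^F\to h^G$ is a monomorphism.

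For condition (2), I would argue as follows. Fix an $S$-scheme $U$ and an element $h\in H(U)$, which I view as a morphism $h\colon U\to H$ over $S$. Form the fibre product $V\coloneqq U\times_H G$ with its two projections $q\colon V\to U$ and $g\colon V\to G$; by construction $p\circ g = h\circ q$, so $g\in G(V)$ satisfies $p_V(g) = h|_V\in H(V)$. It therefore suffices to show that $\{q\colon V\to U\}$ is an fppf covering. The map $p\colon G\to H$ is faithfully flat by hypothesis, and finite since it is a morphism between finite $S$-schemes; as $S$ is locally noetherian, finite morphisms are automatically of finite presentation. These three properties are preserved under arbitrary base change, so $q\colon V\to U$ is surjective, flat, and locally of finite presentation, which is exactly what is required of an fppf covering.

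Combining the two parts yields exactness of $0\to h^F\to h^G\to h^H\to 0$ in $\mathrm{AbSh}(\mathrm{fppf}/S)$. The only mildly delicate point is the second: one must be a little careful to spell out that faithful flatness of the single morphism $p$ translates, after base change, into an honest fppf covering of $U$ by the single scheme $V$, and that the universal property of the pullback furnishes the required local lift of $h$. Everything else is essentially a repackaging of the definitions via \Cref{remark:sheaf_injectivity_same_as_closed_embedding}.
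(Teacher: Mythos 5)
Your proposal is correct and follows essentially the same route as the paper's proof: condition (1) is read off from Remark \ref{remark:sheaf_injectivity_same_as_closed_embedding}, and condition (2) is obtained by base changing $p$ along $h\colon U\to H$ and observing that the resulting single-morphism family $\{U\times_H G\to U\}$ is an fppf covering with the canonical projection to $G$ as the local lift. The only cosmetic difference is that you deduce local finite presentation of $p$ from finiteness of $p$ over a locally noetherian base, whereas the paper uses cancellation from the finite presentation of $G\to S$ and $H\to S$; both are valid.
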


\begin{proof}
    Condition (1) in Definition \ref{def:exact_sequence_sheaves} readily follows from condition (1) in Definition \ref{def:exact_sequence_group_schemes}.
    As for condition (2), let $U$ be an $S$-scheme and let $f\in h^H(U)=\mathrm{Hom}(U,H)$.
    Using $f$, we take the base change of $p \colon G \rightarrow H$ to $p_U \colon U\times_H G \rightarrow U \times _H H = U$, giving us an fppf-covering $\{U\times_H G \xrightarrow[]{p_U} U\}$.
    
.
    
    Taking the pullback of $f$ provides us with an element $f\circ p_U \in h^H(U\times_H G)$.
    On the other hand, the commutativity of the diagram satisfied by the fibre product implies $f \circ p_U = p \circ \pi_G$, where $\pi_G$ is the canonical morphism $U\times_H G \to G$.
    In particular, the image of $\pi_G \in h^G(U \times _H G)$ under the map $h^G \rightarrow h^H $ induced by $p$ maps to $f\circ p_U = f|_{h^H(U\times_H G)} \in h^H(U\times_H G)$.
    This shows that  condition (2) in Definition \ref{def:exact_sequence_sheaves} also holds for our sequence of sheaves which is then exact, as we wanted to show.
\end{proof}

The category $\mathrm{AbSh}(\mathrm{fppf}/S)$ has enough injectives \cite[Cor. I.3.2.2]{Tamme_etale_cohomology} thus for $n\in \mathbb{N}$ one can define the usual (group) bifunctors $\mathrm{Ext}^n(-,-)$ by applying the theory of right derived functors to the bifunctor $\mathrm{Hom}(-,-)$.
We will be mainly interested in the case $n=1$ in which, for every pair of objects $\mathcal{F}, \mathcal{H} \in \mathrm{AbSh}(\mathrm{fppf}/S)$, the elements of $\mathrm{Ext}^1(\mathcal{H},\mathcal{F})$ can be explicitly described as equivalence classes of exact sequences 
\[
0 \to \mathcal{F} \to \mathcal{G} \to \mathcal{H} \to 0
\]
under the natural equivalence relation (see \cite[\href{https://stacks.math.columbia.edu/tag/010I}{Tag 010I}]{stacks-project}).
If both $\mathcal{F}$ and $\mathcal{H}$ are representable respectively by finite flat group schemes $F$ and $H$, we will simply write $\mathrm{Ext}^1(H,F)$ in place of $\mathrm{Ext}^1(h^H,h^F)$.
In fact, in order to emphasise the dependence on the base, we shall often write $\Ext^1_S(H,F)$ for finite flat group schemes $H$ and $F$ over $S$.

This notation suggests that extensions of finite flat group schemes can be described not only as exact sequences of abelian sheaves but also as exact sequences of group schemes themselves.
This is indeed true, as is proved in the following proposition.

\begin{proposition}
    Let $H,F$ be group schemes over $S$ with $F$ affine over $S$.
    Suppose
    \[0  \rightarrow h^F \rightarrow \mathcal{G}  \xrightarrow{\psi} h^H \rightarrow 0\]
    is an exact sequence of abelian sheaves for the fppf topology over $S$.
    Then $\mathcal{G}$ is representable.

    Moreover, if $H,F$ are finite and flat over $S$, then so is $G$, the scheme representing $\mathcal{G}$.
    Furthermore, the above exact sequence of sheaves induces an exact sequence of group schemes
    \[0 \rightarrow F \rightarrow G \rightarrow H \rightarrow 0.  \]
\end{proposition}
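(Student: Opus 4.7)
The plan is to realise $\mathcal{G} \to h^H$ as a torsor under $h^F$ for the fppf topology and then invoke effectivity of fppf descent along affine morphisms to obtain representability. First, the abelian-sheaf inclusion $h^F \hookrightarrow \mathcal{G}$ induces an action of $h^F$ on $\mathcal{G}$ over $h^H$, and exactness of the sequence of sheaves translates into the shearing map
\[
h^F \times \mathcal{G} \longrightarrow \mathcal{G} \times_{h^H} \mathcal{G}, \qquad (f,g) \longmapsto (g,\, g+f),
\]
being an isomorphism. Applying fppf-surjectivity of $\psi$ to the canonical element $\mathrm{id}_H \in h^H(H)$ produces an fppf covering $\pi\colon U \to H$ together with an element $g \in \mathcal{G}(U)$ satisfying $\psi_U(g) = \pi$; this section trivialises the pullback, giving $\mathcal{G} \times_{h^H} h^U \cong h^{F \times_S U}$. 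Hence $\mathcal{G}$ is an $h^F$-torsor on $(\mathrm{Sch}/H)_{\mathrm{fppf}}$ split by $U \to H$.

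Next, I would descend this trivialisation to an $H$-scheme. The torsor structure equips the affine $U$-scheme $F \times_S U$ with an fppf descent datum relative to $U \to H$, and since $F \to S$ is affine, so is $F \times_S U \to U$. Effectivity of fppf descent for affine morphisms (see \cite{Tamme_etale_cohomology}) then produces a scheme $G$, affine over $H$, together with an isomorphism of fppf sheaves $\mathcal{G} \cong h^G$. Yoneda upgrades the abelian-sheaf structure on $h^G$ to a commutative group-scheme structure on $G$, and the morphisms in the original sheaf sequence become morphisms of group schemes.

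For the remaining claims, assume $F$ and $H$ are finite and flat over $S$. Then $F \times_S U \to U$ is finite and flat, and both properties are fppf-local on the target, so $G \to H$ is finite and flat; composition with $H \to S$ then shows $G \to S$ is finite and flat. The identification $h^F = \ker(\psi)$ translates into $F$ being the scheme-theoretic kernel of $G \to H$, which is a closed immersion by the same argument as in Remark \ref{remark:sheaf_injectivity_same_as_closed_embedding} (using separatedness of $H \to S$). Finally, $G \to H$ is faithfully flat because its base change along the fppf cover $U \to H$ is the faithfully flat projection $F \times_S U \to U$, and faithful flatness descends through fppf covers on the target.

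The step I expect to be the main obstacle is the descent itself: one must check that the gluing isomorphism on $(F \times_S U) \times_U (U \times_H U) \cong F \times_S (U \times_H U)$ supplied by the torsor structure satisfies the cocycle condition and is genuinely scheme-theoretic rather than only an a priori sheaf-theoretic map, so that effectivity of fppf descent applies. This is precisely where the hypothesis that $F$ be affine over $S$ is used in an essential way; without it, neither descent effectivity nor representability of the resulting sheaf would be guaranteed.
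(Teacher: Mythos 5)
Your proposal is correct and follows essentially the same route as the paper: both apply fppf-surjectivity to $\mathrm{id}_H$ to obtain a cover of $H$ with a section trivialising $\mathcal{G}$ as $h^{F\times_S U}$ (your torsor/shearing-map language is just a repackaging of the paper's explicit isomorphism $g \mapsto ([s\psi(g)]^{-1}g, \psi(g))$), then invoke effectivity of fppf descent for affine morphisms and descend finiteness, flatness and faithful flatness along the cover. The cocycle-condition worry you flag at the end is handled in the paper exactly as you suggest, by transporting the canonical descent datum on $\mathcal{G}\times_{h^H}h^X$ through the Yoneda embedding.
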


\begin{proof}
    As $\mathcal{G} \rightarrow h^H$ is surjective, there exists a fppf cover $\{X_i \rightarrow H\}$ such that for each $i$, $\mathrm{id}_H|_{X_i} \in h^H(X_i)$ has a preimage in $\mathcal{G}_H(X_i)$.
    Let $X$ be the $S$-scheme obtained from gluing the disjoint union of the $X_i$.
    Thus there exists $y \in \mathcal{G}(X)$ mapping to the $S$-scheme morphism $ (X \xrightarrow{x} H )\in h^H(X)$.

    The fibre product $h^F \times _{h^S}h^X$ is represented by $F \times _S X$ \cite[Section I.1.2]{SGA3_tome_I}.
    We thus have $h^X$-sheaves $h^{F \times _S X} \xrightarrow{p} h^X$ and $\mathcal{G}\times _{h^H}h^{X} \xrightarrow{\psi \times h^X} h^X$.
    By \cite[Prop. I.1.4.1]{SGA3_tome_I} there is an explicit equivalence of categories between functors over $h^X$ and functors on $(\mathrm{Sch}/X)$.
    We write $\alpha_X(p)$ and $\alpha_X(\psi)$ for the functors on $(\mathrm{Sch}/X)$ corresponding to $p$ and $\psi \times h^X$ respectively.
    Likewise, we write $\alpha_H(\psi)$ for the functor on $(\mathrm{Sch}/H)$ corresponding to $\mathcal{G} \xrightarrow{\psi} h^H$.

    Let $T \xrightarrow{f} X$ be an $X$-scheme.
    Evaluating the above functors on $f$ as described in \cite[Section I.1.4]{SGA3_tome_I}, we obtain $\alpha_X(p)(f) = F(T) \times \{f\}$ and $\alpha_X(\psi)(f) = \{(g,f) \in \mathcal{G}(T) \times \{f\} | \psi(g) = x \circ f\}$.
    For any $k \in F(T)$, we have $\psi(kg) = \psi(g)$.
    In particular, $\alpha_X(p)(f)$ acts on $\alpha_X(\psi)(f)$ via $(k,f) \cdot (g,f) = (kg,f)$.
    The condition $\psi(g) = x \circ f$ ensures the first component of any two elements in $\alpha_X(\psi)(f)$ belong to the same $F(T)$ coset of $G(T)$.
    Thus the above action is simply transitive.

    As $(y, \mathrm{id}_X) \in \alpha_X(\psi)(\mathrm{id}_X)$, it follows that for any $X$-scheme $T \xrightarrow{f} X$, we have $(\mathcal{G}(f)(y), f) \in \alpha_X(\psi)(f)$.
    Likewise, as $\mathcal{G}(T) \neq \emptyset$ is a group it contains an identity element, which lies in the image of $F(T) \hookrightarrow \mathcal{G}(T)$, so $\alpha_X(p)(f) \neq \emptyset$.
    Thus we obtain a natural isomorphism between $\alpha_X(p)$ and $\alpha_X(\psi)$ by defining for an $X$-scheme $T \xrightarrow{f} X$ the map $\alpha_X(p)(f) \rightarrow \alpha_X(\psi)(f)$ given by $(k,f) \mapsto (k \cdot \mathcal{G}(f)(y), f)$.

    Being the base change of $\mathcal{G} \rightarrow h^H$,
    the $h^X$-sheaf $\mathcal{G} \times_{h^H} h^X \rightarrow h^X$ comes with canonical descent data for $h^X \rightarrow h^H$.
    Combining  \cite[\href{https://stacks.math.columbia.edu/tag/02W5}{Tag 02W5}]{stacks-project} and \cite[Thm. 14.72]{Goertz_Wedhorn_book}, we see that to prove $\alpha_H(\psi)$ is representable it suffices to observe $F \times _S X \rightarrow X$ is affine which in turn follows from \cite[Prop. 12.3 (2)]{Goertz_Wedhorn_book} since $F \rightarrow S$ is affine.
    It follows from \cite[Prop. 1.5.1]{SGA3_tome_I} that $\mathcal{G}$ is representable by some $S$-scheme $G$.

    We now show that if $H$ and $F$ are both finite and flat over $S$, then so is $G \rightarrow H \rightarrow S$.
    As $H \rightarrow S$ is a finite flat morphism and compositions of finite flat morphisms remain finite and flat  \cite[pg. 583]{Goertz_Wedhorn_book}; it is enough to show $G \rightarrow H$ is finite and flat.

    To prove $G \rightarrow H$ is a finite flat morphism, it suffices by \cite[Prop. 14.53]{Goertz_Wedhorn_book} to show $G_X \coloneqq G \times_H X \rightarrow X$ satisfies these properties.
    In fact, since $G_X$ is isomorphic to $F \times X$ as an $X$-scheme, it is enough to show $F \times X \rightarrow X$ satisfies these properties.
    These properties are preserved under base change \cite[pg. 583]{Goertz_Wedhorn_book}, thus they hold for $F \times X \rightarrow X$ since they hold for $F \rightarrow S$.

    By the Yoneda embedding, the homomorphisms in the exact sequence of sheaves induces homomorphisms of group schemes
    \[0 \rightarrow F \rightarrow G \rightarrow H \rightarrow 0.\]
    By Remark \ref{remark:sheaf_injectivity_same_as_closed_embedding}, it suffices to prove $G \rightarrow H$ is faithfully flat.
    To see this, we begin by noting that as $F$ is an $S$-group scheme, there exists morphisms $S \rightarrow F$ and $F \rightarrow S$ whose composition $S \rightarrow F \rightarrow S$ is the identity.
    In particular, $F \rightarrow S$ is surjective and thus faithfully flat since $F \rightarrow S$ is flat by assumption.
    Thus the base change $F \times X \rightarrow X$ is faithfully flat \cite[pg. 583]{Goertz_Wedhorn_book}.
    The base change of $G \rightarrow H$ by $X \rightarrow H$ is $G_X  \rightarrow X$.
    As shown above, $G_X \cong F \times X$, thus $G_X \rightarrow X$ is faithfully flat.
    Since $X \rightarrow H$ is fppf, we deduce $G \rightarrow H$ is faithfully flat by descent \cite[pg. 583]{Goertz_Wedhorn_book}.
\end{proof}

To summarise, we may treat $\Ext^1_S(H,F)$ as the set classifying extensions of $H$ by $F$ in $(\mathrm{FFGS}/S)$ up to equivalence, which comes with an abelian group law induced by the Baer sum in the category of abelian fppf sheaves over $S$.
Note that if $H$ and $F$ have $\ell$-power order then so does any extension of $H$ by $F$ \cite[pg. 136]{Tate_FLT}.

Suppose $\mathcal{C}$ is a full subcategory of $(\mathrm{FFGS}/S)$ such that, when viewed as a full subcategory of $\mathrm{AbSh}(\mathrm{fppf}/S)$, the Baer sum maps objects in $\mathcal{C}$ to objects in its essential image.
Then for objects $H,F$ of $\mathcal{C}$, we shall write $\Ext^1_\mathcal{C}(H,F)$ for the subgroup of $\Ext^1_S(H,F)$ given by classes of extensions which lie in the essential image of $\mathcal{C}$.

For example, $\mathcal{C}_{p,\ell}$ is a full subcategory of $(\mathrm{FFGS}/\ZZ[\frac{1}{p}])$ satisfying this property, since it is closed under taking products, subobjects and quotients.

\subsubsection{Examples of extensions}
\label{subsubsection:examples_of_Extensions}
 Let $n \neq 0$ be an integer.

\begin{example}[Katz-Mazur group schemes]
\label{example:Katz-Mazur_grp_schms}
The following example comes from \cite[Section 8.7, pg. 251]{Katz_Mazur}.
Let $R$ be a noetherian domain and suppose $n>0$.
Denote the field of fractions of $R$ by $K$.
Let $\varepsilon \in R^*$.
Define
\[R_\varepsilon[n] = \bigoplus_{i=0}^{n-1}R[x]/\langle x^n - \varepsilon^i\rangle.\]
This defines a finite flat group scheme $G_\varepsilon[n]$ over $R$ of order $n^2$ which is killed by $n$ and satisfies $K(G_\varepsilon[n]) = K(\zeta_n, \sqrt[n]{\varepsilon})$.
Indeed, for $S$ a subring of $\overline{K}$, the set $G_\varepsilon[n](S)$ is given by $\{(x,i)| x \in S, i \in \ZZ/n\ZZ, \text{ and } x^n = \varepsilon^i\}$.
The group law is given by \[(x,i) + (y,j) = \begin{cases}
    (xy, i+j) &\text{ if } i+j <n\\
    (xy/a, i+j-n) &\text{ if } i+j \geq n.
\end{cases}\]
The natural injection $\bigoplus^{n-1}_{i=0} R \hookrightarrow R_\varepsilon[n]$ and the quotient map $R_\varepsilon[n] \rightarrow R[x]/\langle x^n-1 \rangle $ induce an exact sequence of group schemes over $R$:
\[1 \rightarrow \mu_n \rightarrow G_\varepsilon[n] \rightarrow \ZnZconst{n} \rightarrow 0.\]
\end{example}

\begin{example}
\label{example:explicit_mu_2_by_Zmod2Z}
     Let $R$ denote the ring $\ZZ[\frac{1}{8n+1}]$.
 The ring $A \coloneqq R[x,y] / \langle y^2+2y, x^2-x+ny \rangle$ becomes a Hopf algebra with comultiplication map induced by
 \[ (x,y) + (w,z) =  (x+w - 2xw + \frac{nyz}{8n+1}(1-2x)(1-2w), \, y+z+yz  ).\]
This is clearly commutative, since it is symmetric.
Furthermore one can easily verify that $(x,y) + (0,0) = (x,y)$ and $(x,y) + (x,y) = (0,0)$. 
Thus the resulting group scheme over $R$ is annihilated by 2.
Its field of points equals $\QQ(\sqrt{8n+1})$.
We shall denote this group scheme by $(\ZnZconst{2} \,.\,  \mu_2)^\chi$, where $\chi$ is the non-trivial character of $\QQ(\sqrt{8n+1})$.
The injection $R[y] / \langle y^2+2y\rangle \hookrightarrow  A$ and quotient map $A \rightarrow A/\langle y \rangle$ induce an exact sequence of group schemes over $R$:
\[0 \rightarrow \ZnZconst{2} \rightarrow (\ZnZconst{2} \,.\,  \mu_2)^\chi \rightarrow \mu_2 \rightarrow 1.\]
\end{example}

The next examples show that we may equip the ring $B \coloneqq\ZZ[\frac{1}{4n+1}][x,y]/\langle y^2-y,x^2-x -ny \rangle$ with comultiplication maps so that the resulting Hopf algebras over $R' \coloneqq\ZZ[\frac{1}{4n+1}]$ are not isomorphic.

\begin{example}
We may equip $B$ with the comultiplication map 
\[(x,y)+(w,z) = (x+w - 2xw + \frac{2nyz}{4n+1}(1-2x)(1-2w), \, y+z - 2yz).\]
Again, one immediately verifies that this is commutative, $(x,y)+(0,0) = (x,y)$ and $(x,y) + (x,y) = (0,0)$.
Thus the resulting group scheme over $R'$ is annihilated by 2.
Its field of points equals $\QQ(\sqrt{4n+1})$.
We shall denote this group scheme by $(\underline{\ZZ/2\ZZ} \times \underline{\ZZ/2\ZZ})^\chi$, where $\chi$ is the non-trivial character of $\QQ(\sqrt{4n+1})$.
The injection $R'[y] / \langle y^2-y\rangle \hookrightarrow  B$ and quotient map $B \rightarrow B/\langle y \rangle$ induce an exact sequence of group schemes over $R'$:
\[0 \rightarrow\underline{\ZZ/2\ZZ}\rightarrow(\underline{\ZZ/2\ZZ} \times \underline{\ZZ/2\ZZ})^\chi\rightarrow\underline{\ZZ/2\ZZ} \rightarrow0.\]
\end{example}

\begin{example}
    We may also equip $B$ with the comultiplication map 
\[(x,y)+(w,z) =( x+w - 2xw + \frac{(2n+1)yz}{4n+1}(1-2x)(1-2w), \, y+z - 2yz).\]
Again, this is commutative and $(x,y)+(0,0) = (x,y)$.
In this case, $(x,y)+(x,y) = (y,0)$, $(y,0)+(y,0) = (0,0)$ and $(x,y) + (x+y-2xy,y) = (0,0)$.
In particular, the resulting group scheme over $R'$ is annihilated by 4, but not by 2.
Its field of points equals $\QQ(\sqrt{4n+1})$.
We shall denote this group scheme by $(\underline{\ZZ/4\ZZ})^\chi$, where $\chi$ is the non-trivial character of $\QQ(\sqrt{4n+1})$.
The injection $R'[y] / \langle y^2-y\rangle \hookrightarrow  B$ and quotient map $B \rightarrow B/\langle y \rangle$ induce an exact sequence of group schemes over $R'$:
\[0 \rightarrow\underline{\ZZ/2\ZZ}\rightarrow(\underline{\ZZ/4\ZZ})^\chi\rightarrow\underline{\ZZ/2\ZZ} \rightarrow0.\]
\end{example}

In reference to Example \ref{example:explicit_mu_2_by_Zmod2Z} we note:
\begin{proposition}
{(\cite[Corollary 4.2]{Schoof_one_bad_prime})}
\label{prop:extensions_mu_ell_by_Z_mod_ell_Z}
Let $\ell$ and $p$ be distinct primes.
We have that
\[\dim_{\FF_\ell}\Ext^1_{\ZZ[\frac{1}{p}]}(\mu_\ell, \underline{\ZZ/\ell\ZZ}) = \begin{cases}
    1, \text{ if } \frac{p^2-1}{24} \equiv 0 \mod{\ell}; \\
    0, \text{ otherwise.}
\end{cases}\]
\end{proposition}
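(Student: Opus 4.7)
My plan is to apply the contravariant functor $\Hom_S(-, \ZnZconst{\ell})$ to the Kummer exact sequence $0 \to \mu_\ell \to \G_m \xrightarrow{\ell} \G_m \to 0$ of abelian fppf sheaves over $S = \Spec(\ZZ[\frac{1}{p}])$. Since $\G_m$ is connected and smooth while $\ZnZconst{\ell}$ is étale, any morphism $\G_m \to \ZnZconst{\ell}$ of $S$-group schemes must factor through the identity section, so $\Hom_S(\G_m, \ZnZconst{\ell}) = 0$. From the resulting long exact sequence of $\Ext$ groups we extract
\[
0 \to \Ext^1_S(\G_m, \ZnZconst{\ell})/\ell \to \Ext^1_S(\mu_\ell, \ZnZconst{\ell}) \to \Ext^2_S(\G_m, \ZnZconst{\ell})[\ell] \to 0,
\]
reducing the problem to computing $\Ext^i_S(\G_m, \ZnZconst{\ell})$ for $i = 1, 2$.

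For the latter, I would apply $\Hom_S(\G_m, -)$ to the auxiliary short exact sequence $0 \to \underline{\ZZ} \xrightarrow{\ell} \underline{\ZZ} \to \ZnZconst{\ell} \to 0$, again using that $\Hom_S(\G_m, \underline{\ZZ}) = 0$. Using the quasi-isomorphism $\underline{\ZZ} \simeq [\underline{\QQ} \to \underline{\QQ/\ZZ}]$, with $\underline{\QQ}$ uniquely divisible, one can express $\Ext^i_S(\G_m, \underline{\ZZ})$ in terms of the fppf cohomology $H^i_{\mathrm{fppf}}(S, \G_m)$. By Hilbert~90 the latter coincides with the étale cohomology and is therefore controlled by the standard invariants $\ZZ[\frac{1}{p}]^\times = \{\pm 1\} \times \langle p \rangle$, $\mathrm{Pic}(\ZZ[\frac{1}{p}]) = 0$, and $\mathrm{Br}(\ZZ[\frac{1}{p}])$, the last of which sits in the Hasse--Brauer--Noether reciprocity sequence.

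Combining these identifications should yield an explicit description of $\Ext^1_S(\mu_\ell, \ZnZconst{\ell})$ as a subquotient of units modulo $\ell$-th powers in $\QQ(\mu_\ell)$, further constrained by a local condition at $\ell$ coming from the Oort--Tate classification (cf.~Section \ref{sec:strategy_prolongations}) which ensures the sheaf extension corresponds to an honest finite flat group scheme over $\ZZ_\ell$. The main obstacle I expect is the final numerical step: translating this description into the divisibility condition $\ell \mid (p^2-1)/24$. The factorisation $(p-1)(p+1)/24$ suggests that one contribution comes from the splitting behaviour of $p$ in $\QQ(\mu_\ell)$ while the other encodes the ramification-type constraint governing the prolongation at $\ell$; reconciling these two contributions---and checking that at most one dimension of extensions ever survives---should yield the stated formula.
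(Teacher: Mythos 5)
The paper does not prove this statement itself: it is quoted from \cite[Cor.\ 4.2]{Schoof_one_bad_prime}, where it is obtained from the Mayer--Vietoris sequence of Theorem \ref{thm:Mayer_Vietoris} --- one computes the extension group over $\ZZ[\frac{1}{p\ell}]$ (everything is étale there, so the problem becomes Galois cohomology of $\ZZ/\ell\ZZ(-1)$ unramified outside $p\ell$, controlled by class field theory and the splitting of $p$), over $\ZZ_\ell$ (Oort--Tate theory and an explicit Kummer-theoretic analysis; this is where the exceptional congruences at $\ell=2,3$, hence the $24$, originate), and then glues along $\QQ_\ell$. Your plan is a genuinely different route, and while the six-term sequence you extract from the Kummer sequence is formally correct, the plan has gaps I do not think are repairable in the form stated.

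First, the dévissage trades the problem for computing $\Ext^i_S(\G_m,\ZnZconst{\ell})$ for $i=1,2$, and the proposed evaluation of these groups is not valid: resolving $\underline{\ZZ}$ by $[\underline{\QQ}\to\underline{\QQ/\ZZ}]$ relates $H^i_{\mathrm{fppf}}(S,\G_m)$ to $\Ext^i(\underline{\ZZ},\G_m)$, i.e.\ the arguments in the \emph{opposite} order; there is no analogous identification of $\Ext^i_S(\G_m,\underline{\ZZ})$ with $\G_m$-cohomology. Computing $\Ext^i_S(\G_m,-)$ genuinely requires the local-to-global spectral sequence and control of the local sheaves $\mathcal{E}xt^q(\G_m,-)$ (Breen's theorems), which is considerably deeper than the statement being proved; in particular the term $\Ext^2_S(\G_m,\ZnZconst{\ell})[\ell]$ that your sequence needs is left entirely uncontrolled. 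Second, the last step is internally inconsistent about which category the computation lives in: a class in $\Ext^1_{\ZZ[\frac{1}{p}]}(\mu_\ell,\ZnZconst{\ell})$, computed in the category of fppf sheaves over $\ZZ[\frac{1}{p}]$, is \emph{automatically} represented by a finite flat group scheme over $\ZZ[\frac{1}{p}]$ (this is precisely the representability proposition of Section \ref{subsection:extensions}), so there is no further ``local condition at $\ell$ from Oort--Tate'' to impose afterwards; such a condition enters only if one instead computes extensions of the generic fibres (Galois cohomology over $\QQ$) and asks which of them prolong --- a different computation from the one your Kummer sequence sets up. Finally, the numerical heart of the statement --- why the answer is governed by $(p^2-1)/24$ rather than by $p^2-1$, i.e.\ the extra congruences modulo $8$ and $9$ when $\ell=2,3$ --- is exactly the part you defer, and it is the part that requires the local analysis at $\ell$ that your framework never reaches.
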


\begin{remark}
    Note that for $\ell = 2$, the above is saying that 
    there are no non-split extensions if $p \equiv \pm 5 \mod{8}$ and there is a unique non-trivial extension of $\mu_2$ by $\underline{\ZZ/2\ZZ}$ if $p \equiv \pm 1 \mod{8}$ and no non-trivial extensions otherwise.
    As part of the examples in Section \ref{sec:FS_examples}, we shall  provide a proof of the case when $p \equiv \pm 5 \mod{8}$, see Lemma \ref{lem:running_example_extensions_mu_2_by_Z_mod_2_Z_over_1/3}.
    In the case $p \equiv \pm 1 \mod{8}$ our above examples give explicit realisations of these non-trivial extensions.

    Indeed, suppose $p \equiv \pm 1 \mod{8}$, set $p^* = (-1)^{\frac{p-1}{2}}p$ and let $\chi$ be the non-trivial character of $\Gal(\QQ(\sqrt{p^*})/\QQ)$.
    Then, as we have seen, $(\ZnZconst{2} \,.\,  \mu_2)^\chi$ is a non-trivial extension of $\mu_2$ by $\underline{\ZZ/2\ZZ}$ over $\ZZ[\frac{1}{p^*}] = \ZZ[\frac{1}{p}]$.
\end{remark}

\subsubsection{Studying extensions via their base changes}
The Mayer-Vietoris sequence below allows one to study extensions of finite flat group schemes by one another over some base ring $R$ via their base changes.
This can be an incredibly useful tool.
Indeed, we will often base change to an extension $R \rightarrow R'$ such that either the group schemes under consideration become étale, or such that $R'$ is a local henselian ring, in which case the connected-étale exact sequence holds \cite[(3.7)]{Tate_FLT}.
In these situations it is much easier to work with the associated extensions.

Let $R$ be a noetherian ring, $\ell \in R$ and define $\hat{R} \coloneqq \varprojlim  R/\ell^n R$.

\begin{theorem}{(Mayer-Vietoris sequence, \cite[Cor. 2.4]{Schoof_cyclotomic})} \label{thm:Mayer_Vietoris}
Let $G,H$ be finite flat group schemes over $R$.
Then the below is an exact sequence

\hspace*{-2.5em}{
\begin{tikzpicture}[descr/.style={fill=white,inner sep=1.5pt}]
        \matrix (m) [
            matrix of math nodes,
            row sep=1em,
            column sep=2.5em,
            text height=1.5ex, text depth=0.25ex
        ]
        { 0 &\Hom_{R}(G,H) & \Hom_{\hat{R}}(G,H) \times \Hom_{R[\frac{1}{\ell}]}(G,H) & \Hom_{\hat{R}[\frac{1}{\ell}]}(G,H) \\
            & \Ext^1_R(G,H) & \Ext^1_{\hat{R}}(G,H) \times \Ext^1_{R[\frac{1}{\ell}]}(G,H)  & \Ext^1_{\hat{R}[\frac{1}{\ell}]}(G,H) \\
        };

        \path[overlay,->, font=\scriptsize,>=latex]
        (m-1-1) edge (m-1-2)
        (m-1-2) edge (m-1-3)
        (m-1-3) edge (m-1-4)
        (m-1-4) edge[out=355,in=175] node[descr,yshift=0.3ex] {$\delta$} (m-2-2)
        (m-2-2) edge (m-2-3)
        (m-2-3) edge (m-2-4);

\end{tikzpicture}

}

where $\delta$ is as defined in \cite[Cor. 2.4]{Schoof_cyclotomic} and the rest of the maps are given either by base change, or taking the difference of base changes.
\end{theorem}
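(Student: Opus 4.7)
The plan is to derive this Mayer--Vietoris sequence from the fact that $\Spec(R)$ is covered, in an appropriate sense, by $\Spec(\hat R)$ and $\Spec(R[\frac{1}{\ell}])$ with overlap $\Spec(\hat R[\frac{1}{\ell}])$, together with a gluing principle for finite flat group schemes analogous to Proposition \ref{prop:equivalence_categories_prolongations}. Concretely, the first step is to establish: a finite flat group scheme over $R$ is equivalent to a triple $(G_1, G_2, \theta)$ where $G_1$ is finite flat over $\hat R$, $G_2$ is finite flat over $R[\frac{1}{\ell}]$, and $\theta\colon (G_1)_{\hat R[1/\ell]} \xrightarrow{\sim} (G_2)_{\hat R[1/\ell]}$ is an isomorphism; morphisms correspond to compatible pairs. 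When $\ell$ is a non-zero-divisor this is Beauville--Laszlo style descent, and in general it is precisely the content of Schoof's equivalence cited in Proposition \ref{prop:equivalence_categories_prolongations}.

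With this equivalence in hand, exactness of the top row becomes purely formal: a morphism $G \to H$ over $R$ is the same as a compatible pair of morphisms over $\hat R$ and $R[\frac{1}{\ell}]$ whose base changes to $\hat R[\frac{1}{\ell}]$ agree. This is exactly the equalizer sequence expressing the sheaf condition on $\Hom(G,H)$, giving the first three non-trivial terms and exactness at $\Hom_{\hat R}(G,H) \times \Hom_{R[1/\ell]}(G,H)$.

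The connecting map $\delta$ is built by gluing. Given $\varphi \in \Hom_{\hat R[1/\ell]}(G,H)$, start with the trivial extensions $G_{\hat R} \oplus H_{\hat R}$ and $G_{R[1/\ell]}\oplus H_{R[1/\ell]}$ and glue them along the automorphism of $(G\oplus H)_{\hat R[1/\ell]}$ sending $(g,h)\mapsto(g,\,h+\varphi(g))$. By the gluing equivalence this produces a finite flat group scheme $E$ over $R$ sitting in an extension $0\to H\to E\to G\to 0$, and one sets $\delta(\varphi) = [E]$. A direct verification shows $\delta$ is a homomorphism and that $\delta(\varphi)=0$ precisely when the gluing automorphism differs from the identity by a coboundary, i.e.\ when $\varphi$ is the image of a compatible pair from the top row.

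Exactness at $\Ext^1_R(G,H)$ and at the $\hat R, R[\frac{1}{\ell}]$ summand then both reduce to the gluing principle applied to extensions: an extension over $R$ whose base changes split admits, after choosing splittings on each side, a comparison isomorphism on $\hat R[\frac{1}{\ell}]$ which differs from the identity by some $\varphi$ realising the class via $\delta$; conversely, two extensions over $\hat R$ and $R[\frac{1}{\ell}]$ descend to an extension over $R$ iff their base changes to $\hat R[\frac{1}{\ell}]$ are equivalent. The main technical obstacle will be carefully verifying that the gluing equivalence of categories extends from single finite flat group schemes to short exact sequences of them (so that the Baer sum on either side is compatible with $\delta$); this is where one must appeal to the fact that the short exact sequences of Definition \ref{def:exact_sequence_group_schemes} are detected by faithfully flat descent of the quotient map, which plays well with the fpqc-style cover $\{\Spec \hat R, \Spec R[\frac{1}{\ell}]\} \to \Spec R$.
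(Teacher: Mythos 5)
The paper does not prove this statement: it is quoted verbatim from Schoof, and the only ``proof'' in the text is the citation to \cite[Cor.~2.4]{Schoof_cyclotomic}. Your argument --- deriving the sequence formally from the patching equivalence between finite flat group schemes over $R$ and triples over $\hat R$, $R[\frac{1}{\ell}]$ glued over $\hat R[\frac{1}{\ell}]$ (the equivalence the paper itself restates as Proposition~\ref{prop:equivalence_categories_prolongations}), with $\delta$ given by twisting the gluing of trivial extensions --- is precisely how the cited result is established in Schoof's paper, and the technical point you flag (that exactness in the sense of Definition~\ref{def:exact_sequence_group_schemes} descends along the faithfully flat cover, so the equivalence is compatible with extensions and Baer sums) is the right one to check. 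So your proposal is a correct outline of essentially the same proof as the source.
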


We now give a number of results which are particularly useful in combination with the above Mayer-Vietoris sequence.

\begin{proposition}
\label{prop:unique_extension_of_homs}
Let $R \hookrightarrow R'$ be an injection of rings and $G,H$ be affine schemes over $R$.
Suppose $G$ is flat over $ R$.
Then base change induces an injective map
    \[
        \Hom_{ R}(G,H) \hookrightarrow \Hom_{ R'}(G_ {R'}, H_{ R'}).
    \]
    If moreover $G $ and $H$ are group schemes, then the above map preserves group scheme homomorphisms.
\end{proposition}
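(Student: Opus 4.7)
The plan is to pass to the dual (Hopf-algebra) side, reduce to showing that the canonical map $A \to A \otimes_R R'$ is injective, and extract this from flatness of $A$ and injectivity of $R \hookrightarrow R'$.

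First, write $G = \Spec(A)$ and $H = \Spec(B)$. Since $G$ and $H$ are both affine over $R$, morphisms of $R$-group schemes $G \to H$ correspond contravariantly to homomorphisms of $R$-Hopf algebras $B \to A$. The base change functor $(-)_{R'} = (-) \times_{\Spec R} \Spec R'$ corresponds to tensoring with $R'$ over $R$ on algebras, so the map in the statement identifies with
\[
\Hom_{R\text{-Hopf}}(B, A) \longrightarrow \Hom_{R'\text{-Hopf}}(B \otimes_R R',\, A \otimes_R R'), \qquad \varphi \mapsto \varphi \otimes \mathrm{id}_{R'}.
\]
Thus it suffices to prove the following: if $\varphi, \psi \colon B \to A$ are two $R$-Hopf algebra homomorphisms with $\varphi \otimes \mathrm{id}_{R'} = \psi \otimes \mathrm{id}_{R'}$, then $\varphi = \psi$. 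Considering the difference $\varphi - \psi$ as a map of $R$-modules, this reduces to showing that for any $b \in B$, if $(\varphi - \psi)(b) \otimes 1 = 0$ in $A \otimes_R R'$, then $(\varphi - \psi)(b) = 0$ in $A$. In other words, the whole question boils down to proving that the canonical $R$-module map
\[
A \longrightarrow A \otimes_R R', \qquad a \mapsto a \otimes 1,
\]
is injective.

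For this, I would use flatness of $A$. The hypothesis that $R \hookrightarrow R'$ is an injection gives a short exact sequence $0 \to R \to R'$ of $R$-modules. Tensoring by the flat $R$-module $A$ preserves injectivity, yielding $0 \to A \otimes_R R \to A \otimes_R R'$. Since $A \otimes_R R \cong A$ canonically, this is exactly the statement that $a \mapsto a \otimes 1$ is injective. Applying this to $(\varphi - \psi)(b)$ for each $b \in B$ gives $\varphi = \psi$, as required.

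There is no real obstacle here; the only thing to be careful about is the translation between the scheme-theoretic statement and the algebra-theoretic one, and making sure one only uses flatness of $A$ (not of $B$ or of $R \to R'$, neither of which is assumed). The argument makes clear why flatness of $G$ is needed: without it, torsion elements of $A$ could be annihilated by tensoring with $R'$, so two distinct group scheme maps could collapse to the same map after base change.
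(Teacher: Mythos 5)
Your proof is correct and follows essentially the same route as the paper's: pass to the Hopf-algebra side, reduce to the injectivity of the canonical map $A \to A \otimes_R R'$, and deduce that from flatness of $A$ together with the injection $R \hookrightarrow R'$. The only cosmetic difference is that you work with the $R$-linear difference $\varphi - \psi$ directly, while the paper invokes the group structure on $\Hom_R(B,A)$ to reduce to the case of a map sent to zero; these amount to the same thing.
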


\begin{proof}
    Write $G=\Spec A$ and $H=\Spec B$ where $A,B$ are $R$-algebras and $A$ is flat over $R$.
    Elements of $ \Hom_{ R}(G,H)$ correspond bijectively to homomorphisms of $R$-algebras $B \rightarrow A$.
    Thus it suffices to show the map
    \[
    \Hom_R(B,A) \to \Hom_{R'}(B\otimes_R R', A\otimes_R R'), \hspace{0.3cm} f \mapsto f \otimes 1
    \]
    is injective.
    Let $f,g\in  \Hom_{ R}(B,A)$.
    Suppose that the images of $f$ and $g$ under the above map coincide.
    In particular, for all $b \in B$, we have $f(b) \otimes 1 = g(b) \otimes 1$.
    Thus $(f-g)(b) \otimes 1 = 0$.
    As $A$ is a flat $R$-module and $R$ injects into $R'$, we find $A \rightarrow A \otimes_R R'$ is also injective.
    Thus $(f-g)(b) =0$, that is $f=g$.

    If $G$ and $H$ are group schemes and $f \colon B \rightarrow A$ is a homomorphism of Hopf algebras, then so is the induced map $f \otimes 1$ via a straightforward verification.
\end{proof}

\begin{corollary}
\label{Cor:if_n_kills_the_base_change_then_it_kills_you_too}
    Let $R \hookrightarrow R'$ be an injection of rings and $G$ be a flat affine group scheme over $R$.
    If multiplication by $n$ annihilates $G_{R'}$, then it also annihilates $G$.
\end{corollary}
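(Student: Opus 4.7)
The plan is to apply Proposition \ref{prop:unique_extension_of_homs} directly to the multiplication-by-$n$ endomorphism. Since $G$ is a group scheme, multiplication by $n$ defines an endomorphism $[n]_G \colon G \to G$, and the zero endomorphism $0_G \colon G \to G$ (given by the composition of the structural morphism $G \to \Spec R$ with the identity section) is another element of $\Hom_R(G,G)$. Both are genuine $R$-morphisms of the affine scheme $G$ to itself.

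The key observation is that base change is functorial and compatible with the group scheme structure: the base change of $[n]_G$ to $R'$ is $[n]_{G_{R'}}$, and the base change of $0_G$ is $0_{G_{R'}}$. By hypothesis, $[n]_{G_{R'}} = 0_{G_{R'}}$ as morphisms in $\Hom_{R'}(G_{R'},G_{R'})$. Since $G$ is flat and affine over $R$ (and $H = G$ is certainly affine), Proposition \ref{prop:unique_extension_of_homs} applies with $H = G$, giving an injection
\[
\Hom_R(G,G) \hookrightarrow \Hom_{R'}(G_{R'}, G_{R'}).
\]
Since $[n]_G$ and $0_G$ have the same image under this injection, we conclude $[n]_G = 0_G$, i.e., multiplication by $n$ annihilates $G$.

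There is no real obstacle here; the corollary is essentially a formal consequence of the previous proposition once one notes that both $[n]$ and $0$ are morphisms of $R$-schemes that commute with base change. The only minor care needed is to phrase ``multiplication by $n$ annihilates $G$'' as the equality of the two morphisms $[n]_G$ and $0_G$ in $\Hom_R(G,G)$, rather than as a statement on points, so that Proposition \ref{prop:unique_extension_of_homs} applies verbatim.
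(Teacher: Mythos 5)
Your proof is correct and is exactly the argument the paper intends: the corollary is stated without proof as an immediate consequence of Proposition \ref{prop:unique_extension_of_homs}, applied with $H=G$ to the group scheme homomorphisms $[n]_G$ and $0_G$, which base change to $[n]_{G_{R'}}$ and $0_{G_{R'}}$ respectively. No issues.
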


The following proposition with a few extra conditions on the ground field is stated by Raynaud \cite[Cor. 3.3.6.]{Raynaud_type_p} and a proof has been provided by Tate \cite[pg. 152 Cor.]{Tate_FLT}.
However, these extra assumptions are unnecessary and prevent one from applying this particularly useful result when studying extensions of group schemes which are annihilated by 2.

\begin{proposition}
\label{prop:base_change_on_extension_groups}
    Let $R$ be a Dedekind domain of characteristic zero and $K$ its field of fractions.
    Let $G,H$ be finite flat group schemes over $R$.
    Suppose that $G$ is the only prolongation of $G_{K}$ to $R$ up to isomorphism.
    Then base change is an injective map from $\mathrm{Ext}^1_R(G,H)$ into $\mathrm{Ext}^1_K(G,H)$.
\end{proposition}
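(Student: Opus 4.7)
The plan is to show that the base-change map $\Ext^1_R(G,H) \to \Ext^1_K(G,H)$ has trivial kernel, \emph{i.e.}~that any extension $0 \to H \to E \to G \to 0$ of $R$-group schemes which splits after base change to $K$ is already split over $R$. So fix such an extension with $K$-section $s_K \colon G_K \to E_K$, and let $G' \subseteq E$ be the scheme-theoretic closure of $s_K(G_K) \subseteq E_K$ inside $E$.

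First I would verify that $G'$ is a finite flat closed subgroup scheme of $E$ over $R$. Finiteness is inherited from $E$, and flatness amounts to showing the Hopf algebra $\mathcal{O}_{G'} = \mathcal{O}_E/J$ is $R$-torsion free: if $r \in R \setminus \{0\}$ annihilates a class $\bar a \in \mathcal{O}_{G'}$, then $ra \in J$ maps to zero in the generic fibre, and since $r$ is a unit in $K$ so does $a$, giving $a \in J$. The subgroup scheme structure follows by the familiar flatness trick: restrictions of the multiplication, inversion and identity morphisms of $E$ to $G'$ (resp.\ $G' \times G'$) land generically in the subgroup scheme $G'_K = s_K(G_K)$ of $E_K$, and flatness of $G'$ and $G' \times G'$ over $R$ then forces them to factor through $G'$ globally.

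The crux of the argument, and the step I expect to be the main obstacle, is to produce an $R$-isomorphism $\tilde s \colon G \to G'$ whose generic fibre equals $s_K$. The uniqueness-of-prolongation hypothesis yields \emph{some} $R$-isomorphism $\iota \colon G' \to G$, but its generic fibre need not agree with $s_K^{-1}$; the composition $\sigma_K \coloneqq \iota_K \circ s_K \in \mathrm{Aut}_K(G_K)$ records the discrepancy. To absorb $\sigma_K$ I would exploit the uniqueness hypothesis in its full strength: since $G$ is \emph{the} prolongation of $G_K$, it must be isomorphic to the maximal prolongation $G^+$ of Proposition \ref{prop:automorphisms_maximal_prolongation}(i), and part (ii) of the same proposition then lifts $\sigma_K$ uniquely to an $R$-automorphism $\sigma$ of $G^+ \cong G$. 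Precomposing $\iota^{-1}$ with $\sigma$ gives $\tilde s \coloneqq \iota^{-1} \circ \sigma$, whose generic fibre is $\iota_K^{-1} \circ \iota_K \circ s_K = s_K$ as required.

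Finally, set $s \coloneqq (G' \hookrightarrow E) \circ \tilde s \colon G \to E$. By construction $s$ has generic fibre $s_K$, so the composition $(E \to G) \circ s \colon G \to G$ agrees generically with $\mathrm{id}_G$, and Proposition \ref{prop:unique_extension_of_homs} promotes this to equality over $R$. Hence $s$ is an $R$-section of the extension, which therefore splits. I emphasise that without the unique-prolongation hypothesis the middle step genuinely fails — as the remark following Proposition \ref{prop:automorphisms_maximal_prolongation} illustrates, automorphisms of $G_K$ need not extend to an arbitrary prolongation — so the hypothesis is essential here, not merely convenient.
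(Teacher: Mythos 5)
Your proposal is correct and follows essentially the same route as the paper's proof: take the schematic closure of the image of the generic section, use the unique-prolongation hypothesis together with Proposition \ref{prop:automorphisms_maximal_prolongation} to correct the resulting isomorphism by an automorphism so that it restricts to the given section generically, and then invoke Proposition \ref{prop:unique_extension_of_homs} to conclude that the resulting map is a section over $R$. The only difference is cosmetic: the paper composes its map $\tau \colon G \to \mathbf{G} \subseteq E$ with the inverse of the induced endomorphism $\varphi \circ \tau$, whereas you precompose with the lifted automorphism before including into $E$; both hinge on exactly the same two auxiliary results.
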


\begin{proof}
    As base change induces a group homomorphism from $\mathrm{Ext}^1_R(G,H)$ to $\mathrm{Ext}^1_K(G,H)$, it suffices to show any extension which base changes to the split extension over $K$ is also split over $R$.
    Let \[
    0 \rightarrow H \rightarrow E \xrightarrow{\varphi} G \rightarrow 0
    \]
    be such an extension.
    That is, we have a section $\iota \colon G_K \rightarrow E_K$ over $K$.
    We denote by $\iota(G_K)$ the schematic image of $\iota$.
    It is a subgroup scheme of $E_K$ isomorphic to $G_K$.

    Let $\textbf{G}$ be the schematic closure of $\iota(G_K)$ in $E$.
    It is a finite flat subgroup scheme of $E$ with generic fibre $\iota(G_K)$ by the proof of Lemma \ref{lem:correspondence_submodules_subschmes}.
    Let $\tau$ denote the composition $G \xrightarrow{\sim} \textbf{G} \subseteq E$, where the first map is an isomorphism whose existence is granted by our assumption that $G$ is the only prolongation of $G_K$ to $R$ up to isomorphism.
    The morphism $\sigma\coloneqq\varphi \circ \tau\colon G \to G$, when restricted to the generic fibre, is an automorphism of $G_K$ by construction.
    Since $G$ is the only prolongation of $G_K$ to $R$ up to isomorphism, we see that $G$ must be the maximal (and minimal) prolongation of $G_K$ and we deduce using Proposition \ref{prop:automorphisms_maximal_prolongation} that $\sigma \in \mathrm{Aut}(G)$.
    Hence the morphism $\tau \circ \sigma^{-1}$ is a section for the above exact sequence.
\end{proof}

\subsection{Step 4: Classifying the possible Tate modules} \label{subsec:classification_Tate_modules}

As mentioned in the introduction, Faltings' Isogeny Theorem reduces the problem of classifying abelian varieties up to isogeny to classifying their Tate modules at some given prime up to isomorphism.

We will begin this section by presenting some general arguments which allow one to rule out certain $\ell$-divisible groups from giving rise to the Tate module of an abelian variety.

We start with a few preliminary lemmas from group theory.

\begin{lemma}
\label{lemma:trivial_by_trivial_implies_order_divides_n}
    Let $\Gamma$ be a group and $N,Q$ be $\ZZ[\Gamma]$-modules upon which $\Gamma$ acts trivially.
    Let $M$ be an extension of $Q$ by $N$ such that $M$ is $n$-torsion for some positive integer $n$.
    Then the action of $\Gamma$ on $M$ factors through a quotient in which every element has order dividing $n$.
\end{lemma}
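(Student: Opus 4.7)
The plan is to exploit the triviality of the $\Gamma$-action on both $N$ and $Q$ to show that for each $\sigma \in \Gamma$, the operator $\sigma - 1$ sends $M$ into $N$ and kills $N$, so that its iterates collapse nicely.

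More precisely, for each $\sigma \in \Gamma$ define $\varphi_\sigma \colon M \to M$ by $\varphi_\sigma(m) = \sigma(m) - m$. Since $\Gamma$ acts trivially on $Q = M/N$, the image of $\varphi_\sigma$ lies in $N$; and since $\Gamma$ acts trivially on $N$, we have $\varphi_\sigma(n') = 0$ for all $n' \in N$. Hence $\varphi_\sigma$ factors through the quotient $Q$. In particular, because $\sigma^i(m) \equiv m \pmod{N}$ for every $i \geq 0$, we obtain
\[
\varphi_\sigma(\sigma^i(m)) = \varphi_\sigma(m) \quad \text{for all } i \geq 0.
\]

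I would then use the telescoping identity
\[
\sigma^n(m) - m = \sum_{i=0}^{n-1} \bigl(\sigma^{i+1}(m) - \sigma^i(m)\bigr) = \sum_{i=0}^{n-1} \varphi_\sigma(\sigma^i(m)) = n \cdot \varphi_\sigma(m).
\]
Since $\varphi_\sigma(m) \in N \subseteq M$ and $M$ is annihilated by $n$, the right-hand side vanishes, so $\sigma^n$ acts as the identity on $M$. Thus the image of $\Gamma$ in $\mathrm{Aut}(M)$ consists entirely of elements whose order divides $n$, giving the claim with the desired quotient being this image.

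There is no real obstacle here; the argument is a two-line telescoping computation once one notices the key structural fact that $\sigma - 1$ factors through a map $Q \to N$ (and therefore all powers of $\sigma - 1$ beyond the first vanish on $M$).
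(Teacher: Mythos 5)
Your argument is correct and is essentially the paper's own proof: both rest on the observation that $\sigma-1$ maps $M$ into $N$ and kills $N$, so that $\sigma^n-1=(\sigma-1)\sum_{i=0}^{n-1}\sigma^i$ collapses to $n(\sigma-1)$, which vanishes because $M$ is $n$-torsion. The telescoping-sum presentation is just a cosmetic variant of the factorisation used in the paper.
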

 
\begin{proof}
    Let $\sigma \in \Gamma$, $m \in M$.
    The image of $(\sigma -1)m$ in $Q$ equals zero.
    Thus $(\sigma -1)m \in N$, whence $(\sigma-1)^2m=0$.
    In particular, $(\sigma-1)^2$ annhilates $M$.

    Note $\sigma^n -1 = \sum^{n}_{i=1} \sigma^i - \sum^{n-1}_{i=0} \sigma^i = (\sum^{n-1}_{i=0} \sigma^i )(\sigma -1)$.
    As the image of $\sigma-1$ is a submodule of $N$, on which $\sigma$ acts trivially, we find $\sigma^n-1 = n(\sigma -1)$ on $M$.
    But $M$ is annihilated by $n$, thus $\sigma^n-1=0$ on $M$, as required.
\end{proof}

\begin{proposition}
\label{prop:field_of_points_is_a_p_power_extension}
    Let $p$ and $\ell$ be primes.
    Suppose $C$ is an étale group scheme belonging to $\mathcal{C}_{p, \ell}$ with a filtration of closed finite flat subgroup schemes such that every successive quotient is isomorphic  to $\underline{\ZZ/\ell \ZZ}$.
    Then $\QQ(C)/\QQ$ is an extension of degree a power of $\ell$, unramified outside of $p$.
\end{proposition}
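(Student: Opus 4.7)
The plan has two clearly separated parts. First, since $C$ lives in $\mathcal{C}_{p,\ell}$ and is étale as a group scheme over $\ZZ[\tfrac{1}{p}]$, the Galois module $C(\overline{\QQ})$ corresponds to a finite module for the étale fundamental group of $\Spec \ZZ[\tfrac{1}{p}]$ (as in the unique-prolongation discussion in Section \ref{sec:strategy_prolongations}). Hence $\QQ(C)/\QQ$ is automatically unramified outside $p$. So the real content is to show $[\QQ(C):\QQ]$ is a power of $\ell$.

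Set $V=C(\overline{\QQ})$ and $\Gamma=\Gal(\QQ(C)/\QQ)$, which acts faithfully on $V$. The given filtration of $C$ induces (via Lemma \ref{lem:correspondence_submodules_subschmes}) a filtration of $\Gamma$-modules
\[
V = V_0 \supseteq V_1 \supseteq \cdots \supseteq V_m = 0
\]
with $V_i/V_{i+1}\cong\ZZ/\ell\ZZ$ carrying the trivial $\Gamma$-action (since $\underline{\ZZ/\ell\ZZ}(\overline{\QQ})$ does). To prove $|\Gamma|$ is an $\ell$-power, it suffices to show that every element $\sigma\in\Gamma$ of prime order $q\neq\ell$ acts trivially on $V$.

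The key step is a downward induction on $i$, showing $\sigma$ acts trivially on $V_i$. The base case $V_m=0$ is trivial. For the inductive step, assume $\sigma|_{V_{i+1}}=\mathrm{id}$. Since the action on $V_i/V_{i+1}$ is trivial, $(\sigma-1)V_i\subseteq V_{i+1}$, and then since $\sigma$ fixes $V_{i+1}$ pointwise, $(\sigma-1)^2=0$ on $V_i$. Expanding $\sigma^q = \bigl((\sigma-1)+1\bigr)^q$ by the binomial theorem and using $(\sigma-1)^2=0$ on $V_i$, we obtain
\[
0 \;=\; \sigma^q-1 \;=\; q(\sigma-1) \quad\text{on } V_i.
\]
Now $V_i$ is $\ell$-power torsion (the order of $C$ is a power of $\ell$), and $q\neq\ell$, so multiplication by $q$ is an automorphism of $V_i$; hence $\sigma-1=0$ on $V_i$, closing the induction. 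Applied to $V_0=V$ and combined with the faithfulness of the $\Gamma$-action, this forces $\sigma=1$, as required.

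There is no real obstacle: the only thing to watch is that $V$ need not be annihilated by $\ell$ (the filtration may produce $\ZZ/\ell^n$ subquotients as abelian groups), which is precisely why one works filtration-step by filtration-step and exploits $(\sigma-1)^2=0$ rather than directly writing $V$ as an $\FF_\ell$-vector space. This is exactly the same group-theoretic mechanism that powers Lemma \ref{lemma:trivial_by_trivial_implies_order_divides_n}, applied iteratively here.
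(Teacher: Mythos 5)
Your proof is correct and takes essentially the same approach as the paper: étaleness over $\ZZ[\frac{1}{p}]$ gives the ramification claim, and the $(\sigma-1)^2=0$ plus binomial-expansion trick, applied inductively along the filtration, gives the $\ell$-power claim. The only organizational difference is that the paper inducts on subgroup schemes and invokes Lemma \ref{lemma:trivial_by_trivial_implies_order_divides_n} for the extension $0\to G_1\to C\to \underline{\ZZ/\ell\ZZ}\to 0$ at each stage, whereas you run a single downward induction on the module filtration for a fixed element of prime order $q\neq\ell$.
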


\begin{proof}
    As $C$ is an étale group scheme over $\ZZ[\frac{1}{p}]$, the extension $\QQ(C)/\QQ$ is unramified outside of $p$.
    Let us now show that the degree of the extension $\QQ(C)/\QQ$ is a power of $\ell$.
    We shall prove this by induction.
    If $C= \underline{\ZZ/\ell\ZZ}$ then this is clear.
    Suppose the result holds for any proper closed finite flat subgroup scheme of $C$.
    By assumption, there is a closed finite flat subgroup scheme $B$ of $C$ such that $C/B \cong \underline{\ZZ/\ell \ZZ}$.
    In other words, we have a short exact sequence
    \[0 \rightarrow B \rightarrow C \rightarrow \underline{\ZZ/\ell\ZZ} \rightarrow 0\]
    where $C$ is annihilated by a power of $\ell$.
    We want to show $\Gal(\QQ(C)/\QQ)$ is an $\ell$-group.
    By assumption we know the quotient $\Gal(\QQ(B)/\QQ)$ is an $\ell$-group, hence it is enough to prove the normal subgroup $\Gal(\QQ(C)/\QQ(B))$ is an $\ell$-group.
    Viewing the above exact sequence as $\ZZ[\Gamma]$-modules  for $\Gamma =\Gal(\QQ(C)/\QQ(B))$, we may apply Lemma \ref{lemma:trivial_by_trivial_implies_order_divides_n} which shows $\Gamma$ is indeed an $\ell$-group.
\end{proof}

We denote the derived subgroup of $\Gamma$ by $D(\Gamma)$.

\begin{lemma}
\label{lemma:G_p-group_G/D(G)_cyclic_gives_G_cyclic}
    Let $\Gamma$ be an $\ell$-group for some prime $\ell$.
    Suppose the images of $g_1, \ldots, g_n \in \Gamma$ generate $\Gamma/D(\Gamma)$.
    Then $\Gamma$ is generated by $g_1, \ldots g_n$.
\end{lemma}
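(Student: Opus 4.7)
The plan is to show that $H := \langle g_1, \ldots, g_n \rangle$ equals $\Gamma$ by a contradiction argument that exploits the fact that in an $\ell$-group every maximal subgroup contains $D(\Gamma)$. By hypothesis, the composition $H \hookrightarrow \Gamma \twoheadrightarrow \Gamma/D(\Gamma)$ is surjective, i.e. $H \cdot D(\Gamma) = \Gamma$.

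Suppose for contradiction that $H \neq \Gamma$. Since $\Gamma$ is finite, $H$ is then contained in some maximal subgroup $M \subsetneq \Gamma$. I would then recall the standard fact that every maximal subgroup of a finite $\ell$-group is normal of index $\ell$. Granting this, $\Gamma/M$ is cyclic of order $\ell$ and in particular abelian, so $D(\Gamma) \subseteq M$. But then
\[
\Gamma \;=\; H \cdot D(\Gamma) \;\subseteq\; M \cdot M \;=\; M \;\subsetneq\; \Gamma,
\]
a contradiction.

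The only nontrivial ingredient is the claim about maximal subgroups of finite $\ell$-groups. This is standard: one proves it by induction on $|\Gamma|$, using that the centre $Z(\Gamma)$ of a nontrivial finite $\ell$-group is nontrivial. If $M$ is maximal in $\Gamma$ and $Z(\Gamma) \not\subseteq M$, then $M Z(\Gamma) = \Gamma$, and $M$ is normal because $Z(\Gamma)$ centralises everything; otherwise $Z(\Gamma) \subseteq M$ and one applies induction to $\Gamma/Z(\Gamma)$ and the maximal subgroup $M/Z(\Gamma)$. Either way $M \trianglelefteq \Gamma$ with $\Gamma/M$ a simple $\ell$-group, hence cyclic of order $\ell$.

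I do not anticipate a serious obstacle here; the result is a direct consequence of the basic structure theory of $p$-groups, essentially Burnside's basis theorem applied with the Frattini subgroup replaced by the possibly smaller $D(\Gamma)$, which works because $D(\Gamma)$ is contained in the Frattini subgroup of $\Gamma$ (equivalently, in every maximal subgroup) whenever $\Gamma$ is nilpotent.
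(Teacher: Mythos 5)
Your proof is correct and is essentially the paper's argument: both rest on the fact that every maximal subgroup of a finite $\ell$-group is normal of index $\ell$ and hence contains $D(\Gamma)$, the paper packaging this via the Frattini subgroup and the non-generator property while you unfold it as a direct contradiction with a maximal subgroup containing $\langle g_1,\ldots,g_n\rangle$. No gap.
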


\begin{proof}
    Every maximal subgroup of an $\ell$-group is normal and has index $\ell$ \cite[Thm. 1.26, Lem. 1.23]{Isaacs_finite_grp_theory_book}.
    In particular, every maximal subgroup $M$ of $\Gamma$ contains $D(\Gamma)$, thus the Frattini subgroup $\Phi(\Gamma)$ contains $D(\Gamma)$.
    
    By assumption $g_1, \ldots g_n \in \Gamma$ generate the quotient $\Gamma/D(\Gamma)$.
    It follows that $\Gamma= \langle g_1, \ldots g_n \rangle D(\Gamma) = \langle g_1, \ldots g_n \rangle \Phi(\Gamma)$ by the above.
    We deduce that $\{g_1, \ldots g_n\} \cup \Phi(\Gamma)$ is a generating set for $\Gamma$.
    However, $\Phi(\Gamma)$ consists of non-generators for $\Gamma$.
    Thus $\Gamma=\langle g_1, \ldots g_n \rangle$.
\end{proof}

\begin{proposition}
\label{prop:filtration_has_no_Z/lZ_nor_mu_l}
    Let $\ell$ and $p$ be distinct primes.
    Suppose that for any simple object $V$ of $\mathcal{C}_{p,\ell}$ not isomorphic to $\underline{\ZZ/\ell \ZZ}$, we have
    $\Ext^1_{\mathcal{C}_{p,\ell}}(V, \underline{\ZZ/\ell\ZZ}) =0$.
    
    Let $A/\QQ$ be a semistable abelian variety with good reduction outside of $p$.
    Let $\mathcal{A}\to \Spec(\ZZ[\frac{1}{p}])$ be the Néron model of $A$ over $\ZZ[\frac{1}{p}]$.
    Then for every positive integer $n$, the group scheme $\mathcal{A}[\ell^n]$ admits a filtration of closed finite flat subgroup schemes in $\mathcal{C}_{p, \ell}$
    \[\mathcal{A}[\ell^n] = G_0 \supseteq G_1 \supseteq \ldots \supseteq G_m = 0\]
    such that the successive quotients $G_i/G_{i+1}$ are simple objects in $\mathcal{C}_{p, \ell}$ not isomorphic to $\underline{\ZZ/\ell \ZZ}$ or $\mu_\ell$.
\end{proposition}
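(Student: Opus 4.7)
The plan is to upgrade the filtration of $\mathcal{A}[\ell^n]$ from Lemma~\ref{lem:filtration_l_torsion} by rearranging its composition factors via the Ext-vanishing hypothesis, and then to rule out any remaining $\underline{\ZZ/\ell\ZZ}$ and $\mu_\ell$ composition factors.

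First, by Cartier duality in $\mathcal{C}_{p,\ell}$---which interchanges $\underline{\ZZ/\ell\ZZ}$ and $\mu_\ell$ (since $\underline{\ZZ/\ell\ZZ}^\vee\cong\mu_\ell$) and satisfies $\Ext^1(V,W)\cong\Ext^1(W^\vee,V^\vee)$---the hypothesis is equivalent to the companion vanishing $\Ext^1_{\mathcal{C}_{p,\ell}}(\mu_\ell, W)=0$ for every simple $W\not\cong\mu_\ell$. Starting from the filtration of Lemma~\ref{lem:filtration_l_torsion}, I would perform adjacent transpositions: whenever two consecutive successive quotients are $G_{i-1}/G_i\cong V$ simple with $V\not\cong\underline{\ZZ/\ell\ZZ}$ and $G_i/G_{i+1}\cong\underline{\ZZ/\ell\ZZ}$, the subquotient $G_{i-1}/G_{i+1}$ defines a class in $\Ext^1(V,\underline{\ZZ/\ell\ZZ})=0$, and the resulting splitting yields a new closed finite flat subgroup scheme $G'_i$ with $G_{i-1}/G'_i\cong\underline{\ZZ/\ell\ZZ}$ and $G'_i/G_{i+1}\cong V$, moving the $\underline{\ZZ/\ell\ZZ}$ factor one step up in the filtration. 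The dual argument applied to $\mu_\ell$'s sitting above a non-$\mu_\ell$ simple pushes them one step down. Iterating these swaps produces a filtration
\[
\mathcal{A}[\ell^n] \supseteq B \supseteq C \supseteq 0
\]
with $\mathcal{A}[\ell^n]/B\cong\underline{\ZZ/\ell\ZZ}^a$, the middle piece $B/C$ having no $\underline{\ZZ/\ell\ZZ}$ or $\mu_\ell$ composition factors, and $C\cong\mu_\ell^c$.

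Showing that $a=c=0$ is the main obstacle and the real content of the proposition. My approach would be via isogeny: the top quotient $\mathcal{A}[\ell^n]\twoheadrightarrow\underline{\ZZ/\ell\ZZ}^a$ corresponds to an isogeny $A\to A/K$ of semistable abelian varieties with good reduction outside $p$, where $A/K$ acquires $a$ rational $\ell$-torsion points, and Cartier duality identifies this top quotient with a $\mu_\ell^a$ subgroup of $\mathcal{A}^\vee[\ell^n]$. Since the hypothesis is symmetric under Cartier duality, the same rearrangement is available for $A^\vee$, and the produced $\mu_\ell^a$ must sit at the bottom of $\mathcal{A}^\vee[\ell^n]$. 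Combining this with the polarization $\lambda\colon A\to A^\vee$ and the uniqueness results on prolongations from Section~\ref{sec:strategy_prolongations}, one compares the configurations on $A$ and $A^\vee$ to force $a=c=0$. Carefully carrying through the comparison---matching up the top $\underline{\ZZ/\ell\ZZ}$'s of $\mathcal{A}[\ell^n]$ with the bottom $\mu_\ell$'s of its Cartier dual across the polarization isogeny---is the chief technical hurdle.
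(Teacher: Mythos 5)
The first half of your argument (rearranging the filtration from Lemma~\ref{lem:filtration_l_torsion} by adjacent swaps, using $\Ext^1_{\mathcal{C}_{p,\ell}}(V,\underline{\ZZ/\ell\ZZ})=0$ and its Cartier dual to push all $\underline{\ZZ/\ell\ZZ}$ factors to the top and all $\mu_\ell$ factors to the bottom) is exactly what the paper does, and is fine. But you have correctly identified that showing the top and bottom layers are trivial is ``the real content'', and there you have a genuine gap: the route you sketch --- comparing $\mathcal{A}[\ell^n]$ with $\mathcal{A}^\vee[\ell^n]$ via Cartier duality and a polarization at a \emph{fixed} level $n$ --- cannot produce the required contradiction. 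The configuration ``$\underline{\ZZ/\ell\ZZ}$'s on top, $\mu_\ell$'s on the bottom'' is precisely self-dual and is perfectly compatible with a principal polarization (for an elliptic curve with a rational point of order $\ell$, $E[\ell]$ is an extension of $\underline{\ZZ/\ell\ZZ}$ by $\mu_\ell$ and is self-dual under the Weil pairing), so duality plus a polarization imposes no constraint here. A second, smaller inaccuracy: the top quotient need not be $\underline{\ZZ/\ell\ZZ}^a$; it is only \emph{filtered} by copies of $\underline{\ZZ/\ell\ZZ}$, and such an iterated extension can be a nonconstant twist (see the examples $(\underline{\ZZ/2\ZZ}\times\underline{\ZZ/2\ZZ})^\chi$ and $(\underline{\ZZ/4\ZZ})^\chi$ in Section~\ref{subsubsection:examples_of_Extensions}), so even identifying it with a power of $\underline{\ZZ/\ell\ZZ}$ requires an argument.

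What the paper actually does is a counting argument as $n\to\infty$. If the top layer of $\mathcal{A}[\ell]$ is nonzero, then since $\mathcal{A}[\ell^n]$ is filtered by $n$ copies of $\mathcal{A}[\ell]$, the rearranged filtration of $\mathcal{A}[\ell^n]$ has a top quotient $C_n$ with at least $n$ composition factors $\underline{\ZZ/\ell\ZZ}$, sitting in $0\to M_n\to\mathcal{A}[\ell^n]\to C_n\to 0$. Proposition~\ref{prop:field_of_points_is_a_p_power_extension} shows $\Gamma=\mathrm{Gal}(\QQ(C_n)/\QQ)$ is an $\ell$-group unramified outside $p$; Kronecker--Weber forces $\Gamma/D(\Gamma)$ to be cyclic, and the Frattini argument of Lemma~\ref{lemma:G_p-group_G/D(G)_cyclic_gives_G_cyclic} then makes $\Gamma$ cyclic, so $\QQ(C_n)\subseteq\QQ(\zeta_p)$ and $C_n$ is constant over $\QQ(\zeta_p)$. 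Hence the quotient abelian variety $A/(M_n)_\QQ$, which is isogenous to $A$ and therefore has the same number of points modulo a fixed prime $\mathfrak{q}$ of $\ZZ[\zeta_p,\frac{1}{p}]$, acquires at least $\ell^n$ points over $\QQ(\zeta_p)$; for $n$ large this violates the Weil bounds. The bottom $\mu_\ell$-layer is then killed by applying the same argument to the Cartier dual. This global input (isogeny invariance of point counts plus Weil bounds over the fixed field $\QQ(\zeta_p)$) is the missing idea in your proposal, and without it the statement at a single level $n$ is not provable by duality considerations alone.
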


\begin{proof}
For every positive integer $n$, the group scheme $\mathcal{A}[\ell^n]$ admits a filtration with successive quotients isomorphic to $\mathcal{A}[\ell]$. Hence, it suffices to prove the result for $\mathcal{A}[\ell]$.

    By Lemma \ref{lem:filtration_l_torsion}, there exists a filtration by closed finite flat subgroup schemes in $\mathcal{C}_{p, \ell}$:
    \[\mathcal{A}[\ell] = G_0 \supseteq G_1 \supseteq \ldots \supseteq G_m = 0\]
     such that the successive quotients $G_i/G_{i+1}$ are simple objects of $\mathcal{C}_{p, \ell}$.
    
    Furthermore, we may suppose that there exists $j,k$ such that for $i < j$, the quotients $G_{i}/G_{i+1}$ are either zero or isomorphic to $\underline{\ZZ/\ell\ZZ}$ and for $i>k$ the quotients $G_i/G_{i+1}$ are either zero or isomorphic to $\mu_\ell$.
    Indeed, suppose we have $G_i/G_{i+1} \cong V$ a simple group scheme not isomorphic to $\underline{\ZZ/\ell\ZZ}$  and $G_{i+1}/G_{i+2} \cong \underline{\ZZ/\ell\ZZ}$.
    Then we have an exact sequence of group schemes \cite[Prop. 5.2.9]{SGA3_tome_I}
    \[0 \rightarrow G_{i+1}/G_{i+2} \rightarrow G_{i}/G_{i+2} \rightarrow G_{i}/G_{i+1} \rightarrow 0 \]
    and so $G_i/G_{i+2}$ is an extension of $V$ by $\underline{\ZZ/\ell\ZZ} $.
    
    By assumption $\Ext^1_{\mathcal{C}_{p,\ell}}(V, \underline{\ZZ/\ell\ZZ})=0$, so $G_i/G_{i+2} \cong V \times \underline{\ZZ/\ell\ZZ}$ and thus we may replace $G_{i+1}$ by a group scheme $G_{i+1}'$ such that $G_i/G_{i+1}' \cong \underline{\ZZ/\ell \ZZ}$ and $G_{i+1}'/G_{i+2} \cong V$ \cite[Prop. 5.2.7]{SGA3_tome_I}.
    This proves there exists such a $j$.
    
    By Cartier duality, 
        $\Ext^1_{\mathcal{C}_{p,\ell}}(\mu_\ell, V) =0$ for any simple object $V$ of $\mathcal{C}_{p,\ell}$ not isomorphic to $\mu_\ell$.
        Repeating the above argument with $\Ext^1_{\mathcal{C}_{p,\ell}}(\mu_\ell, V) =0$ proves the existence of such a $k$.

    We deduce that if $\mathcal{A}[\ell]/G_j$ is non-zero, then it has a filtration with successive simple quotients isomorphic to $\underline{\ZZ/\ell\ZZ}$.
    We shall show that  $\mathcal{A}[\ell]/G_j$ is zero.

    Suppose $\mathcal{A}[\ell]/G_j$ is non-zero.
    We first prove by induction that for any positive integer $n$, there is an exact sequence
    \[0 \rightarrow M_n \rightarrow \mathcal{A}[\ell^n] \rightarrow C_n \rightarrow 0\]
    where $C_n$ has a filtration with at least $n$ successive quotients isomorphic to $\underline{\ZZ/\ell\ZZ}$.
    
    Multiplication by $\ell^{n-1}$ induces an exact sequence $0 \rightarrow \mathcal{A}[\ell^{n-1}] \rightarrow \mathcal{A}[\ell^n] \rightarrow \mathcal{A}[\ell] \rightarrow 0$.
    By induction, $\mathcal{A}[\ell^{n-1}]$ has a quotient $C_{n-1}$ which admits a filtration with at least $n-1$ successive quotients isomorphic to $\underline{\ZZ/\ell\ZZ}$.
    Using the same argument as before, we obtain a filtration of $\mathcal{A}[\ell^{n}]$ with at least $n$ successive quotients isomorphic to $\underline{\ZZ/\ell\ZZ}$, as required.
    
    By Proposition \ref{prop:field_of_points_is_a_p_power_extension}, the group $\Gamma \coloneqq\Gal(\QQ(C_n)/\QQ)$ has $\ell$-power order and $\QQ(C_n)$ is unramified outside of $p$.
    It follows from the Kronecker-Weber Theorem that the largest abelian subfield of $\QQ(C_n) $ is contained in $\QQ(\zeta_p)$.
    Thus $\Gamma/D(\Gamma)$ is cyclic.
    We deduce $\Gamma$ is also cyclic by Lemma \ref{lemma:G_p-group_G/D(G)_cyclic_gives_G_cyclic}, whence $\QQ(C_n) \subseteq  \QQ(\zeta_p)$.
    
    Let $\q$ be a prime of $\ZZ[\zeta_p, \frac{1}{p}]$.
    For every $n$, each abelian variety $A/(M_n)_\QQ$ is isogenous to $A$ over $\QQ$.
    In particular, each $A/(M_n)_\QQ$ has the same number of rational points as $A$ modulo $\q$.
    As $C_n$ is constant over $\QQ(\zeta_p)$, the abelian variety $A/(M_n)_\QQ$ has at least $\ell^n$ points over $\QQ(\zeta_p)$.
    Thus the same is true for the reduction of $A/(M_n)_\QQ$ modulo $\q$.
    However, the reduction of $A$ modulo $\q$ only has a finite number of rational points modulo $\q$, which contradicts the above.
    We deduce $\mathcal{A}[\ell] = G_j$.

    Recall that the Cartier dual of $\mathcal{A}[\ell]$ is isomorphic to $\mathcal{A}^t[\ell]$, the $\ell$-torsion   of the dual of $\mathcal{A}$, see \cite[Cor. 27.214]{Goertz_Wedhorn_II}.
    In particular, the Cartier dual of $\mathcal{A}[\ell]$ is the $\ell$-torsion of the Néron model over $\ZZ[\frac{1}{p}]$ of a semistable abelian variety over $\QQ$ with good reduction outside of $p$.
    This allows us to apply the above argument to the Cartier dual of $\mathcal{A}[\ell]$, thus proving that $G_{k+1}=0$. 
\end{proof}

From now on, suppose the hypotheses of Proposition \ref{prop:filtration_has_no_Z/lZ_nor_mu_l} hold for some choice of primes $p$ and $\ell$.
That is, for any simple object $V$ of $\mathcal{C}_{p,\ell}$ not isomorphic to $\underline{\ZZ/\ell \ZZ}$, we have
    $\Ext^1_{\mathcal{C}_{p,\ell}}(V, \underline{\ZZ/\ell\ZZ}) =0$.

If any simple object in $\mathcal{C}_{p,\ell}$ is isomorphic to either $\underline{\mathbb{Z}/\ell \mathbb{Z}}$ or $\mu_\ell$, then the proposition readily implies that a semistable abelian variety over $\mathbb{Q}$ with good reduction outside $p$ cannot exist. This situation occurs, for instance, in the examples of Section \ref{sec:FS_examples}.

On the other hand, should semistable abelian varieties over $\mathbb{Q}$ with good reduction outside $p$ exist, there must be at least one simple object $V$ in $\mathcal{C}_{p,\ell}$ that is not isomorphic to either $\underline{\mathbb{Z}/\ell \mathbb{Z}}$ or $\mu_\ell$.
In this case, and under suitable hypotheses, it remains possible to aim for a classification of all such abelian varieties, as we now explain.

Recall that in our typical situation we know an explicit abelian variety $A/\mathbb{Q}$ with good reduction outside of $p$ and bad semistable reduction at $p$. The final goal is to prove that this abelian variety is unique up to isogeny and powers. The most favourable situation for proving such a statement arises when, up to isomorphism, the simple object $V$ in $\mathcal{C}_{p,\ell}$ described above is unique. We suppose that this is the case in the rest of this discussion. 

Proposition \ref{prop:filtration_has_no_Z/lZ_nor_mu_l} then implies that $V$ is a closed subgroup scheme of $\mathcal{A}[\ell]$, where $\mathcal{A} \to \Spec(\ZZ[\frac{1}{p}])$ is the Néron model of $A$. Hence, $V$ is the kernel of some $\ell$-power isogeny $\pi: \mathcal{A} \to \mathcal{A}'$. Assume $\mathcal{A}'=\mathcal{A}$, so that $\pi \in \mathrm{End}_{\ZZ[\frac{1}{p}]}(\mathcal{A})$ and we can write $V=\mathcal{A}[\pi]$. This happens, for instance, when $\ell=2$ and $p\in \{11, 19\}$ where one has $V=J_0(p)[2]$. The case $p=11$ is treated in \cite{Schoof_one_bad_prime} while the case $p=19$ is presented in Subsection \ref{subsec:simple_19}. 

Let $B/\mathbb{Q}$ be an abelian variety with good reduction outside of $p$ and bad semistable reduction at $p$, and let $\mathcal{B}\to \Spec(\ZZ[\frac{1}{p}])$ be its Néron model. Then, for all $n\in \mathbb{N}$, the $\ell^n$-torsion scheme $\mathcal{B}[\ell^n]$ admits a filtration with successive quotients isomorphic to $\mathcal{A}[\pi]$ by Proposition \ref{prop:filtration_has_no_Z/lZ_nor_mu_l}. The main goal of this final part of the classification strategy is to show that the only group schemes in $\mathcal{C}_{p,\ell}$ that admit such a filtration are isomorphic to finite direct sums of group schemes of the form $\mathcal{A}[\pi^n]$ for some $n \in \mathbb{N}$.
More precisely, we want to show that for every non-trivial object $G$ in $\mathcal{C}_{p,\ell}$ admitting a sequence of subgroup schemes
\[
G=G_0 \supseteq G_1 \supseteq ... \supseteq G_m=0
\]
such that the successive quotients $G_i/G_{i+1}$ are isomorphic to $\mathcal{A}[\pi]$ there exists integers $r, m_1,...,m_r \in \mathbb{Z}_{>0}$ such that
\begin{equation} \label{eq:isomorphism_direct_sum}
    G \cong \bigoplus_{i=1}^r \mathcal{A}[\pi^{m_i}].
\end{equation}
Indeed, the existence of compatible isomorphisms  as in \eqref{eq:isomorphism_direct_sum} for $G=\mathcal{B}[\ell^n], n \in \mathbb{N}$, would determine the $\ell$-divisible group associated to $B$ up to isomorphism.

\begin{example}
    Suppose $\pi=\ell$ and $\mathcal{A}$ is an elliptic curve. Let $g=\mathrm{dim}(\mathcal{B})$. If isomorphism \eqref{eq:isomorphism_direct_sum} holds with $G=\mathcal{B}[\ell^n]$ for all $n \in \mathbb{N}$, then inspecting the $\overline{\mathbb{Q}}$-points of both sides of the isomorphism shows that $r=g$ and $m_1=...=m_n=n$. Hence, for all $n \in \mathbb{N}$ we have
    \[
    \mathcal{B}[\ell^n] \cong (\mathcal{A}[\ell^{n}])^g \cong \mathcal{A}^g[\ell^{n}].
    \]
\end{example}

A natural way to show that isomorphism \eqref{eq:isomorphism_direct_sum} holds for all objects $G$ as above is to proceed by induction on the rank of $G$.
The base case of the induction is clear.
As $G/G_1=G_0/G_1 \cong \mathcal{A}[\pi]$, the inductive hypothesis yields
\[
G_1 \cong \bigoplus_{i=1}^r \mathcal{A}[\pi^{m_i}]
\]
for some positive integers $r$ and $m_i$. Hence, $G$ sits in the exact sequence
\[
0 \to \bigoplus_{i=1}^r \mathcal{A}[\pi^{m_i}] \to G \to \mathcal{A}[\pi] \to 0
\]
that represents an element of
\[
\Ext^1_{\mathcal{C}_{p,\ell}}(\mathcal{A}[\pi],\bigoplus_{i=1}^r \mathcal{A}[\pi^{m_i}]) \cong \bigoplus_{i=1}^r \Ext^1_{\mathcal{C}_{p,\ell}}(\mathcal{A}[\pi],\mathcal{A}[\pi^{m_i}]).
\]

Clearly, the class represented by the sequence:
\[
0 \to \mathcal{A}[\pi^{m_j}] \to \mathcal{A}[\pi^{m_j+1}] \to \mathcal{A}[\pi] \to 0
\]
is non-trivial in $\Ext^1_{\mathcal{C}_{p,\ell}}(\mathcal{A}[\pi],\mathcal{A}[\pi^{m_j}])$. If this were to be the only non-trivial class in this extension group, we would have
\begin{equation} \label{eq:isomorphism_ext}  \Ext^1_{\mathcal{C}_{p,\ell}}(\mathcal{A}[\pi],\bigoplus_{i=1}^r \mathcal{A}[\pi^{m_i}]) \cong \bigoplus_{i=1}^r \mathbb{F}_2 \cdot e_i
\end{equation}
where each class $e_j$ is represented by the sequence
\[
0 \to \bigoplus_{i=1}^r \mathcal{A}[\pi^{m_i}] \to \bigoplus_{i\neq j} \mathcal{A}[\pi^{m_i}] \oplus \mathcal{A}[\pi^{m_j+1}] \to \mathcal{A}[\pi] \to 0.
\]
However, even under the assumption that isomorphism \eqref{eq:isomorphism_ext} holds, we cannot yet conclude that $G$ has the desired form.
Indeed, although the middle terms of the sequences in $e_j$ are isomorphic to group schemes of the form $\bigoplus_{i=1}^s \mathcal{A}[\pi^{n_i}]$, it is not clear a priori that the middle terms of \textit{all} sequences representing elements in $\smash{ \Ext^1_{\mathcal{C}_{p,\ell}}(\mathcal{A}[\pi],\bigoplus_{i=1}^r \mathcal{A}[\pi^{m_i}])}$ have this same form.

To better understand  the problem, let $\mathcal{D}$ be the full subcategory of $\mathcal{C}_{p,\ell}$ whose objects are of the form $\bigoplus_{i=1}^s \mathcal{A}[\pi^{n_i}]$. Then the classes $e_i$ are elements of
\begin{equation} \label{eq:inclusion_ext}
    \Ext^1_{\mathcal{D}}(\mathcal{A}[\pi],\bigoplus_{i=1}^r \mathcal{A}[\pi^{m_i}]) \subseteq \Ext^1_{\mathcal{C}_{p,\ell}}(\mathcal{A}[\pi],\bigoplus_{i=1}^r \mathcal{A}[\pi^{m_i}]).
\end{equation}
where the set on the left consists of all extension classes represented by objects in $\mathcal{D}$. A priori, for every pair of objects $A, B$ in $\mathcal{D}$ the set $\Ext^1_{\mathcal{D}}(A,B)$ is just a subset of $\Ext^1_{\mathcal{C}_{p,\ell}}(A,B)$. However, if  $\Ext^1_{\mathcal{D}}(A,B)$ were to be a group for all $A$ and $B$, then isomorphism \eqref{eq:isomorphism_ext} would imply that \eqref{eq:inclusion_ext} is actually an equality. Thus, $G$ would be an object of $\mathcal{D}$, as required.

Since, the Baer sum of two extensions $0 \to G_1 \to E \to G_2 \to 0$ and $0 \to G_1 \to E' \to G_2 \to 0$ is the kernel of a morphism $E\times E' \to G_2$ modulo a closed subgroup scheme isomorphic to $G_1$, one is led to prove that the category $\mathcal{D}$ is closed under taking kernels and cokernels.

This is, in a simplified form, the approach taken in \cite[Section 8]{Schoof_one_bad_prime}, which is  formulated there in a slightly more general setting. Schoof considers $\pi$ as an element of the endomorphism ring of the $\ell$-divisible group associated to $\mathcal{A}$, subject to additional assumptions. He then establishes conditions ensuring that $\Ext^1_{\mathcal{C}_{p,\ell}}(\mathcal{A}[\pi], \mathcal{A}[\pi^{m_j}])$ has order $2$ \cite[Lem. 8.1]{Schoof_one_bad_prime}, and that the full subcategory of $\mathcal{C}_{p,\ell}$ whose objects are of the form $\bigoplus_{i=1}^s \mathcal{A}[\pi^{n_i}]$ is closed under taking kernels and cokernels \cite[Cor. 8.2]{Schoof_one_bad_prime}. The discussion culminates in \cite[Thm. 8.3]{Schoof_one_bad_prime}, which provides a classification, under the suitable assumptions that we have partially mentioned here, of the $\ell$-divisible groups associated with abelian varieties with bad semistable reduction at $p$ only.

For the reader’s convenience, we restate the result below in a form adapted to our setting, specialising the hypotheses and notation to the level of generality needed in this paper.

\begin{theorem}{(\cite[Thm. 8.3]{Schoof_one_bad_prime})} \label{thm:Schoof_classification_p_divisible groups}
    Let $p$ and $\ell$ be primes, and $G=\{G_n\}$ an $\ell$-divisible group over $\mathbb{Z}[\frac{1}{p}]$. Suppose $R=\mathrm{End}(G)$ is a discrete valuation ring with uniformizer $\pi$ and residue field $k=R/\pi R$. Suppose furthermore that:
    \begin{itemize}
        \item Every group scheme $G_n$ is an object in the category $\mathcal{C}_{p,\ell}$;
        \item The map $\mathrm{Hom}_{\mathbb{Z}[\frac{1}{p}]}(G[\pi],G[\pi]) \to \Ext^1_{\mathcal{C}_{p,\ell}}(G[\pi],G[\pi])$ obtained by applying the $\mathrm{Hom}_{\mathbb{Z}[\frac{1}{p}]}(G[\pi],-)$ functor to the exact sequence
        \[
        0 \to G[\pi] \to G[\pi^2] \to G[\pi] \to 0
        \]
        is an isomorphism of one-dimensional $k$-vector spaces.
    \end{itemize}
    Let $H=\{H_n\}$ be an $\ell$-divisible group over $\mathbb{Z}[\frac{1}{p}]$ for which the following hold:
    \begin{itemize}
        \item Every group scheme $H_n$ is an object in the category $\mathcal{C}_{p,\ell}$;
        \item Each $H_n$ admits a filtration with flat closed subgroup schemes and successive quotients isomorphic to $G[\pi]$.
    \end{itemize}
    Then $H$ is isomorphic to $G^g$ for some $g\geq 0$.
\end{theorem}

\section{The Fontaine--Schoof strategy: introductory examples}
\label{sec:FS_examples}

We now put into practice the Fontaine-Schoof strategy illustrated in Section \ref{Section:FS_strategy} by proving that there is no semistable abelian variety over $\mathbb{Q}$ with good reduction outside exactly one of 3 or 5.

For \textbf{Step 1} of the strategy, we determine the fields of definition of points of simple objects in $\mathcal{C}_3$ and $\mathcal{C}_5$. The following two lemmas will be useful.

\begin{lemma}
\label{lemma:running_examples_points_generate_ell_group}
    Let $p,\ell$ be distinct primes and $L \subseteq \overline{\mathbb{Q}}$ be the smallest field containing all the finite extensions of $\mathbb{Q}$ satisfying properties $(a)$--$(c)$ from Lemma \ref{lem:stability_properties_field_extension}.

    Suppose there is a Galois extension $F/\QQ$ contained in $L$ such that $\Gal(L/F)$ is an $\ell$-group.
    Then every simple object in $\mathcal{C}_{p, \ell}$ has its points defined over $F$.
    In particular, any such object has order $\ell$.
\end{lemma}

\begin{proof}
    By Corollary \ref{Cor:properties_of_annihilated_by_ell_with_Fontaine}, the field of points of any simple group scheme in $\mathcal{C}_{p, \ell}$  satisfies the assumptions of Lemma \ref{lem:stability_properties_field_extension}.
    For any prime $\ell$, the action of a finite group on a simple module over a field of characteristic $\ell$ factors through the quotient by its maximal normal $\ell$-subgroup \cite[Cor. 6.2.2]{Webb}.
    Hence, $\Gal(L/F)$ acts trivially on the points of any simple object in $\mathcal{C}_{p, \ell}$.

    It follows that the points of any simple object in $\mathcal{C}_{p,\ell}$ are defined over $F$.
    Thus by étaleness \cite[pg. 137]{Tate_FLT} we have that any simple object $G$ in $\mathcal{C}_{p,\ell}$ satisfies $G_F \cong \ZnZconst{\ell}$.
    In particular $G$ has order $\ell$.
\end{proof}

The following lemma is written in much greater generality than what is needed in this section for reasons of versatility.
This formulation will be useful in Sections \ref{sec:19} and \ref{sec:simple_objects_of_order_4}.
The impatient reader looking to get on with the proofs of the main results in this section should feel free to skip it, filling in the easy gaps left behind by its absence in what follows.

\begin{lemma}
\label{lem:explicit_field_extensions_satisfying_the_bounds}
Let $p$ be an odd prime and $L \subseteq \overline{\mathbb{Q}}$ be the smallest field containing all the finite extensions of $\mathbb{Q}$ satisfying properties $(a)$--$(c)$ from Lemma \ref{lem:stability_properties_field_extension} with $\ell =2$.

    Then the following hold true:
    \begin{enumerate}
        \item The maximal abelian extension of $\QQ$ contained in $L$ is $\QQ(i, \sqrt{p})$.
        \item If $p \equiv 1 \mod{4}$ then for any $\varepsilon \in \ZZ[\frac{1+\sqrt{p}}{2}]^*$ the field $\QQ(i,\sqrt{\varepsilon})$ is Galois and contained in $L$.
        \item If $p \equiv 3 \mod{8}$, then the ray class field $F_{(2)}$ of $\QQ(\sqrt{-p})$ of modulus $(2)$ is contained in $L$.
        Moreover, if $p>3$, then $[F_{(2)}\colon\QQ(\sqrt{-p})]$ is divisible by 3.
    \end{enumerate}
\end{lemma}

\begin{proof}
    An easy check shows both $\QQ(i) $ and $\QQ(\sqrt{p})$, are contained in $L$.
    Hence it suffices to prove any other abelian extension of $\QQ$ inside $L$ is contained in $\QQ(i, \sqrt{p})$.

    Let $F/\QQ$ be an abelian extension contained in $L$.
    By assumption, $\Gamma^{2}_2=1$ and $\Gamma^1_p =1$ since $L$ is tamely ramified at $p$.
    Thus, by class field theory \cite[Thm. V.6.2, Cor. VI.5.7]{Neukirch_ANT_book}, $F$ is contained in the ray class field of modulus $4p\infty$, where we let $\infty$ denote the infinite prime of $\QQ$.
    That is, $F$ is contained in $\QQ(i, \zeta_p)$ where $\zeta_p \in \overline{\QQ}$ is a primitive $p$th root of unity \cite[pg. 366]{Neukirch_ANT_book}.
    The Galois group $\Gal(\QQ(i, \zeta_p)/\QQ)$ is isomorphic to $C_2 \times C_{p-1}$ and its inertia subgroup at $p$ is $\Gal(\QQ(i, \zeta_p)/\QQ(i)) \cong C_{p-1}$.
    Since $F \subseteq L$, the inertia groups at primes above $p$ have order at most 2, thus $F$ must be contained in $\QQ(i, \sqrt{p})$.

    We shall now prove (2).
    If $\varepsilon \in \{\pm 1\}$, then we are done, so suppose not.
    Let $m_{\varepsilon}$ be the minimal polynomial of $\varepsilon$ over $\mathbb{Q}$.
    It has degree 2 since $\varepsilon \neq \pm 1$.
    Let $\varepsilon'$ denote the other root of $m_\varepsilon$.
    The polynomial $m_\varepsilon(x^2)$ is monic with integer coefficients and has roots $\pm\sqrt{\varepsilon}, \pm\sqrt{\varepsilon'}$.
    As $\varepsilon \cdot \varepsilon' = \pm 1$, we have $\sqrt{\varepsilon}\cdot \sqrt{\varepsilon'}= \sqrt{\pm1}$.
    Hence $F = \QQ(i,\sqrt{\varepsilon})$ is the compositum of the splitting field of $m_\varepsilon(x^2)$ and $\QQ(i)$.
    In particular $F$ is Galois and the Galois group $\Gal(F/\QQ(\sqrt{p}))$ is an elementary abelian 2-group.

    As $\QQ(\sqrt{p})$ ramifies exactly at $p$, it suffices to show the extension $F/\QQ(\sqrt{p})$ is unramified outside of 2 and it satisfies property $(c)$ of Lemma \ref{lem:stability_properties_field_extension}.
    We have already seen that these properties are satisfied by the subextension $\QQ(i,\sqrt{p})/\QQ(\sqrt{p})$.
    By Example \ref{example:Katz-Mazur_grp_schms} there is a finite flat group scheme $G_{\varepsilon}[2]$ defined over $\ZZ[\frac{1+\sqrt{p}}{2}]$ which is annihilated by 2 satisfying $\QQ(\sqrt{p})(G_\varepsilon[2]) = \QQ(\sqrt{\varepsilon})$.
    In particular, as $G_\varepsilon[2]$ is étale over $\ZZ[\frac{1+\sqrt{p}}{2}, \frac{1}{2}]$ by \cite[pg. 138, (II)]{Tate_FLT}, the field extension $\QQ(\sqrt{\varepsilon})/\QQ(\sqrt{p})$ is unramified outside of 2.
    Moreover, by \cite[Thm. A]{Fontaine_pas_de_variete_abelienne} the extension $\QQ(\sqrt{\varepsilon})/\QQ(\sqrt{p})$ also satisfies the conditions on the higher ramification groups at the primes above 2.
    It follows that $F/\QQ(\sqrt{p})$ is unramified outside of $2$ and moreover applying \cite[Prop. IV.14]{Serre_book_1979} we find  $F/\QQ(\sqrt{p})$ also satisfies the conditions on the higher ramification groups at the primes above 2.
    As noted above, this completes the proof.

    We shall now prove (3).
    As $p \equiv 3 \mod{8}$, the prime 2 is inert in $\QQ(\sqrt{-p})$ and only $p$ ramifies in $\QQ(\sqrt{-p})$.
    Thus $\QQ(\sqrt{-p})$ and its ray class field of modulus 2 is contained in $L$, since it is at most tamely ramified at 2.
    Let us write $\OO = \ZZ[\frac{1+ \sqrt{-p}}{2}]$, the maximal order in $\QQ(\sqrt{-p})$.
    Writing $\mathrm{Cl}_{(2)}(F)$ and $\mathrm{Cl}(F)$ for the ray class group of modulus $(2)$ of $F=\QQ(\sqrt{-p})$ and class group of $F=\QQ(\sqrt{-p})$ respectively, we have the following exact sequence \cite[Prop. 3.2.3]{Cohen_advanced_computational_ANT}:
    \[\OO^* \rightarrow (\OO/2\OO)^* \rightarrow \mathrm{Cl}_{(2)}(F) \rightarrow\mathrm{Cl}(F) \rightarrow 1.\]
    For $p>3$, we have $\OO^*= \{\pm 1\}$.
    Using that $2$ is inert in $F$, the above exact sequence becomes
    \[1 \rightarrow C_3 \rightarrow \mathrm{Cl}_{(2)}(F) \rightarrow \mathrm{Cl}(F) \rightarrow 1,\]
    which completes the proof.
\end{proof}

We now determine the fields of points of simple objects in $\mathcal{C}_3$ and $\mathcal{C}_5$.

\begin{proposition}
\label{prop:running_example_simple_objects}
    Any simple object in $\mathcal{C}_3$ has its points defined over $\QQ$.
    In particular, any such object has order 2.
\end{proposition}

\begin{proof}
    By Corollary \ref{Cor:properties_of_annihilated_by_ell_with_Fontaine}, the field of points of any simple group scheme in $\mathcal{C}_{3}$  satisfies the assumptions of Lemma \ref{lem:stability_properties_field_extension}.
Let $L \subseteq \overline{\mathbb{Q}}$ be the smallest field containing all the finite extensions of $\mathbb{Q}$ satisfying properties $(a)$--$(c)$ from Lemma \ref{lem:stability_properties_field_extension} with $p=3$ and $\ell=2$.

By Corollary \ref{cor:bound_root_discriminant}, the root discriminant of $L$ is less than $4\sqrt{3} < 6.93$.
Diaz y Diaz's tables \cite{DiazyDiaz_minorations} show that any number field of degree at least 12 has root discriminant greater than 7.4.
Thus we deduce that $[L\colon\QQ]\leq 11$.
Lemma \ref{lem:explicit_field_extensions_satisfying_the_bounds} shows that $\QQ(i, \sqrt{3})$ is contained in $L$.
Thus either $L=\QQ(i, \sqrt{3})$ or $[L\colon \QQ]=8$.
In either case, $\Gal(L/\QQ)$ is a 2-group.
Thus the result follows from Lemma \ref{lemma:running_examples_points_generate_ell_group}.
\end{proof}

\begin{proposition}
\label{prop:running_example_simple_objects_C5}
    Any simple object in $\mathcal{C}_5$ has its points defined over $\QQ$.
    In particular, any such object has order 2.
\end{proposition}

\begin{proof}
    By Corollary \ref{Cor:properties_of_annihilated_by_ell_with_Fontaine}, the field of points of any simple group scheme in $\mathcal{C}_{5}$  satisfies the assumptions of Lemma \ref{lem:stability_properties_field_extension}.
    Let $L \subseteq \overline{\mathbb{Q}}$ be the smallest field containing all the finite extensions of $\mathbb{Q}$ satisfying properties $(a)$--$(c)$ from Lemma \ref{lem:stability_properties_field_extension} with $p=5$ and $\ell=2$.

    Let $\varepsilon = \frac{1+\sqrt{5}}{2}$, which is a fundamental unit in $\ZZ[\frac{1+\sqrt{5}}{2}]$.
    Thus Lemma \ref{lem:explicit_field_extensions_satisfying_the_bounds} shows that $F=\QQ(i, \sqrt{\varepsilon})$ is contained in $L$.
    Observe that $[F \colon \QQ]=8$.
    Indeed, if not then $\sqrt{\varepsilon} = a+bi$ for some $a, b \in \QQ(\sqrt{5})$.
    Squaring both sides we find $ab=0$.
    As $\sqrt{\varepsilon}$ is an algebraic integer, we see that if $b=0$ then $\varepsilon$ is not a fundamental unit in $\QQ(\sqrt{5})$, which gives a contradiction.
    If $a=0$, then we find that $-\varepsilon$ is not a fundamental unit in $\QQ(\sqrt{5})$ which again gives a contradiction.

    By Corollary \ref{cor:bound_root_discriminant}, the root discriminant of $L$ is less than $4\sqrt{5} < 8.95$.
    Diaz y Diaz's tables \cite{DiazyDiaz_minorations} show that any number field of degree at least $18$ has root discriminant greater than $9.3$.
    We deduce that $[L\colon\QQ]\leq 17$.
    As $[F\colon \QQ] =8$ either $L=F$ or $[L\colon \QQ]=16$.
    In either case, $\Gal(L/\QQ)$ is a 2-group.
    Thus the result follows from Lemma \ref{lemma:running_examples_points_generate_ell_group}.
\end{proof}

As the simple objects in $\mathcal{C}_3$ and $\mathcal{C}_5$ have order 2, \textbf{Step 2} of the classification strategy is taken care of by Proposition \ref{prop:grp_schms_order_2}, which shows they must be isomorphic to either $\underline{\ZZ/2\ZZ}$ or $\mu_2$.

\textbf{Step 3} follows immediately from applying Proposition \ref{prop:extensions_mu_ell_by_Z_mod_ell_Z}.
However, in order to illustrate the machinery developed in Subsection \ref{subsection:extensions} for classifying extensions, we now prove the relevant part of Proposition \ref{prop:extensions_mu_ell_by_Z_mod_ell_Z} for our needs.

\begin{lemma}
\label{lem:running_example_extensions_mu_2_by_Z_mod_2_Z_over_1/3}
Let $p$ be a prime congruent to $ \pm 5$ modulo 8.
Then $\Ext^1_{\ZZ[\frac{1}{p}]}(\mu_2, \underline{\ZZ/2\ZZ})$ is trivial.
\end{lemma}

\begin{proof}
    We shall use Theorem \ref{thm:Mayer_Vietoris} with $R = \ZZ[\frac{1}{p}]$ and $\ell =2$.
    We begin by analysing the different groups of homomorphisms.
    For $S=\ZZ[\frac{1}{p}]$ or $\ZZ_2$, we have that $\mu_2$ is connected, whereas the connected component of the identity of $\ZnZconst{2}$ is the trivial group.
    As morphisms of schemes are continuous, it follows that $\mathrm{Hom}_S(\mu_2, \ZnZconst{2})=0$.
    For $S= \ZZ[\frac{1}{2p}]$ or $\QQ_2$ we have that $(\mu_2)_S \cong (\ZnZconst{2})_S$ is étale.
    Since the étale fundamental group of $S$ acts trivially on the geometric points of $(\ZnZconst{2})_S$, we have $\mathrm{Hom}_S(\mu_2, \ZnZconst{2}) \cong \mathrm{Hom}(\FF_2, \FF_2) \cong \FF_2$.
    Thus the Mayer-Vietoris sequence reduces to \[0 \rightarrow \FF_2 \rightarrow\FF_2 \rightarrow \Ext^1_{\ZZ[\frac{1}{p}]}(\mu_2, \underline{\ZZ/2\ZZ}) \rightarrow \Ext^1_{\ZZ_2}(\mu_2, \underline{\ZZ/2\ZZ}) \times \Ext^1_{\ZZ[\frac{1}{2p}]}(\mu_2, \underline{\ZZ/2\ZZ}).\]
    Hence it suffices to show that any extension in $\Ext^1_{\ZZ[\frac{1}{p}]}(\mu_2, \underline{\ZZ/2\ZZ})$ splits when base changed to either $\ZZ_2$ or $\ZZ[\frac{1}{2p}]$.
    
    Let $G \in \Ext^1_{\ZZ[\frac{1}{p}]}(\mu_2, \underline{\ZZ/2\ZZ})$ so $G$ sits in an exact sequence
    \[0 \rightarrow \underline{\ZZ/2\ZZ} \rightarrow G \rightarrow \mu_2 \rightarrow 0 .\]
    As the connected component of $\underline{\ZZ/2\ZZ}$ over $\ZZ_2$ is trivial, we see the connected component of $G_{\ZZ_2}$ is $(\mu_2)_{\ZZ_2}$.
    Thus the connected-étale sequence \cite[(3.7)]{Tate_FLT} provides a section to the above exact sequence base changed to $\ZZ_2$.
    Hence $G_{\ZZ_2} \cong \mu_2 \times  \underline{\ZZ/2\ZZ}$, that is, the image of $G$ in $\Ext^1_{\ZZ_2}(\mu_2, \underline{\ZZ/2\ZZ})$ is trivial.
    In particular $\Gamma_{\QQ_2}$ acts trivially on $G(\overline{\QQ}_2)$.
    Since $G_{\ZZ[\frac{1}{2p}]}$ is étale the extension $\QQ(G)/\QQ$ is unramified outside of $p$.
    
    As $G_{\ZZ_2}$ is killed by 2, Corollary \ref{Cor:if_n_kills_the_base_change_then_it_kills_you_too} asserts that $G$ is also killed by 2.
    Lemma \ref{lemma:trivial_by_trivial_implies_order_divides_n} implies that the Galois group of $\QQ(G)/\QQ$ is an elementary abelian 2-group.
    Thus $\QQ(G)/\QQ$ is at most tamely ramified at $p$, so by the Kronecker-Weber Theorem either $\QQ(G)=\QQ$ or $\QQ(\sqrt{p^*})$ where $p^* = (-1)^{\frac{p-1}{2}}p$.
    But $p^* \equiv 5 \mod{8}$, so 2 is inert in $\QQ(\sqrt{p^*})$.
    Hence we must have $\QQ(G) =\QQ$ since $\Gamma_{\QQ_2}$ acts trivially on $G(\overline{\QQ}_2)$.
    Hence by étaleness \cite[pg. 137]{Tate_FLT}, $G_{\ZZ[\frac{1}{2p}]} \cong \mu_2 \times  \underline{\ZZ/2\ZZ}$, which completes the proof.
\end{proof}


    

We can now proceed to \textbf{Step 4} and conclude our classification.

\begin{theorem}
    Let $p$ be either 3 or 5.
    Then there is no semistable abelian variety over $\QQ$ with good reduction outside of $p$.
\end{theorem}

\begin{proof}
    By Propositions \ref{prop:running_example_simple_objects}, \ref{prop:running_example_simple_objects_C5} any simple object in $\mathcal{C}_p$ has order 2.
    Proposition \ref{prop:grp_schms_order_2} thus implies that any simple object in $\mathcal{C}_p$ is isomorphic to either $\mu_2$ or $\ZnZconst{2}$.
    By Lemma \ref{lem:running_example_extensions_mu_2_by_Z_mod_2_Z_over_1/3}, we have $\Ext^1_{\ZZ[\frac{1}{p}]}(\mu_2, \underline{\ZZ/2\ZZ})=0$.
    
    Suppose now for a contradiction that $A/\QQ$ is a semistable abelian variety with good reduction outside of $p$.
    Let $\mathcal{A}\to \Spec(\ZZ[\frac{1}{p}])$ be the Néron model of $A$ over $\ZZ[\frac{1}{p}]$.
    The previous paragraph allows us to apply Proposition \ref{prop:filtration_has_no_Z/lZ_nor_mu_l}.
    From which we deduce that $\mathcal{A}[2]$ has a filtration by closed finite flat group schemes in $\mathcal{C}_p$ such that the successive quotients are simple objects in $\mathcal{C}_p$ not isomorphic to $\mu_2$ or $\ZnZconst{2}$.
    However, we have already shown any simple object in $\mathcal{C}_p$ is isomorphic to $\mu_2$ or $\ZnZconst{2}$. Contradiction.
\end{proof}

The motivated reader may verify that the simple objects in $\mathcal{C}_{13}$ are isomorphic to either $\underline{\ZZ/2\ZZ}$ or $\mu_2$, thus the same argument shows there is no semistable abelian variety with good reduction outside of $13$.
\section{Semistable abelian varieties over $\QQ$ with bad reduction at 19 only} \label{sec:19}
In this section, we classify semistable abelian varieties over $\mathbb{Q}$ with bad reduction only at $19$. To do so, we once again follow the Fontaine–Schoof strategy, which is considerably more involved than in the examples discussed in Section \ref{sec:FS_examples}.

\subsection{Step 1} \label{subsec:19_step_1}
Recall that we denote the derived subgroup of a group $\Gamma$ by $D(\Gamma)$.
Setting $D^1(\Gamma) \coloneqq D(\Gamma)$ we define $D^n(\Gamma) \coloneqq D(D^{n-1}(\Gamma))$ for $n \geq 2$.

\begin{lemma}
\label{lemma:gp_thry}
Let $\Gamma$ be a finite group such that $D(\Gamma)/D^2(\Gamma) \cong C_3$ and $|D^2(\Gamma)| \leq 11$, then $D^2(\Gamma)$ is a 2-group.
\end{lemma}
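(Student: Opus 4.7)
Setting $H \coloneqq D(\Gamma)$ and $K \coloneqq D^2(\Gamma) = D(H)$, the hypotheses translate to $H/K \cong C_3$ (so $K$ is the commutator subgroup of $H$ and $H^{\mathrm{ab}} \cong C_3$) and $|H| = 3|K| \leq 33$. The plan is to rule out every non-$2$-power order in $\{3, 5, 6, 7, 9, 10, 11\}$ for $K$.

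For $|K| \in \{3, 5, 6, 9, 10, 11\}$, I would argue by direct inspection: enumerate the (at most five) isomorphism classes of groups of order $3|K|$ and compute their derived subgroups, verifying that in each case no group of order $3|K|$ has derived subgroup of order $|K|$ at all, so there is no candidate for $H$. For instance, all groups of orders $9$, $15$, and $33$ are abelian; among the five groups of order $18$, the nontrivial derived subgroups have orders $3$ or $9$; among the five groups of order $27$ the nontrivial derived subgroups have order $3$; and among the four groups of order $30$ the derived subgroups have orders $1$, $3$, $5$, $15$. None matches $|K|$ in the relevant case.

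The only value producing a genuine candidate for $H$ is $|K| = 7$, where $H = C_7 \rtimes C_3$ is the unique non-abelian group of order $21$ and satisfies $D(H) = C_7 \cong K$ together with $H/D(H) \cong C_3$. To exclude this case, I would study the conjugation action $\phi \colon \Gamma \to \mathrm{Aut}(H)$. Since $Z(H) = 1$, the restriction $\phi|_H$ is an isomorphism onto $\mathrm{Inn}(H)$, so in particular $\phi(H)$ has order $21$. The image $\phi(\Gamma)$ lies between $\mathrm{Inn}(H) = H$ and the holomorph $\mathrm{Aut}(H) = C_7 \rtimes C_6$ of order $42$, hence equals one of them; in either case $D(\phi(\Gamma)) = C_7$, since the abelianization of $\mathrm{Aut}(H)$ is $C_6$. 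But $\phi$ surjects onto its image, so $\phi(D(\Gamma)) = D(\phi(\Gamma))$, and the left-hand side equals $\phi(H)$, of order $21$ — a contradiction.

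The main obstacle is the case $|K| = 7$, which is the only instance where a candidate $H$ with the correct abelianization actually exists; without the holomorph argument this case cannot be handled by order considerations alone. The remaining orders are disposed of by a routine enumeration of small groups and their commutator subgroups.
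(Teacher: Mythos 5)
Your proof is correct, but it follows a genuinely different route from the paper's. The paper argues structurally: it introduces the minimal normal subgroup $\Psi$ of $D^2(\Gamma)$ containing its $3$-Sylow subgroups and $D^3(\Gamma)$, observes that $\Psi$ is characteristic (hence normal in $\Gamma$) with $D^2(\Gamma)/\Psi$ abelian of order prime to $3$, and invokes Lemma 3.1 of Schoof's cyclotomic paper to conclude that $D^2(\Gamma)$ must be a non-cyclic $2$-group or a $3$-group; the $3$-group case then dies because a $p$-group with cyclic abelianization is cyclic. You instead work with $H=D(\Gamma)$ directly, enumerating the groups of order $3|K|\leq 33$ and checking that, apart from $|K|=7$, none has derived subgroup of order $|K|$; your identification of $F_{21}=C_7\rtimes C_3$ as the unique genuine candidate is accurate (all your small-group data check out), and your exclusion of it via $\phi\colon\Gamma\to\mathrm{Aut}(F_{21})\cong C_7\rtimes C_6$, comparing $\phi(D(\Gamma))=D(\phi(\Gamma))$ of order $7$ with $\phi(H)\cong H$ of order $21$, is valid. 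The trade-off: the paper's argument dispatches all cyclic cases (including $C_7$) uniformly via one external lemma and even yields the slightly stronger conclusion that $D^2(\Gamma)$ is non-cyclic or trivial, and it would scale to larger order bounds with little extra work; your argument is self-contained and elementary but leans on the small-groups classification and needs the ad hoc automorphism-group computation for the one surviving candidate, which is precisely the case that Schoof's lemma handles for free since $C_7$ is cyclic of order prime to $3$.
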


\begin{proof}
Note that if we take the intersection of two normal subgroups, of any given group, which contain the derived subgroup and a $p$-Sylow subgroup ($p$ prime), then their intersection is again normal and also contains the derived subgroup and all $p$-Sylow subgroups.  
In particular, there is a minimal such subgroup, which thus must also be characteristic.

Let $\Psi$ be the minimal normal subgroup of $D^2(\Gamma)$ containing its $3$-Sylow subgroups and $D^3(\Gamma)$.
Then by the above, $\Psi$ is characteristic in $D^2(\Gamma)$, and thus normal in $\Gamma$.
The quotient $D^2(\Gamma)/\Psi$ is abelian of order not divisible by $3$.
We may thus apply \cite[Lemma 3.1]{Schoof_cyclotomic}, from which we deduce that if $D^2(\Gamma)/\Psi$ is furthermore cyclic, then $\Psi = D^2(\Gamma)$.
Checking through the groups of order at most 11, we deduce  $D^2(\Gamma)$ is either a non-cyclic $2$-group, or a (possibly trivial) 3-group.

If $D^2(\Gamma)$ is a $3$-group, then so is $D(\Gamma)$.
By Lemma \ref{lemma:G_p-group_G/D(G)_cyclic_gives_G_cyclic} we find that $D(\Gamma)$ is cyclic and thus $D(\Gamma) \cong C_3$.
In particular, $D^2(\Gamma)$ is trivial.
We thus conclude $D^2(\Gamma)$ is a 2-group.
\end{proof}

\begin{proposition}
\label{prop:19_geom_points}
Any simple group scheme in $\mathcal{C}_{19}$ has its points defined over $\QQ(\sqrt{-19}, \alpha)$, where $\alpha$ satisfies $\alpha^3 -2\alpha -2=0$.
\end{proposition}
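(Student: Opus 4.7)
The plan is to bound the compositum $L$ of $\QQ(H)$ over all simple $H \in \mathcal{C}_{19}$ and then identify $L$ with $\QQ(\sqrt{-19},\alpha)$ via class field theory applied along the derived series of $\Gamma \coloneqq \Gal(L/\QQ)$.

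By iterated application of Lemma~\ref{lem:stability_properties_field_extension}(1), $L/\QQ$ is Galois and satisfies conditions (a)--(c) of that lemma. Corollary~\ref{cor:bound_root_discriminant} then gives $\delta_{L/\QQ} \leq 4\sqrt{19} < 17.44$, and since this bound applies to every finite Galois subextension, the Odlyzko--Poitou--Diaz y Diaz tables (as invoked in Section~\ref{sec:strategy_generic_fibre}) force $[L : \QQ] \leq 137$. By Lemma~\ref{lem:stability_properties_field_extension}(4), $L \cap \QQ^{\mathrm{ab}} \subseteq \QQ(i,\sqrt{19})$, so $\Gamma^{\mathrm{ab}}$ is a quotient of $C_2 \times C_2$. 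Using the Oort--Tate classification of simple prime-order group schemes together with the inertia condition at $19$, I would rule out $\QQ(i)$ and $\QQ(\sqrt{19})$ as subfields of $L$ (each would correspond to a simple order-$2$ object of $\mathcal{C}_{19}$ whose required prolongation to $\ZZ[\tfrac{1}{19}]$ fails the inertia/conductor condition), leaving $\Gamma^{\mathrm{ab}} \cong C_2$ with fixed field $F \coloneqq \QQ(\sqrt{-19})$.

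To handle $D(\Gamma)/D^2(\Gamma)$ I would pass to class field theory over $F$. The field $F$ has class number $1$, the prime $2$ is inert in $F$ (since $-19 \equiv 5 \pmod 8$) with residue field $\mathbb{F}_4$, and $19$ is totally ramified in $F/\QQ$. Pulling back conditions (a)--(c) to the abelian sub-extensions of $L/F$ bounds the admissible conductor at the primes above $2$ and $19$, and an explicit ray class group computation for this modulus yields a cyclic group of order $3$, whose associated cubic extension of $F$ is identified (via a discriminant comparison) with $F(\alpha)/F$ where $\alpha^3 - 2\alpha - 2 = 0$. Consequently $D(\Gamma)/D^2(\Gamma)$ is trivial or $C_3$.

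Finally, to apply Lemma~\ref{lemma:gp_thry} I need $|D^2(\Gamma)| \leq 11$. The crude bound $|D^2(\Gamma)| \leq 137/6 < 23$ must be refined; for this I would apply Corollary~\ref{cor:bound_root_discriminant} directly to the subfield $L^{D^2(\Gamma)}$, which contains $F(\alpha)$ and hence has degree at least $6$, and re-consult the tables to sharpen the bound. Lemma~\ref{lemma:gp_thry} then forces $D^2(\Gamma)$ to be a $2$-group, after which a further Hilbert class field argument via Lemma~\ref{lem:stability_properties_field_extension}(2), combined with a class number computation for $F(\alpha)$ (which I expect to be odd) and an analysis of the allowed ramified $2$-extensions of $F(\alpha)$ using (c), kills $D^2(\Gamma)$ entirely. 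The main obstacle is the chain of explicit numerical computations -- the ray class group of $F$ with the correct modulus, the identification of the resulting cubic extension with $x^3 - 2x - 2$, and the class number of $F(\alpha)$ -- which together pin down $L$ but require careful bookkeeping of conductors and discriminants.
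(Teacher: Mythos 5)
Your overall architecture matches the paper's (root-discriminant bound, the Odlyzko--Poitou--Diaz y Diaz tables, climbing the derived series via class field theory, then Lemma \ref{lemma:gp_thry}), but there is a genuine gap centred on your decision to exclude $\QQ(i)$ and $\QQ(\sqrt{19})$ from $L$. The exclusion is not justified: a quadratic subfield of your $L$ (the compositum of the $\QQ(H)$ over simple $H$) need not be the field of points of a simple order-$2$ object --- it could sit inside $\QQ(H)$ for a simple $H$ of larger order, so Oort--Tate says nothing about it. Worse, the exclusion destroys the numerics you need later. The paper takes $L$ to be the compositum of \emph{all} extensions satisfying (a)--(c), so $L^{D(\Gamma)}=\QQ(i,\sqrt{-19})$ has degree $4$, the ray class field of that quartic field of modulus $2$ is the degree-$12$ field $\QQ(i,\sqrt{-19},\alpha)$, a conductor--discriminant estimate identifies this with $L^{D^2(\Gamma)}$, and then $[L:L^{D^2(\Gamma)}]\le \lfloor 137/12\rfloor = 11$ is exactly the hypothesis of Lemma \ref{lemma:gp_thry}. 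Your version gives only $[L^{D^2(\Gamma)}:\QQ]\ge 6$, hence $|D^2(\Gamma)|\le 22$, and your proposed repair --- reapplying Corollary \ref{cor:bound_root_discriminant} to $L^{D^2(\Gamma)}$ and re-consulting the tables --- cannot close this: the tables bound degree in terms of root discriminant, and the bound $\delta_{L/\QQ}\le 4\sqrt{19}$ is unchanged by knowing a subfield, so $[L:\QQ]\le 137$ does not improve. Relatedly, your ray class computation over $\QQ(\sqrt{-19})$ is off: with the modulus permitted by (b)--(c) (conductor dividing $(4)$ at the inert prime $2$, unramified above $19$) the ray class group has order $6$, not $3$; the quadratic part cuts out $\QQ(i,\sqrt{-19})$, which genuinely satisfies (a)--(c), confirming that $\QQ(i)$ cannot be discarded at this stage.

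Your final step also aims at the wrong target. The paper never shows $D^2(\Gamma)$ is trivial; it shows only that $D^2(\Gamma)$ is a normal $2$-group and then invokes the fact that normal $p$-subgroups act trivially on simple modules in characteristic $p$. That single observation both makes the Hilbert-class-field analysis of $L/\QQ(i,\sqrt{-19},\alpha)$ unnecessary and is precisely what legitimately removes the $\QQ(i)$-direction at the end, descending the action from $\Gal(\QQ(i,\sqrt{-19},\alpha)/\QQ)$ to $\Gal(\QQ(\sqrt{-19},\alpha)/\QQ)$. Without it, your plan must prove $L=\QQ(\sqrt{-19},\alpha)$ on the nose, which requires eliminating all residual $2$-extensions allowed by (a)--(c); this is substantially harder than what the proposition needs and is nowhere reduced to a finite computation in your sketch.
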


\begin{proof}
By Corollary \ref{Cor:properties_of_annihilated_by_ell_with_Fontaine}, the field of points of any simple group scheme in $\mathcal{C}_{19}$ satisfies the assumptions of Lemma \ref{lem:stability_properties_field_extension}.
Let $L \subseteq \overline{\mathbb{Q}}$ be the smallest field containing all the finite extensions of $\mathbb{Q}$ satisfying properties $(a)$--$(c)$ from Lemma \ref{lem:stability_properties_field_extension} with $p=19$ and $\ell=2$.
Let $\Gamma \coloneqq \Gal(L/\QQ)$.
By Lemma \ref{lem:explicit_field_extensions_satisfying_the_bounds}, we have $L^{D(\Gamma)} =\QQ(i, \sqrt{-19})$.

The degree three extension of $\QQ(\sqrt{-19})$ given by adjoining a root $\alpha$ of $x^3 -2x -2=0$ is ramified only at $(2)$ and is Galois over $\QQ$.
As the only ramification in the extension $\QQ( \sqrt{-19}, \alpha)/\QQ(\sqrt{-19})$ occurs at the prime $(2)$ and is tame, we deduce $F \coloneqq \QQ(i, \sqrt{-19},\alpha)$ is contained in the ray class field of modulus $(2)$ of $\QQ(\sqrt{-19})$ and thus in $L$ by Lemma \ref{lem:explicit_field_extensions_satisfying_the_bounds}.

Since $F/L^{D(\Gamma)}$ has degree 3, and is only ramified at $(1+i)$, it is contained in the ray class field of modulus $(1+i)$.
We shall now use class field theory to show $F$ is the ray class field of $L^{D(\Gamma)}$ of modulus $(2)$.
In particular, $F$ has conductor $(1+i)$ and any other non-trivial abelian extension of $L^{D(\Gamma)}$ ramified only at $(1+i)$ has conductor at least $(1+i)^3$.

Let $\OO$ be the ring of integers of $L^{D(\Gamma)}$.
Recall that $\mathrm{Cl}_{(2)}(L^{D(\Gamma)})$, the ray class group of $L^{D(\Gamma)}$ of modulus $(2)$, fits in the following exact sequence \cite[Prop. 3.2.3]{Cohen_advanced_computational_ANT}
\[\OO^* \rightarrow \left(\OO/2\OO\right)^* \rightarrow \mathrm{Cl}_{(2)}(L^{D(\Gamma)}) \rightarrow \mathrm{Cl}(L^{D(\Gamma)}) \rightarrow 0\]
where $\mathrm{Cl}(L^{D(\Gamma)})$ denotes the class group of $L^{D(\Gamma)}$.
As $-19 \equiv 5 \mod{8}$, we see $2\OO = (i+1)^2$ and $(i+1)$ has residual degree 2.
Thus, as abelian groups $(\OO/2\OO)^* \cong C_2^2 \times C_3$.
The fundamental unit $\varepsilon$ of $L^{D(\Gamma)}$ has minimal polynomial $x^4 + 26x^3 + 338x^2 - 26x + 1$.
As the absolute norm $N(i-\varepsilon)=340$ is not divisible by $N(2)=16$, we see $i \not \equiv \varepsilon \mod{2}$.
Likewise, as $N(i-1)= 4$ and $N(\varepsilon-1)=340$, both $i$ and $\varepsilon$ do not equal $1$ modulo $2$.
In particular, the image of $\OO^*$ in $\left(\OO/2\OO\right)^*$ is a group of order 4.
The class number of $L^{D(\Gamma)}$ is trivial.
Thus it follows that $\mathrm{Cl}_2(L^{D(\Gamma)}) \cong C_3$, whence $F$ is  the ray class field extension of $L^{D(\Gamma)}$ of modulus $(2)$.

We shall now show $L^{D^2(\Gamma)}= F$.
First note that $L^{D^2(\Gamma)}/L^{D(\Gamma)}$ may only be ramified at $(1+i)$ since $L^{D(\Gamma)}$ is ramified at 19 and the ramification degree of $L$ at 19 is (at most) 2.
Suppose $K/L^{D(\Gamma)}$ is an abelian extension of $L^{D(\Gamma)}$ ramified only at $(1+i)$ containing $F$ as a proper subfield.
We shall show $K$ cannot be contained in $L$.

Since $\Gal(K/L^{D(\Gamma)})$ is of order $3[K\colon F]$, it has $3([K\colon F]-1)$ many characters which do not factor through $\Gal(F/L^{D(\Gamma)})$.
The field cut out by such a character has conductor at least $(1+i)^3$, since it is an abelian extension of $L^{D(G)}$ not contained in $F$.
Hence all such characters have Artin conductor divisible by  $(1+i)^3$ \cite[Thm. VII.11.10]{Neukirch_ANT_book}.

The Artin conductors of the non-trivial characters of $\Gal(K/L^{D(\Gamma)})$ which factor through $\Gal(F/L^{D(\Gamma)})$ are determined by their Artin conductors viewed as characters of $\Gal(F/L^{D(\Gamma)})$ \cite[Prop. VII.11.7 (ii)]{Neukirch_ANT_book}.
As $F/L^{D(\Gamma)}$ is unramified outside of $(1+i)$ and tamely ramified at $(1+i)$, the Artin conductors \cite[pg. 527]{Neukirch_ANT_book} of the non-trivial characters of $\Gal(F/L^{D(\Gamma)})$ equal $(1+i)$.

Thus by the conductor-discriminant formula \cite[pg. 534, VII.11.9]{Neukirch_ANT_book} and \cite[Prop. III.8]{Serre_book_1979}, the absolute value of the discriminant of $K$ is at least \[(2^4 \cdot 19^2)^{3[K:F]} \cdot 2^4 \cdot (4^3)^{3([K:F]-1)} = 2^{\frac{5}{2}[K:\QQ]-2} \cdot 19^{\frac{1}{2} [K:\QQ]} > 2^{2[K:\QQ]} \cdot 19^{\frac{1}{2} [K:\QQ]}\]
where the final inequality comes from the fact that $[K \colon  \QQ] \geq 24$ since $K$ properly contains $F$.
The discriminant of $K$ is thus larger than that allowed by Corollary \ref{cor:bound_root_discriminant}.
Thus $L^{D^2(\Gamma)}= F$.

By Corollary \ref{cor:bound_root_discriminant}, the root discriminant of $L$ is less than $4\sqrt{19} < 17.44$.
Diaz y Diaz's tables \cite{DiazyDiaz_minorations} in turn imply that $[L\colon\QQ]\leq 137$.
Since $[ L^{D^2(\Gamma)}\colon \QQ] =12 $, we deduce the extension $L/L^{D^2(\Gamma)}$ has degree at most 11.
So we may apply Lemma \ref{lemma:gp_thry} to deduce that $D^2(\Gamma)$ is a 2-group.
Thus by Lemma \ref{lemma:running_examples_points_generate_ell_group}, any simple object of $\mathcal{C}_{19}$ has its points defined over $L^{D^2(\Gamma)}=F=\QQ(\sqrt{-19}, \alpha)$.
\end{proof}

\subsection{Step 2} \label{subsec:simple_19}
The elliptic curve $J_0(19)$ has minimal Weierstrass equation \[y^2+y=x^3+x^2-9x-15.\]
By Proposition \ref{prop:torsion_is_finite_flat}, \cite[Thm. VI.6.9]{Deligne_Rapoport} and \cite[Cor. 9.7.2]{BLR_book_1990}, the $2$-torsion of $J_0(19)$ gives rise to an object $E \coloneqq J_0(19)[2]$ in $\mathcal{C}_{19}$.

\begin{lemma}
\label{lem:agree_over_Z2}
The finite flat group scheme $E/\ZZ[\frac{1}{19}]$ is simple and $\QQ(E) = \QQ(\sqrt{-19}, \alpha)$.
Its base change $E_{\ZZ_{2}}$ is connected, with connected dual.
In particular, any object $M$ in $\mathcal{C}_{19}$ which satisfies $M_{\QQ_2} \cong E_{\QQ_2}$ also satisfies $M_{\ZZ_{2}} \cong E_{\ZZ_{2}}$.
\end{lemma}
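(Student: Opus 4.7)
The plan is to address the three assertions in sequence: first identifying the field $\QQ(E)$ together with the simplicity of $E$, then establishing connectedness of $E_{\ZZ_2}$ and its Cartier dual, and finally deducing the uniqueness statement from the earlier propositions.

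For the first assertion, I would compute the $2$-division cubic of $X_0(19)$ explicitly. Setting $2y+1=0$ in the Weierstrass equation $y^2+y = x^3+x^2-9x-15$ yields
\[
4x^3+4x^2-36x-59 = 0,
\]
whose roots are the $x$-coordinates of the non-trivial $\overline{\QQ}$-points of $E$. A direct computation shows this cubic has discriminant $-2^4\cdot 19^3$, which is $-19$ modulo squares, and the rational root theorem (together with checks on the handful of candidates $\pm 1, \pm \tfrac{1}{2}, \pm \tfrac{1}{4}, \pm 59, \ldots$) shows it is irreducible over $\QQ$. Hence $\Gamma_\QQ$ acts transitively on the three non-zero points of $E(\overline{\QQ})$, so there is no proper non-trivial $\Gamma_\QQ$-stable subgroup, and $E$ is simple by Lemma \ref{lem:correspondence_submodules_subschmes}. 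The combination of irreducibility and non-square discriminant forces $\Gal(\QQ(E)/\QQ) \cong S_3$, so $[\QQ(E):\QQ] = 6$. By Proposition \ref{prop:19_geom_points} applied to the simple object $E$, we have $\QQ(E) \subseteq \QQ(\sqrt{-19},\alpha)$. Since $x^3 - 2x - 2$ is Eisenstein at $2$ with discriminant $-76$, the field $\QQ(\sqrt{-19},\alpha)$ is itself a degree-$6$ extension of $\QQ$, and the two fields must coincide.

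For the connectedness assertions, I would use that $X_0(19)$ has good reduction at $2$ (being of conductor $19$) and show it is supersingular there. A direct point count on the reduction $y^2+y = x^3+x^2+x+1$ over $\FF_2$ yields $|X_0(19)(\FF_2)|=3$, so $a_2 = 0$ and $X_0(19)/\FF_2$ is supersingular. In particular $E(\overline{\FF_2}) = 0$, so the étale quotient in the connected-étale sequence of $E_{\FF_2}$ vanishes, whence $E_{\FF_2}$ is connected. As the étale quotient of $E_{\ZZ_2}$ is étale over $\ZZ_2$ with trivial special fibre, it too is trivial, and so $E_{\ZZ_2}$ is connected. The Weil pairing furnishes a canonical Cartier self-duality of the $2$-torsion of an elliptic curve as a group scheme over any base; hence the Cartier dual of $E_{\ZZ_2}$ is isomorphic to $E_{\ZZ_2}$ and is therefore also connected.

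For the final statement, since $E_{\QQ_2}$ is killed by $2$ and $M_{\QQ_2} \cong E_{\QQ_2}$, Corollary \ref{Cor:if_n_kills_the_base_change_then_it_kills_you_too} ensures that $M$ is annihilated by $2$. Both $M_{\ZZ_2}$ and $E_{\ZZ_2}$ are then finite flat prolongations of $E_{\QQ_2}$ to $\ZZ_2$, and since $E_{\ZZ_2}$ is connected with connected dual, Proposition \ref{prop:unique_biconnected} yields $M_{\ZZ_2} \cong E_{\ZZ_2}$. The most delicate step is the first assertion, where one must correctly compute and analyse the $2$-division polynomial and then combine it with the abstract degree bound from Proposition \ref{prop:19_geom_points}; the supersingularity check and the uniqueness argument are direct consequences of the cited results.
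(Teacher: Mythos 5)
Your proof is correct, and for two of the three assertions it follows the paper's route (simplicity via the unique irreducible $2$-dimensional $\mathbb{F}_2$-representation of $S_3$, self-duality of $E_{\mathbb{Z}_2}$ via the Weil pairing/principal polarisation, and uniqueness of the prolongation via Proposition \ref{prop:unique_biconnected}). Where you genuinely diverge is the connectedness of $E_{\mathbb{Z}_2}$: you count points on the reduction $y^2+y=x^3+x^2+x+1$ over $\mathbb{F}_2$ to get $a_2=0$, conclude that $X_0(19)$ is supersingular at $2$, so that $E(\overline{\mathbb{F}}_2)=0$ and the \'etale quotient vanishes. The paper instead argues on the generic fibre: $\mathbb{Q}_2(E)=\mathbb{Q}_2(\sqrt{-19},\alpha)$ is a degree-$6$ extension (unramified quadratic times the totally ramified cubic cut out by the Eisenstein polynomial $x^3-2x-2$), so $\Gamma_{\mathbb{Q}_2}$ still acts through the full $S_3$ and $E^0(\overline{\mathbb{Q}}_2)$ must be $0$ or everything; \'etaleness is then excluded because $2$ ramifies in $\mathbb{Q}_2(E)$. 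Your argument is more computational and yields the stronger fact of supersingularity; the paper's is purely Galois-theoretic and reuses the local field computation that it needs anyway in Proposition \ref{prop:19_extensions_with_E_and_order_2_stuff}. Your identification of $\mathbb{Q}(E)$ is also organised slightly differently — you combine the explicit division cubic $4x^3+4x^2-36x-59$ (irreducible, discriminant $-2^4\cdot 19^3\equiv -19$ mod squares, hence Galois group $S_3$ and degree $6$) with the containment $\mathbb{Q}(E)\subseteq\mathbb{Q}(\sqrt{-19},\alpha)$ from Proposition \ref{prop:19_geom_points} and a degree count, whereas the paper simply asserts the direct computation; your detour is legitimate (no circularity, as Proposition \ref{prop:19_geom_points} does not depend on this lemma) but the direct verification that the splitting field of the division cubic equals $\mathbb{Q}(\sqrt{-19},\alpha)$ would be self-contained.
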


\begin{proof}
An easy computation shows $\QQ(E) = \QQ(\sqrt{-19}, \alpha)$.
The Galois group of the extension $\QQ(E)/\QQ$ is isomorphic to $S_3$.
Hence $E(\overline{\QQ})$ is a faithful 2-dimensional representation of $S_3$ over $\FF_2$.
There is only one such representation, which is irreducible, so by Lemma \ref{lem:correspondence_submodules_subschmes}, $E$ is simple.

Let $E^0$ be the connected component of $E _{ \ZZ_{2}}$.
Its geometric points $E^0(\overline{\QQ}_2)$ give an $\FF_2[\Gamma_{\QQ_2}]$-submodule of $E(\overline{\QQ}_2)$.
As $\Gamma_{\QQ_2}$ acts on $E(\overline{\QQ}_2)$ via a quotient isomorphic to $S_3$,   we find $E^0(\overline{\QQ}_2)$ is either equal to $E(\overline{\QQ}_2)$ or trivial.
In the latter case, the connected-étale sequence \cite[pg. 138 (I)]{Tate_FLT} implies that $E _{ \ZZ_{2}}$ is étale.
However, $2$ ramifies in  $\QQ_2(\sqrt{-19}, \alpha) = \QQ_2(E)$, so $E _{ \ZZ_{2}}$ cannot be étale.
Hence $E _{ \ZZ_{2}}$ is connected.

Furthermore as $J_0(19)$ is principally polarised, $E$ is self-dual \cite[Cor. 27.214]{Goertz_Wedhorn_II}, so its dual is also connected.
That is, $E _{ \ZZ_{2}}$ is biconnected.
By Proposition \ref{prop:unique_biconnected}, all prolongations of $E _{ \QQ_2}$ to $\ZZ_2$ are isomorphic.
Hence the result follows.
\end{proof}

\begin{lemma}
\label{lem:agree_over_38}
Suppose $M$ in $\mathcal{C}_{19}$ satisfies $M(\overline{\QQ}) \cong E(\overline{ \QQ})$ as Galois modules.
Then we also have $M _{ \ZZ[\frac{1}{2p}] }\cong E _{ \ZZ[\frac{1}{2p}]}$.
\end{lemma}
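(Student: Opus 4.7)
The plan is to reduce the claim to the uniqueness of étale prolongations that was recorded in Section~\ref{sec:strategy_prolongations}. The starting observation is that both $M$ and $E$ are finite flat group schemes of $2$-power order. When we base change to $\ZZ[\frac{1}{2p}] = \ZZ[\frac{1}{38}]$, their orders become units in the base ring, so $M_{\ZZ[\frac{1}{2p}]}$ and $E_{\ZZ[\frac{1}{2p}]}$ are both étale (by the criterion recalled in \cite[\S 3.7 (II)]{Tate_FLT}).

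Next, I would translate the Galois module hypothesis into an isomorphism of generic fibres. By Tate's equivalence of categories between finite étale group schemes over $\QQ$ and finite $\Gamma_{\QQ}$-modules \cite[pg. 137]{Tate_FLT}, the assumed isomorphism $M(\overline{\QQ}) \cong E(\overline{\QQ})$ of $\Gamma_\QQ$-modules produces an isomorphism $M_\QQ \cong E_\QQ$ of $\QQ$-group schemes. The underlying Galois module is unramified outside $\{2,p\}$ (indeed, both $M$ and $E$ are étale over $\ZZ[\frac{1}{2p}]$), so the action of $\Gamma_\QQ$ factors through the étale fundamental group of $\Spec \ZZ[\frac{1}{2p}]$.

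Finally, I would invoke the uniqueness of étale prolongations discussed in Section~\ref{sec:strategy_prolongations}: a group scheme over $\QQ$ whose Galois module is unramified outside $\{2,p\}$ admits a unique prolongation to an étale group scheme over $\ZZ[\frac{1}{2p}]$. Since $M_{\ZZ[\frac{1}{2p}]}$ and $E_{\ZZ[\frac{1}{2p}]}$ are both étale prolongations of this common generic fibre, the uniqueness forces $M_{\ZZ[\frac{1}{2p}]} \cong E_{\ZZ[\frac{1}{2p}]}$.

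I do not anticipate any substantive obstacle in executing this plan; the lemma is essentially a bookkeeping statement that converts the hypothesis on Galois modules into an isomorphism of group schemes, using only the equivalences of categories already invoked earlier in the paper. The real work in the surrounding argument lies in the companion statement over $\ZZ_2$ (Lemma~\ref{lem:agree_over_Z2}), where connectedness of both $E$ and its dual is needed to apply Proposition~\ref{prop:unique_biconnected}; here, by contrast, inverting $2$ trivialises the prolongation problem.
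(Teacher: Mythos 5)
Your proposal is correct and follows essentially the same route as the paper: both arguments reduce to the fact that $M$ and $E$ are étale over $\ZZ[\frac{1}{2p}]$, hence determined (via Tate's equivalence of categories) by the action of the étale fundamental group of $\Spec \ZZ[\frac{1}{2p}]$, which is a quotient of $\Gamma_\QQ$, so the assumed Galois module isomorphism forces the group schemes to agree. The paper states this more tersely without explicitly separating out the generic-fibre step, but the content is identical.
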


\begin{proof}
As $M _{ \ZZ[\frac{1}{2p}]}$ is étale for any $M$ in $\mathcal{C}_{19}$, it is determined by  the action of the étale fundamental group of $\ZZ[\frac{1}{2p}]$ on its geometric points \cite[pg. 137]{Tate_FLT}.
The étale fundamental group of $\ZZ[\frac{1}{2p}]$ is a quotient of $\Gamma_\QQ$, so the result follows from the isomorphism  $M(\overline{\QQ}) \cong E(\overline{ \QQ})$ of $\Gamma_\QQ$-modules.
\end{proof}

\begin{theorem}
\label{thm:simple_group_schemes_in_C_19}
The simple group schemes in $\mathcal{C}_{19}$ are $\underline{\ZZ/2\ZZ}$, $\mu_2$ and $E$.
\end{theorem}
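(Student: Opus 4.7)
The plan is to follow the two-step strategy of Sections~\ref{sec:strategy_generic_fibre}--\ref{sec:strategy_prolongations}: first classify the generic fibres of simple objects, then the prolongations to $\ZZ[\tfrac{1}{19}]$. By Proposition~\ref{prop:19_geom_points}, the field of points of any simple $H \in \mathcal{C}_{19}$ lies in $F \coloneqq \QQ(\sqrt{-19}, \alpha)$ with $\mathrm{Gal}(F/\QQ) \cong S_3 \cong \mathrm{GL}_2(\FF_2)$. By Lemma~\ref{lem:correspondence_submodules_subschmes}, the generic fibre $H_\QQ$ is determined by the simple $\FF_2[S_3]$-module $H(\overline{\QQ})$. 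Modular representation theory (or direct counting using the two $2$-regular conjugacy classes of $S_3$) yields exactly two such modules up to isomorphism: the trivial $1$-dimensional module, with corresponding generic fibre the constant group scheme of order $2$, and the $2$-dimensional faithful module, identified via Lemma~\ref{lem:agree_over_Z2} with $E(\overline{\QQ})$. The theorem then reduces to classifying prolongations of each generic fibre to $\ZZ[\tfrac{1}{19}]$.

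For the trivial module, I would glue a prolongation over $\ZZ_2$ to one over $\ZZ[\tfrac{1}{38}]$ via Proposition~\ref{prop:equivalence_categories_prolongations}. The prolongation to $\ZZ[\tfrac{1}{38}]$ is forced to be étale and hence is uniquely $\underline{\ZZ/2\ZZ}$. The prolongations to $\ZZ_2$ are classified by Oort--Tate as the pairs $(a,b) \in \ZZ_2^2$ with $ab = 2$, modulo the equivalence $(a,b) \sim (\lambda a, \lambda^{-1} b)$; this gives the two classes $\underline{\ZZ/2\ZZ}_{\ZZ_2}$ and $(\mu_2)_{\ZZ_2}$, distinguished by whether $v_2(a) = 0$ or $1$. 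Finally, the gluing datum over $\QQ_2$ lies in the automorphism group of the constant group scheme of order $2$, which is trivial, so each local combination descends to a single global prolongation. The outcome is exactly $\underline{\ZZ/2\ZZ}$ and $\mu_2$, both of which visibly belong to $\mathcal{C}_{19}$ and are simple.

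For the $2$-dimensional module, $E$ is one prolongation. For uniqueness, given $M \in \mathcal{C}_{19}$ with $M_\QQ \cong E_\QQ$, Lemma~\ref{lem:agree_over_38} yields $M_{\ZZ[\frac{1}{38}]} \cong E_{\ZZ[\frac{1}{38}]}$ and Lemma~\ref{lem:agree_over_Z2} yields $M_{\ZZ_2} \cong E_{\ZZ_2}$, so by Proposition~\ref{prop:equivalence_categories_prolongations} the object $M$ is determined by a gluing datum in $\mathrm{Aut}(E_{\QQ_2})$ modulo the images of $\mathrm{Aut}(E_{\ZZ_2})$ and $\mathrm{Aut}(E_{\ZZ[\frac{1}{38}]})$. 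Lemma~\ref{lem:agree_over_Z2} shows $E_{\ZZ_2}$ is the unique prolongation of $E_{\QQ_2}$, so it is simultaneously the maximal and minimal prolongation; Proposition~\ref{prop:automorphisms_maximal_prolongation} then lifts every element of $\mathrm{Aut}(E_{\QQ_2})$ to $E_{\ZZ_2}$. The étale factor is handled directly by Proposition~\ref{prop:unique_extension_of_homs}. Hence the action on gluings is transitive and $M \cong E$.

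The main obstacle is this uniqueness step in Case~$2$: one needs every generic automorphism of $E_{\QQ_2}$ to lift on both sides of the gluing. Lemmas~\ref{lem:agree_over_Z2}--\ref{lem:agree_over_38} are tailored to make this reduce to a single application of Proposition~\ref{prop:automorphisms_maximal_prolongation}; the analogous step in Case~$1$ is a finite Oort--Tate computation at the prime $2$.
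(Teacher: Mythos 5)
Your proposal is correct and follows the same two-step skeleton as the paper: Proposition~\ref{prop:19_geom_points} pins the generic fibre down to one of the two simple $\FF_2[S_3]$-modules, the order-two case is settled by Oort--Tate, and the order-four case is settled by gluing the local identifications of Lemmas~\ref{lem:agree_over_Z2} and~\ref{lem:agree_over_38} through Proposition~\ref{prop:equivalence_categories_prolongations}. The one substantive divergence is in how you close the gluing argument for $E$: the paper observes that $E(\overline{\QQ}_2)$ is an absolutely irreducible $\FF_2[\Gamma_{\QQ_2}]$-module, so $\mathrm{Aut}(E_{\QQ_2})$ is trivial and the gluing square commutes for free, whereas you keep $\mathrm{Aut}(E_{\QQ_2})$ possibly nontrivial and instead lift every generic automorphism to $E_{\ZZ_2}$ via Proposition~\ref{prop:automorphisms_maximal_prolongation} (using that the unique prolongation is simultaneously maximal and minimal) to get transitivity on gluing data. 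Your route is marginally more robust, as it would survive in situations where the residual representation fails to be absolutely irreducible locally at $2$; the paper's is shorter because Schur's lemma makes the double coset a singleton outright. (Your citation of Proposition~\ref{prop:unique_extension_of_homs} for the ``étale factor'' is not actually needed once the $\ZZ_2$-side already acts transitively, and that proposition gives injectivity rather than lifting, but this does not affect the argument.)
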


\begin{proof}
Let $M$ be a simple group scheme in $\mathcal{C}_{19}$.
Any group scheme over $\QQ$ is étale \cite[pg. 138, (II)]{Tate_FLT} and thus determined by its associated Galois module \cite[Pg. 137]{Tate_FLT}.
We thus first look to determine $M _{ \QQ}$.
By Proposition \ref{prop:19_geom_points}, the geometric points of $M$ are defined over $\QQ(\sqrt{-19}, \alpha)$.
The Galois group  of the extension $\QQ(\sqrt{-19}, \alpha)/\QQ$ is isomorphic to $S_3$.
There are only two irreducible modules for $S_3$ in characteristic two, the trivial one, and the standard representation of dimension two.
Hence either $M$ has order two or four, and moreover either $M _{ \QQ }\cong (\underline{\ZZ/2\ZZ}) _{ \QQ}$ or $E _{ \QQ}$.

In the case $M$ has order two, Proposition \ref{prop:grp_schms_order_2} shows $M$ is isomorphic to either $\underline{\ZZ/2\ZZ}$ or $\mu_2$.

Suppose  $M _{ \QQ} \cong E _{ \QQ}$.
We will use the equivalence of categories in Proposition \ref{prop:equivalence_categories_prolongations} to show $M \cong E$.
Under this equivalence $M$ corresponds to $(M _{ \ZZ_2},  M _{ \ZZ[\frac{1}{38}]}, \mathrm{id}_{M _{ \QQ_2}})$ and $E$ to $(E _{ \ZZ_2},  E _{ \ZZ[\frac{1}{38}]}, \mathrm{id}_{E _{ \QQ_2}})$.
Lemma \ref{lem:agree_over_Z2} and \ref{lem:agree_over_38} show that there exist isomorphisms $\varphi \colon M _{ \ZZ_2 }\rightarrow E _{ \ZZ_2 }$ and $\varphi' \colon M _{ \ZZ[\frac{1}{38}] }\rightarrow E _{ \ZZ[\frac{1}{38}]}$.

Furthermore, as $M(\overline{\QQ}_2) \cong E(\overline{\QQ}_2)$ are absolutely irreducible $\FF_2[\Gamma_{\QQ_2}]$-modules, we have $\Hom_{\FF_2[\Gamma_{\QQ_2}]}(M(\overline{\QQ}_2), E(\overline{\QQ}_2)) \cong \FF_2$.
Using that $M$ and $E$ are étale over $\QQ_2$, we thus find there is a unique isomorphism $M _{ \QQ_2} \rightarrow E _{ \QQ_2}$ of $\QQ_2$-group schemes.
As base change sends isomorphisms to isomorphisms, the base changes of $\varphi$ and $\varphi'$ to $\QQ_2$ are equal.
It follows that the diagram below
\begin{center}
    \begin{tikzcd}
E _{ \QQ_2 }\arrow[r, "\mathrm{id}_{E _{ \QQ_2}}"] \arrow[d, "\varphi _{ \QQ_2}"]
&  E _{ \QQ_2 }\arrow[d, "\varphi' _{ \QQ_2}"] \\
M _{ \QQ_2 }\arrow[r, "\mathrm{id}_{M _{ \QQ_2}}"]
& M _{ \QQ_2}
\end{tikzcd}
\end{center}
is commutative.
Whence $M \cong E$.
\end{proof}

\subsection{Step 3}
Recall that for finite flat group schemes $G,H$ over $\ZZ[\frac{1}{19}]$, we interpret $ \Ext^1_{\ZZ[\frac{1}{19}]}(G,H)$ as the set of extensions of $G$ by $H$ up to equivalence in the category of finite flat group schemes over $\ZZ[\frac{1}{19}]$ and this set is equipped with a natural abelian group law as explained in Section \ref{subsection:extensions}. 
Likewise, if $G $ and $H$ belong to $\mathcal{C}_{19}$, then we interpret $ \Ext^1_{\mathcal{C}_{19}}(G,H)$ as the subgroup of extensions of $G$ by $H$ in  $\mathcal{C}_{19}$.

\begin{proposition}
\label{prop:19_extensions_with_E_and_order_2_stuff}
We have
\[ \Ext^1_{\mathcal{C}_{19}}(E,\underline{\ZZ/2\ZZ}) =0.\]
\end{proposition}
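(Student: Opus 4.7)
The strategy is to reduce to classifying Galois-module extensions, show that any nontrivial candidate must correspond to an unramified $V_4$-extension of $K \coloneqq \QQ(E) = \QQ(\sqrt{-19}, \alpha)$, and rule this out via the class group of $K$. First I would establish that $E$ is the unique prolongation of $E_\QQ$ to $\ZZ[\frac{1}{19}]$: Proposition \ref{prop:unique_biconnected} and the biconnectedness of $E_{\ZZ_2}$ (Lemma \ref{lem:agree_over_Z2}) give unique prolongation over $\ZZ_2$, and combining this with the étale case over $\ZZ[\frac{1}{38}]$ (Lemma \ref{lem:agree_over_38}) via Proposition \ref{prop:equivalence_categories_prolongations} yields the claim over $\ZZ[\frac{1}{19}]$. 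Proposition \ref{prop:base_change_on_extension_groups} then produces an injection
\[
\Ext^1_{\ZZ[\frac{1}{19}]}(E, \ZnZconst{2}) \hookrightarrow \Ext^1_\QQ(E_\QQ, \ZnZconst{2}_\QQ),
\]
reducing the problem to showing that for every extension $0 \to \ZnZconst{2} \to G \to E \to 0$ in $\mathcal{C}_{19}$, the Galois module $M \coloneqq G(\overline{\QQ})$ is split.

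Next, given such a $G$, I would show $G_{\ZZ_2}$ is split. Since $\ZnZconst{2}$ is étale, the connected component $G^0_{\ZZ_2}$ meets it trivially, and so injects into $E_{\ZZ_2}$, which is connected (Lemma \ref{lem:agree_over_Z2}) and simple (using Lemma \ref{lem:correspondence_submodules_subschmes} and the $\Gamma_{\QQ_2}$-irreducibility of $E_{\QQ_2}$ coming from the computation of $\QQ_2(E)$ in Lemma \ref{lem:agree_over_Z2}). Hence $G^0_{\ZZ_2} = E_{\ZZ_2}$, and the henselian splitting of the connected--étale sequence gives $G_{\ZZ_2} \cong E_{\ZZ_2} \oplus \ZnZconst{2}$, whence $M|_{\Gamma_{\QQ_2}} \cong \FF_2 \oplus E$. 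Viewing $\mathrm{Gal}(K/\QQ)$ as $S_3$ and using that $G_K$ acts trivially on both $\FF_2$ and $E$, the normal subgroup $\mathrm{Gal}(\QQ(M)/K)$ sits inside $\Hom(E, \FF_2)$ as an $S_3$-stable subgroup. Under Weil self-duality $E \cong E^\vee$, the $S_3$-action on $\Hom(E, \FF_2)$ is the irreducible standard representation, so $\mathrm{Gal}(\QQ(M)/K)$ is either trivial (and the extension splits) or all of $\FF_2^2$.

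The remaining hypothetical $V_4$ case is ruled out as follows. The splitting over $\QQ_2$ forces $\QQ_2(M) = K_\mathfrak{p}$ (with $\mathfrak{p}$ the unique prime of $K$ above $2$), so $\mathfrak{p}$ is totally split in $\QQ(M)/K$. At primes above $19$, tame inertia is cyclic of order $2$ or $4$; an explicit matrix computation--writing the possible lifts of the $S_3$-transposition generating the inertia of $K/\QQ$ as $3\times 3$ upper-triangular matrices over $\FF_2$--will show the $\mathcal{C}_{19}$ condition $(\sigma-1)^2 = 0$ excludes the order-$4$ case, so inertia of $\QQ(M)/K$ above $19$ is trivial. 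Since $K$ is totally complex, $\QQ(M)/K$ is unramified everywhere, whence $\QQ(M)$ lies in the Hilbert class field of $K$. A direct class-group computation (via, e.g., Magma or PARI) showing that $\mathrm{Cl}(K)$ has odd order yields the required contradiction. The main obstacles are this explicit $19$-ramification matrix check and the class-group computation for the sextic field $K$.
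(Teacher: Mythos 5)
Your proposal is correct in substance, and its core — splitting over $\ZZ_2$ via the connected--étale sequence, trivial inertia above $19$ in $\QQ(M)/K$ for $K=\QQ(E)$, and the triviality of the class group of $K$ — coincides with the paper's proof. The genuine difference is the descent from the generic fibre back to $\ZZ[\frac{1}{19}]$: the paper never invokes uniqueness of the prolongation of $E_\QQ$, but instead shows the extension splits over $\ZZ_2$ and over $\ZZ[\frac{1}{38}]$ separately and concludes via the Mayer--Vietoris sequence (Theorem \ref{thm:Mayer_Vietoris}), using $\Hom_{\QQ_2}(E,\ZnZconst{2})=0$ to make the relevant base-change map on $\Ext^1$ injective. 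You instead first establish that $E$ is the unique prolongation of $E_\QQ$ to $\ZZ[\frac{1}{19}]$ (essentially the content of Theorem \ref{thm:simple_group_schemes_in_C_19}) and then apply Proposition \ref{prop:base_change_on_extension_groups}; this is a clean alternative that buys a purely Galois-theoretic target $\Ext^1_\QQ(E,\ZnZconst{2})$, at the cost of front-loading the prolongation analysis. Two smaller remarks. First, the worry about inertia of order $4$ above $19$ and the proposed matrix computation are unnecessary: once $G_{\ZZ_2}\cong E_{\ZZ_2}\times\ZnZconst{2}$, Corollary \ref{Cor:if_n_kills_the_base_change_then_it_kills_you_too} shows $G$ is annihilated by $2$, and Corollary \ref{Cor:properties_of_annihilated_by_ell_with_Fontaine}(b) then gives inertia of order dividing $2$ directly. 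Second, the claim that triviality of $\Gal(\QQ(M)/K)$ forces the Galois-module extension to split is not automatic: it requires $H^1(S_3,\Hom(E(\overline{\QQ}),\FF_2))=0$, which holds because restriction to a $2$-Sylow subgroup is injective on $H^1$ and the restriction of $\Hom(E(\overline{\QQ}),\FF_2)$ to $C_2$ is the regular representation, hence cohomologically trivial. The paper leaves the same point implicit, but you should record it, since this vanishing is exactly where the final step could fail for a different simple module.
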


\begin{proof}
Let $M$ be an extension of $E$ by $\underline{\ZZ/2\ZZ}$ in $\mathcal{C}_{19}$.
That is, we have an exact sequence
\[0 \rightarrow \underline{\ZZ/2\ZZ} \rightarrow M \rightarrow E \rightarrow 0.
\]
Let $L = \QQ(M)$ and $F = \QQ(E)$.
We shall show $L=F$ by proving that $L/F$ is unramified and abelian.
As $M$ is étale over $\ZZ[\frac{1}{38}]$, the extension $L/F$ is unramified outside of $38$.

By Lemma \ref{lem:agree_over_Z2}, $E_{\ZZ_2}$ is connected.
Hence the connected-étale sequence \cite[pg. 138 (I)]{Tate_FLT} furnishes a section for the above exact sequence base changed to $\ZZ_2$.
Thus $M_{ \ZZ_2} \cong (\underline{\ZZ/2\ZZ} \times E)_{\ZZ_2}$, in particular $\Ext^1_{\ZZ_2}(E,\underline{\ZZ/2\ZZ})=0$.
From which we deduce that $\Gal(\overline{\QQ}_2/\QQ_2(E))$ acts trivially on $M(\overline{\QQ}_2)$, and $M$ is annihilated by $2$, from applying Corollary \ref{Cor:if_n_kills_the_base_change_then_it_kills_you_too}.
Thus $L/F$ is unramified at primes above $2$ and the exponent of $\Gal(L/F)$ divides 2 by Lemma \ref{lemma:trivial_by_trivial_implies_order_divides_n}.
Hence $L/F$ is an abelian extension.

As $F$ has class number one, it remains to show $L/F$ is unramified at primes above 19.
Recall from Lemma \ref{lem:agree_over_Z2} that $\QQ_{19}(E) = \QQ_{19}(\sqrt{-19}, \alpha)= \QQ_{19}(\sqrt{-19})$.
So $\QQ_{19}(E)/ \QQ_{19}$ is a totally ramified extension of degree two.
By Corollary \ref{Cor:properties_of_annihilated_by_ell_with_Fontaine}, the inertia group of the extension $\QQ_{19}(M)/\QQ_{19}$ has order 2.
Thus $\QQ_{19}(M)/\QQ_{19}(E)$ is unramified.
Hence $L/F$ is everywhere unramified and so $L=F$.

Using that $L=F$, that $M$ is annihilated by 2 and the equivalence of categories for finite étale group schemes \cite[pg. 137]{Tate_FLT}, we see the image of $M$ in $\Ext^1_{\ZZ[\frac{1}{38}]}(E,\underline{\ZZ/2\ZZ})$ is zero.
As $E(\overline{\QQ}_2)$ is a simple 2-dimensional module over $\FF_2$ for $\Gal(\QQ_2(E)/\QQ_2) \cong S_3$, using étaleness we find $\Hom_{\QQ_2}(E,\underline{\ZZ/2\ZZ}) =0$.
It thus follows from the Mayer-Vietoris sequence in Theorem \ref{thm:Mayer_Vietoris} that $M$ is the trivial extension.
\end{proof}

\begin{lemma}
\label{lemma:endos_are_Z/2Z}
        Let $R$ be one of $\QQ,\QQ_2, \ZZ_2,\ZZ[\frac{1}{19}], \ZZ[\frac{1}{38}]$.
        Then $\End_R(E) \cong \FF_2$.
\end{lemma}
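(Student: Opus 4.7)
The plan is to reduce, via Proposition \ref{prop:unique_extension_of_homs}, to the two ``generic'' cases $R = \QQ$ and $R = \QQ_2$, and then in these cases to invoke Schur's lemma on the associated $\FF_2[S_3]$-module.

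For the generic cases, since $E$ is étale over any field of characteristic zero, the equivalence of categories \cite[pg. 137]{Tate_FLT} identifies $\End_\QQ(E)$ (resp. $\End_{\QQ_2}(E)$) with the ring of $\Gamma_\QQ$-equivariant (resp. $\Gamma_{\QQ_2}$-equivariant) endomorphisms of $E(\overline{\QQ})$. By Lemma \ref{lem:agree_over_Z2} the action of $\Gamma_\QQ$ on $E(\overline{\QQ}) \cong \FF_2^2$ factors through $\Gal(\QQ(\sqrt{-19}, \alpha)/\QQ) \cong S_3$ as the standard faithful 2-dimensional representation. For the local case I would note that $x^3-2x-2$ is Eisenstein over $\QQ_2$, hence defines a totally ramified cubic extension, and that $-19 \equiv 5 \pmod 8$ is a non-square in $\QQ_2^{\times}$; combining these shows $[\QQ_2(\sqrt{-19}, \alpha) \colon \QQ_2] = 6$ with Galois group $S_3$, and the $\Gamma_{\QQ_2}$-action on $E(\overline{\QQ}_2)$ is again via the standard representation.

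Next I would check that this representation is absolutely irreducible. The 3-cycle has order coprime to 2, so its minimal polynomial on $E(\overline{\QQ})$ divides $(x-1)(x^2+x+1)$; it cannot be $x-1$ by faithfulness, hence it is $x^2+x+1$, whose roots in $\overline{\FF}_2$ are distinct and exchanged by any transposition. Thus no proper non-zero $\overline{\FF}_2$-subspace is $S_3$-stable, and Schur's lemma gives $\End_\QQ(E) = \End_{\QQ_2}(E) = \FF_2$.

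Finally, I would apply Proposition \ref{prop:unique_extension_of_homs} with $G = H = E$ (which is affine and flat over each relevant base) to the ring inclusions $\ZZ_2 \hookrightarrow \QQ_2$, $\ZZ[\tfrac{1}{19}] \hookrightarrow \QQ$ and $\ZZ[\tfrac{1}{38}] \hookrightarrow \QQ$. Each induces an injection on endomorphism rings into $\FF_2$, and each source contains the distinct elements $0$ and $\mathrm{id}_E$, hence equals $\FF_2$. The only genuine verification is the absolute irreducibility of the standard representation in characteristic $2$ and the local Galois-theoretic computation at $2$; no real obstacle is anticipated.
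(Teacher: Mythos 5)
Your proposal is correct and follows essentially the same route as the paper: the field cases are handled by \'etaleness and Schur's lemma for the (absolutely irreducible) standard $\FF_2[S_3]$-representation, and the remaining cases by the injectivity of base change on homomorphisms from Proposition \ref{prop:unique_extension_of_homs}. The only cosmetic difference is that the paper treats $\ZZ[\frac{1}{38}]$ directly via \'etaleness over that ring rather than injecting into $\End_\QQ(E)$, and your write-up supplies the details (absolute irreducibility, and the check that the decomposition group at $2$ is all of $S_3$) that the paper leaves implicit.
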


\begin{proof}
As both the zero and identity map belong to $\End_R(E)$ we find $\#\End_R(E) \geq 2$.
Moreover, by étaleness, for $R \in \{\QQ,\QQ_2, \ZZ[\frac{1}{38}]\}$ we see $\End_R(E) \cong \FF_2$ by considering the action of Galois.
It then follows from Proposition \ref{prop:unique_extension_of_homs} that $\End_R(E) \cong \FF_2$ also for $R\in \{\ZZ_2,\ZZ[\frac{1}{19}]\}$.
\end{proof}

\begin{proposition}
\label{prop:19_extensions_of_E_by_itself}
    $\Ext^1_{\mathcal{C}_{19}}(E,E) \cong \FF_2$ and is generated by $J_0(19)[4]$.
\end{proposition}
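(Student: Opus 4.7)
The plan splits into two halves: producing a non-trivial extension class and bounding the size of $\Ext^1_{\mathcal{C}_{19}}(E,E)$ from above by $2$.

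For the existence half, multiplication by $2$ on $J_0(19)$ gives the short exact sequence
\[0 \to X_0(19)[2] \to X_0(19)[4] \xrightarrow{[2]} X_0(19)[2] \to 0\]
of finite flat group schemes over $\ZZ[\frac{1}{19}]$ (Proposition \ref{prop:torsion_is_finite_flat}), and by Lemma \ref{lem:filtration_l_torsion} this is an exact sequence in $\mathcal{C}_{19}$. It is non-split: if it were, $X_0(19)[4]$ would equal $E \times E$, which is annihilated by $2$, whereas $X_0(19)(\overline{\QQ})$ has points of order exactly $4$.

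For the upper bound I would apply the Mayer--Vietoris sequence (Theorem \ref{thm:Mayer_Vietoris}) with $R = \ZZ[\frac{1}{19}]$ and $\ell = 2$, so that $\hat R = \ZZ_2$, $R[\tfrac{1}{2}] = \ZZ[\frac{1}{38}]$, and $\hat R[\tfrac{1}{2}] = \QQ_2$. By Lemma \ref{lemma:endos_are_Z/2Z}, each of the four Hom groups in the sequence is $\FF_2$, and the difference map $\Hom_{\ZZ_2}(E,E) \times \Hom_{\ZZ[\frac{1}{38}]}(E,E) \to \Hom_{\QQ_2}(E,E)$ is therefore surjective. The connecting map $\delta$ vanishes, yielding an injection
\[\Ext^1_{\ZZ[\frac{1}{19}]}(E,E) \hookrightarrow \Ext^1_{\ZZ_2}(E,E) \times \Ext^1_{\ZZ[\frac{1}{38}]}(E,E).\]
For the $\ZZ_2$-factor, Lemma \ref{lem:agree_over_Z2} shows $E$ is the unique prolongation of $E_{\QQ_2}$ to $\ZZ_2$, so Proposition \ref{prop:base_change_on_extension_groups} gives a further injection into $\Ext^1_{\QQ_2}(E,E) = H^1(\Gamma_{\QQ_2}, \End_{\FF_2} E(\overline{\QQ}_2))$, which one analyzes via inflation--restriction along $\QQ_2(E)/\QQ_2$ with Galois group $S_3$. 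For the $\ZZ[\frac{1}{38}]$-factor, étaleness reduces matters to $\Gamma_\QQ$-module extensions unramified outside $\{2,19\}$; classes coming from $\mathcal{C}_{19}$ must additionally satisfy Fontaine's higher ramification bound at $2$ (Corollary \ref{Cor:properties_of_annihilated_by_ell_with_Fontaine}) and the condition $(\sigma-1)^2 = 0$ on inertia at $19$, and these can be handled by a class-field-theoretic analysis of the field of points modelled on the proof of Proposition \ref{prop:19_geom_points}.

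The main obstacle is that a priori each of the two factors on the right may be strictly larger than $\FF_2$. The strategy to close the proof is to use the $\mathcal{C}_{19}$-constraints at both $2$ and $19$ together with the compatibility at $\QQ_2$ built into the Mayer--Vietoris injection, ruling out all classes except the one represented by the base changes of $X_0(19)[4]$. In particular, one must verify that any $\mathcal{C}_{19}$-extension whose $\QQ_2$-base change agrees with that of $X_0(19)[4]$ is globally equivalent to it, which is where the connected/biconnected structure of $E_{\ZZ_2}$ and Raynaud-style uniqueness results will carry the most weight.
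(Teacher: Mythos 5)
Your existence half is correct and matches the paper: $X_0(19)[4]$ is a non-split extension of $E$ by $E$ in $\mathcal{C}_{19}$ because it has points of exact order $4$, and the Mayer--Vietoris injection $\Ext^1_{\ZZ[\frac{1}{19}]}(E,E) \hookrightarrow \Ext^1_{\ZZ_2}(E,E) \times \Ext^1_{\ZZ[\frac{1}{38}]}(E,E)$ is correctly derived from Lemma \ref{lemma:endos_are_Z/2Z}. But the upper bound, which is the actual content of the proposition, is not established: you explicitly flag the obstacle (each local factor may a priori exceed $\FF_2$) and then only gesture at a strategy. The first missing idea is a reduction the paper makes before any local analysis: Schoof's exact sequence
\[0 \rightarrow \Ext^1_{\ZZ[\frac{1}{19}],[2]}(E,E) \rightarrow \Ext^1_{\ZZ[\frac{1}{19}]}(E,E) \rightarrow \End_{\QQ}(E)^{\vee}\]
shows the subgroup of classes annihilated by $2$ has index exactly $2$ (the quotient being hit by $X_0(19)[4]$), so it suffices to prove $\Ext^1_{\mathcal{C}_{19},[2]}(E,E)=0$. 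This reduction is not cosmetic. Your plan to treat the $\ZZ[\frac{1}{38}]$-factor by a class-field-theoretic analysis modelled on Proposition \ref{prop:19_geom_points} needs the extension $M$ to be killed by $2$: that is what makes $\Gal(\QQ(M)/\QQ(E))$ abelian of exponent $2$ (Lemma \ref{lemma:trivial_by_trivial_implies_order_divides_n}) and what allows Fontaine's bound (Theorem \ref{thm:Fontaine}) to be applied with $n=1$ rather than $n=2$; for $n=2$ the ramification and discriminant bounds are too weak for the argument to close.

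The second gap is that even for killed-by-$2$ classes the decisive step is a concrete local computation at $2$ which your sketch does not contain. The paper invokes Schoof's structure theory for extensions of biconnected group schemes over $\ZZ_2$ to show that for each nonzero $P \in E(\overline{\QQ}_2)$ the field $K_P$ generated by the points above $P$ is either trivial, unramified, or totally ramified over $K=\QQ_2(E)$ with every non-trivial character of conductor exactly $(\pi)^2$; a ray class group computation (the fundamental units of $F=\QQ(E)$ generate the units modulo $2$ and $F$ has class number one) forces any abelian extension of $F$ ramified only at $(\pi)$ to have conductor at least $(\pi)^4$, ruling out the ramified case. Only then does one conclude $\QQ(M)=F$, hence splitness over $\ZZ[\frac{1}{38}]$ and over $\QQ_2$ by \'etaleness, splitness over $\ZZ_2$ by Proposition \ref{prop:base_change_on_extension_groups} (using the uniqueness of the prolongation from Lemma \ref{lem:agree_over_Z2}), and splitness over $\ZZ[\frac{1}{19}]$ by Mayer--Vietoris. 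Your proposal names the right tools but stops short of the computations that actually bound the group, so as written it does not prove $\Ext^1_{\mathcal{C}_{19}}(E,E)\cong\FF_2$.
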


\begin{proof}
By \cite[Prop. A.2]{Brumer_Kramer_Certain_ab_vars_bad_at_one_prime} we have an exact sequence
\[0 \rightarrow \Ext^1_{\ZZ[\frac{1}{19}],[2]}(E,E)  \rightarrow  \Ext^1_{\ZZ[\frac{1}{19}]}(E,E) \rightarrow \End_{\QQ}(E)
\]
where  $\Ext^1_{\ZZ[\frac{1}{19}],[2]}(E,E)$ denotes the subgroup of extensions of $E$ by itself which are annihilated by 2 as group schemes.
We denote the intersection of this subgroup with $\Ext^1_{\mathcal{C}_{19}}(E,E)$ by $\Ext^1_{\mathcal{C}_{19},[2]}(E,E)$.
The above exact sequence implies $\Ext^1_{\mathcal{C}_{19},[2]}(E,E)$ has index at most 2 in $\Ext^1_{\mathcal{C}_{19}}(E,E)$.
In fact, as $\Ext^1_{\mathcal{C}_{19}}(E,E)$ contains $J_0(19)[4]$ (recall $E = J_0(19)[2]$) we see the index is exactly 2.

Let $M \in \Ext^1_{\mathcal{C}_{19},[2]}(E,E)$ and $L\coloneqq \QQ(M)$.
To complete the proof, it suffices to show $M \cong E \times E$, i.e., $M$ is split.
Recall that $F\coloneqq \QQ(E) = \QQ(\sqrt{-19}, \alpha)$ has class number one and $(\alpha)$ is the unique prime ideal above 2.
By Corollary \ref{Cor:properties_of_annihilated_by_ell_with_Fontaine}, the inertia groups above 19 in $\Gal(L/\QQ)$ have order dividing 2, so  $L/F$ is unramified outside of 2.
Using again that $M$ is annihilated by 2, we find the exponent of $\Gal(L/F)$ divides 2 by Lemma \ref{lemma:trivial_by_trivial_implies_order_divides_n}.
Thus  $\Gal(L/F)$ is  abelian.

Recall that $M_{\ZZ_2}$ is an extension of $E_{\ZZ_2}$ by itself annihilated by 2 and $E_{\ZZ_2}$ is connected with connected dual by Lemma \ref{lem:agree_over_Z2}.
Hence we may analyse $M_{\ZZ_2}$ using results from \cite[Section 6]{Schoof_cyclotomic}.

Let $K \coloneqq \QQ_2(E)$ and $P \in E(\overline{\QQ}_2)$.
The field $K_P$ generated over $\QQ_2$ by the geometric points of $M_{\ZZ_2}$ which map to  $P$ is a Galois extension of $K$ with degree dividing $4$  \cite[Prop. 6.3]{Schoof_cyclotomic}\footnote{See also the errata \url{https://www.mat.uniroma2.it/~schoof/erratacyc}}.
The extension $K_P/K$ is either trivial, unramified or totally ramified \cite[Prop. 6.4]{Schoof_cyclotomic}.
Moreover, in the totally ramified case, $\Gal(K_P/K) \cong C_2^2$ and its non-trivial characters have Artin conductor $(\sqrt[3]{-2})^2 = (\alpha)^2$.

Let us suppose for a contradiction that there exists some $P \in E(\overline{\QQ}_2)$ such that $K_P/K$ is ramified.
Then $L/F$ is non-trivial and thus $\Gal(L/F)$ has exponent 2.
In particular, $K_P$ is the completion of some subextension $F_P/F$ of $L/F$.
It follows that the Artin conductor of any non-trivial character of $\Gal(F_P/F)$ is $(\alpha)^2$ \cite[Thm. VII.11.6]{Neukirch_ANT_book}. 
This implies \cite[Thm. VII.11.10]{Neukirch_ANT_book} that the field cut out by any non-trivial character of $\Gal(F_P/F)$ has conductor $(\alpha)^2$. 

The fundamental units of $F$ generate $(\OO/2\OO)^*$ and $F$ has class number one.
It follows that the ray class group of $F$ of modulus $(2) = (\alpha)^3$ is trivial.
In particular, any abelian extension of $F$ unramified outside of $(\alpha)$ must be totally ramified at $(\alpha)$ of conductor at least $(\alpha)^4$.
This provides us with the sought after contradiction.
Thus for every $P \in E(\overline{\QQ}_2)$ the extension $K_P/K$ is either trivial or unramified.

We deduce that $L/F$ is unramified.
However, $L/F$ is an abelian extension and $F$ has class number one, so $L=F$.
It follows from étaleness that $M_{\ZZ[\frac{1}{38}]}$ and $M_{\QQ_2}$ are both split.

By Lemma \ref{lem:agree_over_Z2}, $E_{\ZZ_2}$ is the unique prolongation of $E_{\QQ_2}$ over $\ZZ_2$, thus by Proposition \ref{prop:base_change_on_extension_groups} we have $M_{\ZZ_2} \cong E_{\ZZ_2} \times E_{\ZZ_2}$.
As a consequence of Lemma \ref{lemma:endos_are_Z/2Z}, the Mayer-Vietoris sequence in Theorem \ref{thm:Mayer_Vietoris} simplifies to 
\[0 \rightarrow \Ext^1_{\ZZ[\frac{1}{19}]}(E,E)  \rightarrow \Ext^1_{\ZZ_2}(E,E) \times \Ext^1_{\ZZ[\frac{1}{38}]}(E,E) \rightarrow \Ext^1_{\QQ_2}(E,E).
\]
We thus deduce $M$ is split.
\end{proof}
\subsection{Step 4} \label{sec:Classification_2-divisible_groups}
We are now in a position to complete the classification of semistable abelian varieties over $\mathbb{Q}$ with good reduction outside $19$. For this, we show that the hypotheses of Theorem \ref{thm:Schoof_classification_p_divisible groups} are satisfied in our setting. 

\begin{theorem}
\label{thm:19_classification}
    Let $A/\QQ$ be a semistable abelian variety with good reduction outside of 19.
    Then $A$ is isogenous to $J_0(19)^g$ for some $g \geq 1$.
\end{theorem}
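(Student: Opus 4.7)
Following the four-step strategy laid out in Section \ref{Section:FS_strategy}, Steps (1)--(3) have been carried out in Sections \ref{sec:generic_fibre} and 4, so the task here is to execute Step (4). The natural entry point is Proposition \ref{prop:filtration_has_no_Z/lZ_nor_mu_l}, whose hypothesis must be verified for $\ell = 2$ and $p = 19$. By Theorem \ref{thm:simple_group_schemes_in_C_19}, the simple objects of $\mathcal{C}_{19}$ are $\ZnZconst{2}$, $\mu_2$, and $E = X_0(19)[2]$, so we need $\Ext^1_{\mathcal{C}_{19}}(V, \ZnZconst{2}) = 0$ for $V \in \{\mu_2, E\}$. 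The case $V = E$ is exactly Proposition \ref{prop:19_extensions_with_E_and_order_2_stuff}. For $V = \mu_2$, Proposition \ref{prop:extensions_mu_ell_by_Z_mod_ell_Z} gives vanishing already at the level of $\Ext^1_{\ZZ[\frac{1}{19}]}(\mu_2, \ZnZconst{2})$, since $(19^2 - 1)/24 = 15$ is odd (equivalently, $19 \not\equiv \pm 1 \pmod 8$); the subgroup $\Ext^1_{\mathcal{C}_{19}}(\mu_2, \ZnZconst{2})$ is therefore also trivial.

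With the hypothesis verified, Proposition \ref{prop:filtration_has_no_Z/lZ_nor_mu_l} applied to the Néron model $\mathcal{A}/\ZZ[\frac{1}{19}]$ of $A$ yields, for every $n \geq 1$, a filtration of $\mathcal{A}[2^n]$ by closed finite flat subgroup schemes in $\mathcal{C}_{19}$ whose simple successive quotients are neither $\ZnZconst{2}$ nor $\mu_2$. By Theorem \ref{thm:simple_group_schemes_in_C_19}, every such quotient is isomorphic to $E$. Thus the entire $2$-divisible group $\mathcal{A}[2^\infty]$ is built out of copies of a single simple block, and all glueing ambiguity is controlled by $\Ext^1_{\mathcal{C}_{19}}(E,E)$.

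To go from this filtration-theoretic statement to an isomorphism with the $2$-divisible group of $X_0(19)^g$, the plan is to invoke \cite[Thm.~8.3]{Schoof_one_bad_prime}, which classifies $2$-divisible groups over $\ZZ[\frac{1}{p}]$ assembled from a single simple block, given sufficient control of its endomorphisms and self-extensions. The three inputs required are provided in Section 4: the simplicity of $E$ (Theorem \ref{thm:simple_group_schemes_in_C_19}), $\End_{\ZZ[\frac{1}{19}]}(E) \cong \FF_2$ (Lemma \ref{lemma:endos_are_Z/2Z}), and $\Ext^1_{\mathcal{C}_{19}}(E,E) \cong \FF_2$, generated by the class of $X_0(19)[4]$ (Proposition \ref{prop:19_extensions_of_E_by_itself}). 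The conclusion of Schoof's theorem, with $g := \dim A$, is an isomorphism $\mathcal{A}[2^\infty] \cong X_0(19)[2^\infty]^g$ of $2$-divisible groups over $\ZZ[\frac{1}{19}]$.

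Passing to $\overline{\QQ}$-points and taking the inverse limit, one obtains a $\Gamma_\QQ$-equivariant isomorphism $T_2(A) \cong T_2(X_0(19)^g)$, whence tensoring with $\QQ_2$ and invoking Faltings' Isogeny Theorem gives the required isogeny $A \sim X_0(19)^g$. The main obstacle is the invocation of \cite[Thm.~8.3]{Schoof_one_bad_prime}: one must match its hypotheses precisely against the filtration produced above and against the restricted extension data provided by Propositions \ref{prop:19_extensions_with_E_and_order_2_stuff} and \ref{prop:19_extensions_of_E_by_itself}, the point being that the unique non-trivial self-extension of $E$ is realised inside $X_0(19)[4]$ so no ``exotic'' $2$-divisible groups built from $E$ can exist over $\ZZ[\frac{1}{19}]$. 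Once this input is in place, the remaining arguments are routine.
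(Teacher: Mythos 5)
Your proposal is correct and follows essentially the same route as the paper: verify the hypotheses of Proposition \ref{prop:filtration_has_no_Z/lZ_nor_mu_l} via Propositions \ref{prop:extensions_mu_ell_by_Z_mod_ell_Z} and \ref{prop:19_extensions_with_E_and_order_2_stuff}, conclude all simple quotients of $\mathcal{A}[2^n]$ are $E$, and then invoke \cite[Thm.~8.3]{Schoof_one_bad_prime} together with Faltings. The only step you leave implicit, which the paper spells out, is the precise verification of Schoof's hypothesis, namely that the connecting map $\delta\colon \Hom_{\ZZ[\frac{1}{19}]}(E,E)\to \Ext^1_{\mathcal{C}_{19}}(E,E)$ in the Hom--Ext sequence for $0\to E\to X_0(19)[4]\to E\to 0$ is an isomorphism (together with $\End(\mathcal{X}_2)=\ZZ_2$); this follows routinely from Lemma \ref{lemma:endos_are_Z/2Z} and Proposition \ref{prop:19_extensions_of_E_by_itself}, exactly as you anticipate.
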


\begin{proof}
Lemma \ref{lem:running_example_extensions_mu_2_by_Z_mod_2_Z_over_1/3} and
     Proposition  \ref{prop:19_extensions_with_E_and_order_2_stuff} allow us to apply Proposition \ref{prop:filtration_has_no_Z/lZ_nor_mu_l} in combination with Theorem \ref{thm:simple_group_schemes_in_C_19} to deduce there exists a filtration of $A[\ell^n]$:
    \[A[\ell^n] = G_0 \supseteq G_1 \supseteq \ldots \supseteq G_m = 0\]
    such that the successive quotients $G_i/G_{i+1}$ are isomorphic to $E = J_0(19)[2]$.
  
    It is easy to check the action of $\GQ$ on $T_2J_0(19)$ induces a surjective homomorphism $\GQ \rightarrow \GL_2(\ZZ_2)$.
    Thus $\End_{\GQ}(T_2J_0(19)) \cong \ZZ_2$.
    The endomorphism ring of $\mathcal{X}_2$, the $2$-divisible group associated to the Néron model of $J_0(19)$ over $\ZZ[\frac{1}{19}]$, injects into $\End_{\GQ}(T_2J_0(19))$.
    Thus $\End(\mathcal{X}_2) = \ZZ_2$. 

    By applying the functor $\Hom_{\ZZ[\frac{1}{19}]}(E,-)$ to the exact sequence $0\to E \to J_0(19)[4] \to E \to 0$, we obtain the following long exact sequence:
    \[
    0 \rightarrow \Hom_{\ZZ[\frac{1}{19}]}(E,E) \rightarrow \Hom_{\ZZ[\frac{1}{19}]}(E,J_0(19)[4])
    \rightarrow \Hom_{\ZZ[\frac{1}{19}]}(E,E)
    \xrightarrow{\delta} \Ext^1_{\mathcal{C}_{19}}(E,E).
    \]
    Note that the image of any $\varphi \in \Hom_{\ZZ[\frac{1}{19}]}(E,J_0(19)[4])$ is annihilated by 2, since $E$ is.
    In particular, its image lands in $J_0(19)[2] = E$.
    Thus the above map $\Hom_{\ZZ[\frac{1}{19}]}(E,E) \rightarrow \Hom_{\ZZ[\frac{1}{19}]}(E,J_0(19)[4])$ is an isomorphism.
    
    By Lemma \ref{lemma:endos_are_Z/2Z} and Proposition \ref{prop:19_extensions_of_E_by_itself}, we have $\Hom_{\ZZ[\frac{1}{19}]}(E,E) \cong \Ext^1_{\mathcal{C}_{19}}(E,E) \cong \FF_2$ as groups.
    It thus follows from the above exact sequence that $\delta$ is an isomorphism.
    Applying Theorem \ref{thm:Schoof_classification_p_divisible groups} in combination with Faltings' Isogeny Theorem \cite[Kor. 2 (ii)]{Faltings_1983} we deduce $A$ is isogenous to $J_0(19)^g$ for some $g \geq 1$ as required.
\end{proof}

\section{Simple objects of order 4 in $\mathcal{C}_{p}$}
\label{sec:simple_objects_of_order_4}
The simple objects in the category $\mathcal{C}_{11}$ are isomorphic to one of $\underline{\ZZ/2\ZZ}, \mu_2$ and $\mathcal{E}[2]$, the 2-torsion of the Néron model $\mathcal{E}$ of the CM elliptic curve  $y^2+y=x^3-x^2-7x+10$ over $\ZZ[\frac{1}{11}]$, see \cite[Prop. 7.1]{Schoof_one_bad_prime} and the discussion following \cite[Prop. 7.2]{Schoof_one_bad_prime}.
However, as remarked in \cite[pg. 861]{Schoof_one_bad_prime}, the group scheme $\mathcal{E}[4]$ does not belong to $\mathcal{C}_{11}$.

Thanks to the results in Section \ref{sec:19}, one may verify a similar situation occurs for the prime $19$.
Namely, the 2-torsion of the Néron model $\mathcal{E}'$ of the CM elliptic curve  $y^2+y=x^3-38x+90$ over $\ZZ[\frac{1}{19}]$ is isomorphic to $J_0(19)[2]$, but $\mathcal{E}'[4]$ does not belong to $\mathcal{C}_{19}$.
Proposition \ref{prop:PGR_2-torsion_can_give_objects_in_C_p} and Corollaries \ref{cor:objects_of_C_p_from_CM_ECs} and \ref{cor:CM_ECs_examples} show this is an example of a more general phenomenon.

\begin{proposition} 
\label{prop:PGR_2-torsion_can_give_objects_in_C_p}
    Suppose $p > 3$ is prime.
    Let $E$ be an elliptic curve over $\mathbb{Q}$ with good reduction outside of $p$ and potentially good reduction at $p$.
    Suppose the valuation of $\Delta_{\mathrm{min}}(E)$ at $p$ is odd.

    Let $\mathcal{E}\to \Spec(\ZZ[\frac{1}{p}])$ be the Néron model of $E$ over $\ZZ[\frac{1}{p}]$.
    Then $\mathcal{E}[2]$ is an object of $\mathcal{C}_{p}$, but $\mathcal{E}[4]$ is not.
\end{proposition}

\begin{proof}
    Throughout the proof $\ell$ shall denote a prime different from $p$.
    As $E$ has potentially good reduction at $p$ and the valuation of $\Delta_{\mathrm{min}}(E)$ at $p$ is odd, any fixed inertia group $I_p$ at $p$, acts on $T_\ell E$ for through a cyclic quotient of order 4 \cite[pg. 312]{Serre_proprieties_galoisiennes}.
    Furthermore, the kernel of the action of $I_p$ on $T_\ell E$ is the same for all $\ell$ and the corresponding character has values in $\ZZ$ and is independent of $\ell$ \cite[Thm. 2 (ii)]{Serre_Tate}.

    For $\ell \geq 3$, the reduction map $\GL_2(\ZZ_\ell) \rightarrow \GL_2(\FF_\ell)$ is injective on finite subgroups.
    In particular, the image of $I_p$ in $\Aut(E[\ell])$ is cyclic of order 4 for any $\ell \geq 3$.
    Suppose $\ell$ satisfies $\ell \equiv 3 \mod{4}$.
    Let $\tau$ denote a generator of the image of $I_p$ in $\Aut(E[\ell])$.
   In particular, $\tau$ has at least one eigenvalue $a$ of order 4 and, as $x^2+1$ is irreducible over $\FF_\ell$, the conjugate of $a$ is also an eigenvalue of $\tau$.
   Thus $\tau$ has trace zero.
   Since this holds for infinitely many primes, we deduce by the above that any element of order 4 in the image of $I_p$ in $\Aut(T_\ell E)$ has trace zero for any $\ell$.

   The reduction map $\GL_2(\ZZ_2) \rightarrow \GL(\ZZ/4\ZZ)$ is injective on finite subgroups, so we find the image of $I_p$ in $\Aut(E[4])$ is generated by an element of order $4$ with trace equal to zero.
   Since any element $\sigma$ of order 4 in $\GL_2(\ZZ/4\ZZ)$ satisfying $(\sigma-1)^2=0$ has trace equal to 2, we find $\mathcal{E}[4]$ does not belong to $\mathcal{C}_p$.

   The group $\GL_2(\FF_2)$ is isomorphic to $S_3$.
   In particular, the image of $I_p$ in $\Aut(E[2])$ has order dividing 2.
   Any element $\sigma$ of order dividing 2 in $\GL_2(\FF_2)$ satisfies $(\sigma-1)^2=0$.
   Thus, by Proposition \ref{prop:torsion_is_finite_flat}, $\mathcal{E}[2]$ belongs to $\mathcal{C}_p$. 
\end{proof}

\begin{corollary}
\label{cor:objects_of_C_p_from_CM_ECs}
    Let $p> 3$ be such that $\QQ(\sqrt{-p})$ has class number one.
    Then there exists an elliptic curve $E/\QQ$ with CM by $\QQ(\sqrt{-p})$ for which $\Delta_{\mathrm{min}}(E) = -p^3$.
    
    Let  $\mathcal{E}$ denote the Néron model of $E$ over $\ZZ[\frac{1}{p}]$.
    Then $\mathcal{E}[2]$ belongs to $\mathcal{C}_{p}$, but $\mathcal{E}[4]$ does not.
\end{corollary}

\begin{proof}
    The existence of such an elliptic curve is given by \cite[Thm. 12.2.1]{Gross_CM_book}.
    As any CM elliptic curve has potentially good reduction \cite[Thm II.6.1]{Silverman_II}, the result follows from Proposition \ref{prop:PGR_2-torsion_can_give_objects_in_C_p}.
\end{proof}

\begin{corollary}
\label{cor:CM_ECs_examples}
    Let $p\in \{11,19,43,67,163\}$.
    Then $\mathcal{C}_p$ contains at least three pairwise non-isomorphic simple objects.
    Namely $\underline{\ZZ/2\ZZ}, \mu_2$ and $\mathcal{E}[2]$ where $\mathcal{E}$ is as in Corollary \ref{cor:objects_of_C_p_from_CM_ECs}.
\end{corollary}

\begin{proof}
    Let $E$ be an elliptic curve as in the statement of Corollary \ref{cor:objects_of_C_p_from_CM_ECs}.
    The ray class field of $K \coloneqq\QQ(\sqrt{-p})$ of modulus $(2)$ is given by $K(E[2])$, see \cite[Thm. II.5.6]{Silverman_II}.
    As $-p \equiv 5 \mod{8}$, 2 is inert in $K$.
    Moreover, the unit group of $\mathcal{O}_K$, the ring of integers of $K$, equals $\{\pm 1 \}$.
    Recall that there is an exact sequence 
    \[\OO_K^* \rightarrow (\OO_K/2\OO_K)^* \rightarrow \mathrm{Cl}_{(2)}(K) \rightarrow\mathrm{Cl}(K) \rightarrow0\]
    where $\mathrm{Cl}_{(2)}(K)$ denotes the ray class group of modulus $(2)$ and $\mathrm{Cl}(K)$ the class group of $K$ \cite[Prop. 3.2.3]{Cohen_advanced_computational_ANT}.
    By the above, this becomes 
     \[\{\pm 1\} \rightarrow C_3 \rightarrow \mathrm{Cl}_{(2)}(K)  \rightarrow0.\]
     We deduce that $K(E[2])/K$ is a degree 3 extension totally ramified at $(2)$.
     It follows that $E[2]$ is an irreducible $\Gamma_\QQ$-module.
     Hence by Lemma \ref{lem:correspondence_submodules_subschmes}, $\mathcal{E}[2]$ is simple.

     Corollary \ref{cor:objects_of_C_p_from_CM_ECs} thus allows us to conclude $\mathcal{E}[2]$ is a simple object of $\mathcal{C}_p$.
     As $\underline{\ZZ/2\ZZ}, \mu_2$ both have order 2, and $\Gamma_\QQ$ acts trivially on their geometric points, we easily deduce that they are simple objects of $\mathcal{C}_p$.
\end{proof}

Using the techniques developed in Section \ref{sec:strategy_prolongations} we may expand upon the previous examples.
First we shall need a lemma:

\begin{lemma}
\label{lem:family_of_simple_grp_schemes_order_4_field_of_points}
    Let $p \equiv 3 \mod{8}$ be a prime greater than 3.
    Suppose the class number of $\QQ(\sqrt{-p})$ is not divisible by 3.
    Then there exists a degree three extension $F$ of $\QQ(\sqrt{-p})$ ramified exactly at 2 and Galois over $\QQ$ with $\Gal(F/\QQ) \cong S_3$.
\end{lemma}

\begin{proof}
    Let $K = \QQ(\sqrt{-p})$.
    Let $\OO= \ZZ[\frac{1+\sqrt{-p}}{2}]$ the maximal order in $\QQ(\sqrt{-p})$.
    As $p>3$, we have $\OO^* = \{\pm 1\}$.
    The prime 2 is inert in $\QQ(\sqrt{-p})$ since $-p \equiv 5 \mod{8}$.
    Recall that $\mathrm{Cl}_{2}(K)$ the ray class group of modulus 2 and the class group $\mathrm{Cl}(K)$ fit in the exact sequence \cite[Prop. 3.2.3]{Cohen_advanced_computational_ANT}
    \[\OO^* \rightarrow (\OO/2\OO)^* \rightarrow \mathrm{Cl}_{2}(K) \rightarrow \mathrm{Cl}(K) \rightarrow 1\]
    which when combined with the above shows the below sequence is exact
    \[1 \rightarrow C_3 \rightarrow \mathrm{Cl}_{2}(K) \rightarrow \mathrm{Cl}(K) \rightarrow 1.\]
    Let $F_{(2)}$ denote the ray class field of modulus 2.
    Note that $F_{(2)}$ is Galois over $\QQ$.
    By assumption the order of $\mathrm{Cl}(K)$ is coprime to 3.
    Thus by the Chinese Remainder Theorem $\mathrm{Cl}_{2}(K) \cong C_3 \times \mathrm{Cl}(K)$.
    Class Field Theory then implies $\Gal(F_{(2)}/K) \cong C_3 \times \mathrm{Cl}(K)$, which being of index two is a normal subgroup of $\Gal(F_{(2)}/\QQ)$.
    As conjugation preserves the order of a given element, we find that there is a normal subgroup of $\Gal(F_{(2)}/\QQ)$ isomorphic to $\mathrm{Cl}(K)$ contained in $\Gal(F_{(2)}/K)$.
    Let $F$ denote the fixed field of this normal subgroup.
    Then $F$ is a degree 3 extension of $K$ ramified exactly at 2.

    We are left to show that $\Gal(F/\QQ) \cong S_3$.
    Observe that if $\Gal(F/\QQ) \cong C_6$, then there exists a degree 3 abelian extension of $\QQ$ ramified only at 2.
    But by the Kronecker-Weber Theorem no such extension exists.
    Hence $\Gal(F/\QQ) \cong S_3$.
\end{proof}

\begin{proposition}
\label{prop:family_of_simple_grp_schemes_order_4_existence}
    Let $p \equiv 3 \mod{8}$ be a prime greater than 3.
    Suppose the class number of $\QQ(\sqrt{-p})$ is not divisible by 3.
    Then there exists a unique simple finite flat group scheme $G$ of order 4 over $\ZZ[\frac{1}{p}]$ satisfying $\QQ(G) = F$ where $F$ is the field from Lemma \ref{lem:family_of_simple_grp_schemes_order_4_field_of_points}.
    Moreover, $G$ belongs to $\mathcal{C}_p$.
\end{proposition}

\begin{proof}
    By Lemma \ref{lem:family_of_simple_grp_schemes_order_4_field_of_points} the extension $F/\QQ(\sqrt{-p})$ is ramified exactly at 2 and Galois over $\QQ$ with $\Gal(F/\QQ) \cong S_3$.
    As $S_3$ has a unique 2 dimensional irreducible representation over $\FF_2$, there exists a unique étale group scheme $G'$ of order 4 over $\ZZ[\frac{1}{2p}]$ annihilated by 2 satisfying $\QQ(G')=F$.
    Moreover, as any finite flat group scheme of order 4 over $\ZZ[\frac{1}{2p}]$ is étale, this is the unique group scheme of order 4 over $\ZZ[\frac{1}{2p}]$ which is annihilated by 2 with field of points being $F$.

    Let $F_2$ denote the completion of $F$ at the unique prime above 2.
    As the residue degree of $2$ in $F$ is 2 and its ramification degree equals 3, we have that $[F_2\colon \QQ_2 ]= 6$.
    Since the degree 2 subextension of $F_2$ is unramified, we find it is isomorphic to $\QQ_2(\zeta_3)$ as this is the unique unramified extension of that degree.
    Moreover, as $F_2/\QQ_2(\zeta_3)$ is totally tamely ramified, we deduce $F_2= \QQ_2(\zeta_3, \sqrt[3]{2u})$ \cite[Prop. II.7.7]{Neukirch_ANT_book}
    for some $u \in \ZZ_2[\zeta_3]^*$.
    By Hensel's Lemma, $u$ is a cube in $\QQ_2(\zeta_3)$, thus $F_2= \QQ_2(\zeta_3, \sqrt[3]{2})$.
    
    The action of $\Gamma_{\QQ_2}$ on $G'_{\QQ_2}(\overline{\QQ}_2)$ and $\mathcal{E}[2](\overline{\QQ}_2)$ where $\mathcal{E}[2]$ is as in Corollary \ref{cor:CM_ECs_examples} (for any particular value of $p$ considered there) factors through $\Gal(F_2/\QQ_2)$.
    Since $G'_{\QQ_2}$ and $\mathcal{E}[2]_{\QQ_2}$  are étale and $S_3$ has a unique 2 dimensional irreducible representation over $\FF_2$, we find $G'_{\QQ_2} \cong \mathcal{E}[2]_{\QQ_2}$.
    Thus $G'_{\QQ_2}$ has a prolongation $G''$ to $\ZZ_2$ given by $\mathcal{E}[2]_{\ZZ_2}$.
    Note moreover that as $G''$ is biconnected it is the unique prolongation of $G'_{\QQ_2}$ to $\ZZ_2$ by Proposition \ref{prop:unique_biconnected}.

    As the action of $S_3$ on its irreducible 2-dimensional module over $\FF_2$ is absolutely irreducible, we deduce that there is a unique $\FF_2[\Gal(F_2/\QQ_2)]$-automorphism of $G'(\overline{\QQ}_2)$.
    The equivalence of categories given in Proposition \ref{prop:equivalence_categories_prolongations} thus shows that there is a unique prolongation $G$ of $G'$ to $\ZZ[\frac{1}{p}]$.
\end{proof}

\printbibliography

\end{document}